\documentclass{amsart}

\usepackage{amsmath,amssymb}
\usepackage{amsthm}
\usepackage{amsrefs}
\usepackage{indentfirst}
\usepackage{mathrsfs}
\usepackage{hyperref}
\usepackage{xcolor}
\usepackage[margin=1.4in]{geometry}
\usepackage{nicefrac}
\usepackage{xifthen}
\usepackage{physics}
\usepackage{algorithm}
\usepackage{algorithmicx}
\usepackage{algpseudocode}
\usepackage{caption}
\usepackage{subcaption}
\usepackage{graphics}
\usepackage{tikz}
\usetikzlibrary{positioning}

\newcommand{\bR}{\mathbb{R}}
\newcommand{\calU}{\mathcal{U}}
\newcommand{\tu}[1]{\ifthenelse{\isempty{#1}}{\vec{\mathbf{u}}} {\mathbf{u}^{[#1]}}}
\newcommand{\tT}{\mathbf{T}}
\newcommand{\CPTR}{\mathcal{C}S_{\text{TR}}}
\newcommand{\CPTT}{\mathcal{C}S_{\text{TT}}}
\DeclareMathOperator{\rk}{rank}

\numberwithin{equation}{section}
\newtheorem{theorem}{Theorem}[section]
\newtheorem{lemma}[theorem]{Lemma}
\newtheorem{remark}[theorem]{Remark}
\newtheorem{proposition}[theorem]{Proposition}
\newtheorem{assumption}[theorem]{Assumption}
\newtheorem{definition}[theorem]{Definition}
\newtheorem{corollary}[theorem]{Corollary}
\allowdisplaybreaks[4]

\title[One-dimensional Tensor Network Recovery]{One-dimensional Tensor Network Recovery}
\author{Ziang Chen}
\address{(ZC) Department of Mathematics, Massachusetts Institute of Technology, 77 Massachusetts Avenue, Cambridge, MA 02139.}
\email{ziang@mit.edu}
\author{Jianfeng Lu}
\address{(JL) Departments of Mathematics, Physics, and Chemistry, Duke University, Box 90320, Durham, NC 27708.}
\email{jianfeng@math.duke.edu}
\author{Anru R. Zhang}
\address{(ARZ) Departments of Biostatistics \& Bioinformatics and Computer Science, Duke University, Box 2721, Durham, NC 27710.}
\email{anru.zhang@duke.edu}
\date{\today}

\thanks{The work of ZC and JL is supported in part by the National Science Foundation via awards DMS-2012286 and DMS-2309378. The work of ARZ is supported in part by the National Science Foundation via grant CAREER-2203741 and the National Institutes of Health via grant 1R01HL169347-01A1.}

\begin{document}

\begin{abstract}
We study the recovery of the underlying graphs or permutations for tensors in the tensor ring or tensor train format. Our proposed algorithms compare the matricization ranks after down-sampling, whose complexity is $O(d\log d)$ for $d$-th order tensors. We prove that our algorithms can almost surely recover the correct graph or permutation when tensor entries can be observed without noise. We further establish the robustness of our algorithms against observational noise. The theoretical results are validated by numerical experiments. 
\end{abstract}

\keywords{tensor ring, tensor train, permutation recovery, almost sure correctness, robustness again noise}

\subjclass{65C99, 15A69}

\maketitle

\section{Introduction}
Tensors, or multidimensional arrays, are powerful in encoding and computing high-order information and have a wide range of applications in natural sciences and scientific computing. Since the storage and associated computational complexity of a tensor can be exponential in its order, many tensor formats are developed to reduce the storage and computation cost, e.g., the Tucker format \cite{tucker1963implications} and tensor networks \cite{orus2014practical}.

Tensor network models contract low-order tensors with respect to an underlying graph to form high-order tensors. They are widely used in many fields, such as quantum many-body physics \cites{orus2019tensor, orus2014practical, yuan2021quantum, schollwock2011density, verstraete2006matrix, hallberg2006new} and machine learning/data science \cites{stoudenmire2016supervised, cichocki2016tensor, cichocki2017tensor, cheng2019tree, socher2013reasoning, ren2023high}. Among all tensor network structures, the one-dimensional network attracts the most attention due to its practical and efficient numerical performance. Such structure represents high-order tensors using a group of $3$-tensors ordered as a loop or a path, and is known as matrix product states (MPS) \cites{schollwock2011density, orus2014practical} in physics, and tensor ring (TR) \cite{zhao2016tensor} as well as the tensor train (TT) \cite{oseledets2011tensor} in mathematics and numerical analysis, and thus we adopt the name convention in this paper. The other tensor network formats include tree tensor networks \cite{shi2006classical}, projected entangled pair states (PEPS) \cites{orus2014practical, verstraete2004renormalization}, etc. 

Tensor ring/train decomposition \cites{zhao2016tensor, oseledets2011tensor} is the problem to compute the TR/TT representation of given a high-order tensor and has many applications, such as tensor completion \cites{yuan2019tensor, yuan2018higher, bengua2017efficient, wang2017efficient}, tensor SVD \cite{zhou2022optimal}, and complexity reduction of neural network layers \cite{wang2018wide}. This direction has been well investigated from both algorithmic and theoretical aspects; we refer the interested readers to \cites{chen2020tensor, holtz2012alternating, lubich2013dynamical, khoo2021efficient, chen2022learning}.

The performance of tensor network decomposition tasks relies heavily on the choice of the graph. One gets better performance if one employs the right underlying graph for a target tensor in some tensor network format. Thus in fields like machine learning, scientific computing, and theoretical chemistry, several works are devoted to constructing or discovering the correct underlying graph for tensor networks. An agglomerative approach is proposed in \cite{ballani2014tree} to find the suitable tree structure for hierarchical tensor formats. In \cite{li2020evolutionary}, the authors view the underlying graph as binary strings and do searching in the Hamming space. Since then there have been other works on searching the graph structure or topology for tensor networks, see e.g. \cites{hashemizadeh2020adaptive, nie2021adaptive, liu2023adaptively}. More related to our setting, \cite{li2022permutation} studies the tensor network permutation search, i.e., the problem of finding the best one-to-one mapping from the tensor indices to the vertices of a given graph. The algorithm in \cite{li2022permutation} uses local sampling to iteratively minimize some loss functions whose minimizer corresponds to the best permutation in some sense. The result is further improved in \cite{li2023alternating} with less computational cost. In quantum chemistry and physics literature, one approach proposed to find better index ordering or graph topology is to minimize the entanglement, see e.g., \cites{murg2015tree, szalay2015tensor,hikihara2023automatic} and references therein. \cite{dupuy2021inversion} proposes another strategy using the inversion symmetry property of singular values.

In this work, we consider the recovery of the underlying graph for TR and TT format. More explicitly, for a given high-order tensor that is assumed to be of the TR or TT format, we aim to design efficient and reliable algorithms to recover the underlying graph, i.e., the permutation that maps the indices of the given tensor to correct ordering in the underlying loop or path.

We propose polynomial-time complexity algorithms with theoretical guarantees to solve the tensor indices permutation problem for the TR/TT format. To the best of our knowledge, this is the first approach with complete rigorous analysis including clear complexity bounds, almost sure correctness in the noiseless case, and robustness theorems against the observation error.

The rest of this paper will be organized as follows. In Section~\ref{sec:problem}, after introducing tensor notations and tensor train/train format, we define the task of recovering the underlying graph. The algorithms will be described in Section~\ref{sec:alg} and analyzed rigorously in Section~\ref{sec:theory}. Section~\ref{sec:numerics} contains some numerical experiments and the whole paper will be concluded and discussed in Section~\ref{sec:conclude}.

\section{Problem Statement}
\label{sec:problem}
In this section, we first recall the definitions and notations of two one-dimensional tensor networks: tensor ring and tensor train. Then we state the one-dimensional tensor network recovery problem and our goals. 
For the rest of the paper, $S_d$ is the permutation group on $\{1,2,\dots,d\}$; for a matrix $M$ and a positive integer $R$, let $\sigma_R(M)$ be the $R$-th largest singular value of $M$.

\subsection{Tensors} In this paper, tensors are referred to as multi-dimensional arrays. We focus on the real-valued tensors, while the developed results can be directly extended to the complex value cases. Specifically, a $d$-tensor, or a tensor with order $d$, is some $\mathbf{X}\in\bR^{n_1\times n_2\times\cdots\times n_d}$ with entries being $\mathbf{X}(x_1,x_2,\dots,x_d)\in\bR$, $1\leq x_i\leq n_i$, $1\leq i\leq d$. We call $\vec{n}=(n_1,n_2,\dots,n_d)$ the physical or external dimension of the tensor $\mathbf{X}$. Examples of tensors include vectors ($1$-tensors) and matrices ($2$-tensors). We introduce two operations on tensors below.

\emph{Tensor product}: Let $\mathbf{X}\in \bR^{n_1\times n_2\times\cdots\times n_d}$ be a $d$-tensor and let $\mathbf{Y}\in \bR^{m_1\times m_2\times\cdots\times m_{d'}}$ be a $d'$-tensor. The tensor product of $\mathbf{X}$ and $\mathbf{Y}$, denoted as $\mathbf{X}\otimes \mathbf{Y}$, is a $(d+d')$-tensor in $ \bR^{n_1\times n_2\times\cdots\times n_d\times m_1\times m_2\times \cdots \times m_{d'}}$ defined as
\begin{equation*}
    \mathbf{X}\otimes \mathbf{Y}(x_1,x_2\dots,x_d,y_1,y_2,\dots,y_{d'}) = \mathbf{X}(x_1,x_2,\dots,x_d) \mathbf{Y}(y_1,y_2,\dots,y_{d'})
\end{equation*}
for $1\leq x_i\leq n_i$, $1\leq y_{i'}\leq m_j$, $1\leq i\leq d$, and $1\leq i'\leq d'$.

\emph{Contraction}: Let $\mathbf{X}\in \bR^{n_1\times n_2\times\cdots\times n_d}$ be a $d$-tensor and let $\mathbf{Y}\in \bR^{m_1\times m_2\times\cdots\times m_{d'}}$ be a $d'$-tensor. Suppose that $n_i = m_{i'}$ for some $i\in \{1,2,\dots,d\}$ and $i'\in \{1,2,\dots,d'\}$. Then $\mathbf{X}$ and $\mathbf{Y}$ can be contracted on the two indices with the same dimension and the resulting tensor $\mathbf{Z}\in \bR^{n_1\times \cdots \times n_{i-1} \times n_{i+1}\times \cdots \times n_d \times m_1\times \cdots \times m_{i'-1} \times m_{i'+1}\times \cdots \times m_{d'}}$ is a $(d+d'-2)$-tensor defined via: 
\begin{multline*}
    \mathbf{Z}(x_1,\dots, x_{i-1}, x_{i+1},\dots, x_d, y_1,\dots, y_{i'-1}, y_{i'+1},\dots, y_{d'}) \\
    = \sum_{z = 1}^{n_i} \mathbf{X}(x_1,\dots, x_{i-1}, z, x_{i+1},\dots, x_d) \mathbf{Y}(y_1,\dots, y_{i'-1}, z, y_{i'+1},\dots, y_{d'}).
\end{multline*}
The product of two matrices is a specific example of tensor contraction. Contraction can also be defined on a single tensor $\mathbf{X}\in \bR^{n_1\times n_2\times\cdots\times n_d}$ as long as $n_i = n_{i'}$ for some $1\leq i<i'\leq d$, which results $\mathbf{Z}\in \bR^{n_1\times \cdots \times n_{i-1} \times n_{i+1}\times \cdots \times n_{i'-1} \times n_{i'+1}\times \cdots \times n_d}$ with 
\begin{multline*}
    \mathbf{Z}(x_1,\dots, x_{i-1}, x_{i+1},\dots, x_{i'-1}, x_{i'+1}, \dots, x_d) \\
    = \sum_{z = 1}^{n_i} \mathbf{X}(x_1,\dots, x_{i-1}, x_{i+1},\dots, x_{i'-1}, x_{i'+1}, \dots, x_d) .
\end{multline*}
In particular, trace $\text{tr}(\mathbf{X})$ of a square matrix $\mathbf{X}\in\bR^{n\times n}$ is a contraction. 

\subsection{One-dimensional tensor networks} The tensor network model renders a powerful method to parameterize a high-order tensor by contraction operation on a collection of low-order tensors. The readers are referred to \cite{orus2014practical} for discussions on general tensor network models. In this work, we focus on two prominent one-dimensional tensor network models: tensor ring and tensor train.  

\subsubsection{Tensor ring structure} Let $d\in\mathbb{N}_+$ be a fixed order. For the convenience of presentation, all index symbols on the tensor ring are in a sense of mod $d$, e.g., $\tu{d+1}:=\tu{1}$, $r_{d+1} = r_1$, $k_{d+1} = k_1$. A tensor ring (TR) consists of $d$ $3$-tensors connected through contraction, as in Figure~\ref{fig:TR}, 
\begin{figure}[htb!]
	\centering
	\begin{tikzpicture}[
		roundnode/.style={circle, draw = green},
		]
		
		\draw (0,2) node[roundnode] (u1) {$\tu{1}$};
		\draw (2,0) node[roundnode] (u3) {$\tu{3}$};
		\draw (0,-2) node[roundnode] (u5) {$\tu{5}$};
		\draw (-2,0) node[roundnode] (u7) {$\tu{7}$};
		\draw (2/2^0.5,2/2^0.5) node[roundnode] (u2) {$\tu{2}$};
		\draw (2/2^0.5,-2/2^0.5) node[roundnode] (u4) {$\tu{4}$};
		\draw (-2/2^0.5,-2/2^0.5) node[roundnode] (u6) {$\tu{6}$};
		\draw (-2/2^0.5,2/2^0.5) node[roundnode] (u8) {$\tu{8}$};
		
		\draw[blue,-] (u1.east) -- (u2.north west);
		\draw[blue,-] (u2.south east) -- (u3.north);
		\draw[blue,-] (u3.south) -- (u4.north east);
		\draw[blue,-] (u4.south west) -- (u5.east);
		\draw[blue,-] (u5.west) -- (u6.south east);
		\draw[blue,-] (u6.north west) -- (u7.south);
		\draw[blue,-] (u7.north) -- (u8.south west);
		\draw[blue,-] (u8.north east) -- (u1.west);
		\draw[-] (u1.north) -- (0,3);
		\draw[-] (u3.east) -- (3,0);
		\draw[-] (u5.south) -- (0,-3);
		\draw[-] (u7.west) -- (-3,0);
		\draw[-] (u2.north east) -- (3/2^0.5,3/2^0.5);
		\draw[-] (u4.south east) -- (3/2^0.5,-3/2^0.5);
		\draw[-] (u6.south west) -- (-3/2^0.5,-3/2^0.5);
		\draw[-] (u8.north west) -- (-3/2^0.5,3/2^0.5);
		
	\end{tikzpicture}
	\caption{Tensor ring format}
	\label{fig:TR}
\end{figure}
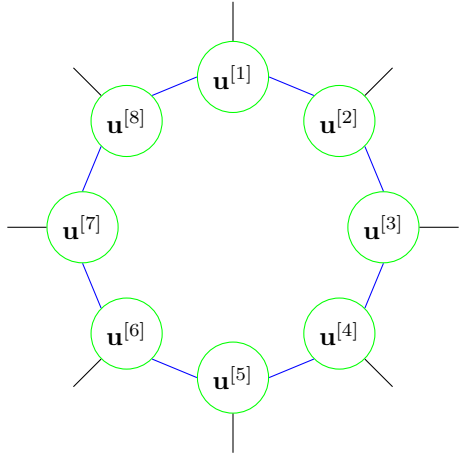
where each node has three edges and represents a $3$-tensor. For any $i\in\{1,2,\dots,d\}$, $\tu{i}$ shares a common edge with $\tu{i+1}$, meaning that a contraction is applied on $\tu{i}$ and $\tu{i+1}$. To define rigorously, given the external/physical dimension $\vec{n}=(n_1,n_2,\dots,n_d)$ and internal/bond dimension $\vec{r}=(r_1,r_2,\dots,r_d)$, any collection of $3$-tensors,
\begin{equation*}
\tu{} = \left(\tu{1},\tu{2}, \dots, \tu{d}\right)\in \calU_{\vec{r},\vec{n}}^d = \bigoplus_{i=1}^d \bR^{{r_i}
	\times n_i \times r_{i+1}},
\end{equation*}
yields a $d$-tensor in TR format:
\begin{equation}\label{eq:TR_entry}
\varphi(\tu{})(x_1,x_2,\dots,x_d)=\tr(\tu{1}(x_1)\tu{2}(x_2)\cdots \tu{d}(x_d)), \quad 1\leq x_i\leq n_i, 1\leq i\leq d,
\end{equation}
where $\tu{i}(x_i):=\tu{i}(:,x_i,:)\in\bR^{r_i \times r_{i+1}}$. Recall that matrix multiplication and trace are both tensor contractions, and \eqref{eq:TR_entry} is well-defined since the third dimension of $\tu{i}$ coincides with the first dimension of $\tu{i+1}$, both equal to $r_{i+1}$, for $i=1,2,\dots,d$. An equivalent way to describe $\varphi(\tu{})$ is via tensor product:
\begin{equation}\label{eq:TR_tensor_prod}
\varphi(\tu{}) = \sum_{1\leq k_i\leq r_i,\ 1\leq i\leq d} \tu{1}_{k_1,k_2}\otimes \tu{2}_{k_2,k_3}\otimes \cdots\otimes \tu{d}_{k_d,k_1},
\end{equation}
where $\tu{i}_{k_i,k_{i+1}} = \tu{i}(k_i,:,k_{i+1})\in \bR^{n_i}$, $1\leq k_i\leq r_i$, $1\leq i\leq d$.

\subsubsection{Tensor train structure} The tensor train (TT) format is a special case of TR format, where $r_1=1$ is fixed and hence the contraction between $\tu{d}$ and $\tu{1}$ is trivial. The TT model is shown in Figure~\ref{fig:TT},
\begin{figure}[htb!]
	\centering
	\begin{tikzpicture}[
	roundnode/.style={circle, draw = green},
	]
	
	\draw (-5.6,0) node[roundnode] (u5) {$\tu{1}$};
	\draw (-4,0) node[roundnode] (u6) {$\tu{2}$};
	\draw (-2.4,0) node[roundnode] (u7) {$\tu{3}$};
	\draw (-0.8,0) node[roundnode] (u8) {$\tu{4}$};
	\draw (0.8,0) node[roundnode] (u1) {$\tu{5}$};
	\draw (2.4,0) node[roundnode] (u2) {$\tu{6}$};
	\draw (4,0) node[roundnode] (u3) {$\tu{7}$};
	\draw (5.6,0) node[roundnode] (u4) {$\tu{8}$};
	
	\draw[blue,-] (u5.east) -- (u6.west);
	\draw[blue,-] (u6.east) -- (u7.west);
	\draw[blue,-] (u7.east) -- (u8.west);
	\draw[blue,-] (u8.east) -- (u1.west);
	\draw[blue,-] (u1.east) -- (u2.west);
	\draw[blue,-] (u2.east) -- (u3.west);
	\draw[blue,-] (u3.east) -- (u4.west);
	\draw[-] (u5.south) -- (-5.6,-1);
	\draw[-] (u6.south) -- (-4,-1);
	\draw[-] (u7.south) -- (-2.4,-1);
	\draw[-] (u8.south) -- (-0.8,-1);
	\draw[-] (u1.south) -- (0.8,-1);
	\draw[-] (u2.south) -- (2.4,-1);
	\draw[-] (u3.south) -- (4,-1);
	\draw[-] (u4.south) -- (5.6,-1);
	
	\end{tikzpicture}
	\caption{Tensor train format}
	\label{fig:TT}
\end{figure}
where there is no edge connecting $\tu{d}$ and $\tu{1}$ due to the trivial contraction. The expression of entries of $\varphi(\tu{})$ \eqref{eq:TR_entry} becomes
\begin{equation*}
\varphi(\tu{})(x_1,x_2,\dots,x_d)=\tu{1}(x_1)\tu{2}(x_2)\cdots \tu{d}(x_d).
\end{equation*}
If we denote $\tu{1}_{k_2} = \tu{1}(1,:,k_2)\in\bR^{n_1}$ for $1\leq k_2\leq r_2$ and $\tu{d}_{k_d} = \tu{d}(k_d,;,1)\in \bR^{n_d}$ for $1\leq k_d\leq r_d$, then \eqref{eq:TR_tensor_prod} can be rewritten as
\begin{equation*}
\varphi(\tu{}) = \sum_{1\leq k_j\leq r_j,\ 2\leq j\leq d} \tu{1}_{k_2}\otimes \tu{2}_{k_2,k_3}\otimes \cdots\otimes \tu{2}_{k_{d-1},k_d}\otimes \tu{d}_{k_d}.
\end{equation*}

\subsection{Recovery of the underlying graph}
In Figures~\ref{fig:TR} and ~\ref{fig:TT}, the construction of TR and TT format requires the prior knowledge of what pairs of $3$-tensors are contracted, or in other words, the underlying graph of the tensor network structure (see the blue parts in Figures~\ref{fig:TR} and \ref{fig:TT}). Note that for TR format, the graph is simply a loop/ring (hence named the tensor ring) with $d$ vertices and $d$ edges, where each vertex corresponds to a $3$-tensor and is associated with two edges. For TT format, the graph is a path of length $d-1$ that visits all $d$ vertices.

However, such prior knowledge is often inaccessible. In this work, we assume that the underlying graph is unknown. Since the graph for TR or TT format is just a loop or a path, all information carried by the graph is the ordering of the vertices. This can be specified as a permutation $\tau\in S_d$ which then fixes the neighbors (vertex with distance $1$) of each vertex. For example, one can choose $\tau=\bigl(\begin{smallmatrix}
1 & 2 & 3 & 4 & 5 & 6 & 7 & 8 \\ 1 & 4 & 2 & 6 & 7 & 5 & 3 & 8\end{smallmatrix} \bigr)$ for the TR format in Figure~\ref{fig:TR_perm} 
\begin{figure}[htb!]
	\centering
	\begin{subfigure}[b]{0.48\textwidth}
		\centering
		\begin{tikzpicture}[
		roundnode/.style={circle, draw = green},
		]
		
		\draw (0,2) node[roundnode] (u1) {$\tu{1}$};
		\draw (2,0) node[roundnode] (u3) {$\tu{3}$};
		\draw (0,-2) node[roundnode] (u5) {$\tu{5}$};
		\draw (-2,0) node[roundnode] (u7) {$\tu{7}$};
		\draw (2/2^0.5,2/2^0.5) node[roundnode] (u2) {$\tu{2}$};
		\draw (2/2^0.5,-2/2^0.5) node[roundnode] (u4) {$\tu{4}$};
		\draw (-2/2^0.5,-2/2^0.5) node[roundnode] (u6) {$\tu{6}$};
		\draw (-2/2^0.5,2/2^0.5) node[roundnode] (u8) {$\tu{8}$};
		
		\draw[blue,-] (u1.south) -- (u4.north west);
		\draw[blue,-] (u4.north) -- (u2.south);
		\draw[blue,-] (u2.south west) -- (u6.north east);
		\draw[blue,-] (u7.south east) -- (u5.north west);
		\draw[blue,-] (u5.north east) -- (u3.south west);
		\draw[blue,-] (u3.north west) -- (u8.south east);
		\draw[blue,-] (u8.north east) -- (u1.west);
		\draw[blue,-] (u7.south) -- (u6.north west);
		\draw[-] (u1.north) -- (0,3);
		\draw[-] (u3.east) -- (3,0);
		\draw[-] (u5.south) -- (0,-3);
		\draw[-] (u7.west) -- (-3,0);
		\draw[-] (u2.north east) -- (3/2^0.5,3/2^0.5);
		\draw[-] (u4.south east) -- (3/2^0.5,-3/2^0.5);
		\draw[-] (u6.south west) -- (-3/2^0.5,-3/2^0.5);
		\draw[-] (u8.north west) -- (-3/2^0.5,3/2^0.5);
		
		\end{tikzpicture}
		\caption{Nodes ordered by physical indices}
	\end{subfigure}
	\hfill
	\begin{subfigure}[b]{0.48\textwidth}
		\centering
		\begin{tikzpicture}[
		roundnode/.style={circle, draw = green},
		]
		
		\draw (0,2) node[roundnode] (u1) {$\tu{1}$};
		\draw (2,0) node[roundnode] (u3) {$\tu{2}$};
		\draw (0,-2) node[roundnode] (u5) {$\tu{7}$};
		\draw (-2,0) node[roundnode] (u7) {$\tu{3}$};
		\draw (2/2^0.5,2/2^0.5) node[roundnode] (u2) {$\tu{4}$};
		\draw (2/2^0.5,-2/2^0.5) node[roundnode] (u4) {$\tu{6}$};
		\draw (-2/2^0.5,-2/2^0.5) node[roundnode] (u6) {$\tu{5}$};
		\draw (-2/2^0.5,2/2^0.5) node[roundnode] (u8) {$\tu{8}$};
		
		\draw[blue,-] (u1.east) -- (u2.north west);
		\draw[blue,-] (u2.south east) -- (u3.north);
		\draw[blue,-] (u3.south) -- (u4.north east);
		\draw[blue,-] (u4.south west) -- (u5.east);
		\draw[blue,-] (u5.west) -- (u6.south east);
		\draw[blue,-] (u6.north west) -- (u7.south);
		\draw[blue,-] (u7.north) -- (u8.south west);
		\draw[blue,-] (u8.north east) -- (u1.west);
		\draw[-] (u1.north) -- (0,3);
		\draw[-] (u3.east) -- (3,0);
		\draw[-] (u5.south) -- (0,-3);
		\draw[-] (u7.west) -- (-3,0);
		\draw[-] (u2.north east) -- (3/2^0.5,3/2^0.5);
		\draw[-] (u4.south east) -- (3/2^0.5,-3/2^0.5);
		\draw[-] (u6.south west) -- (-3/2^0.5,-3/2^0.5);
		\draw[-] (u8.north west) -- (-3/2^0.5,3/2^0.5);
		
		\end{tikzpicture}
		\caption{Nodes ordered by underlying permutation}
	\end{subfigure}
	\caption{Tensor ring format with permutation}
	\label{fig:TR_perm}
\end{figure}
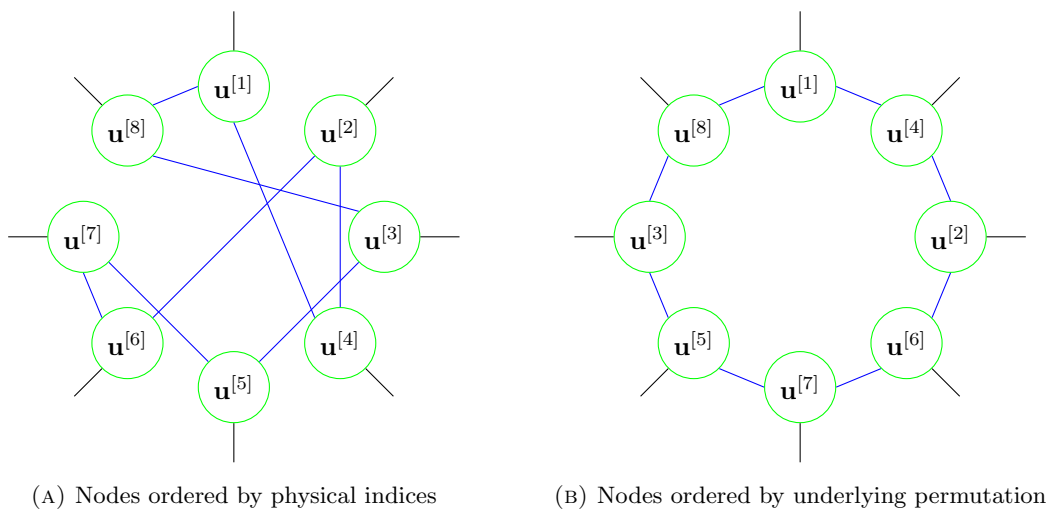
and $\tau = \bigl(\begin{smallmatrix}1 & 2 & 3 & 4 & 5 & 6 & 7 & 8 \\  7 & 5 & 3 & 8 & 1 & 4 & 2 & 6\end{smallmatrix}\bigr)$ for the TT format in Figure~\ref{fig:TT_perm}.
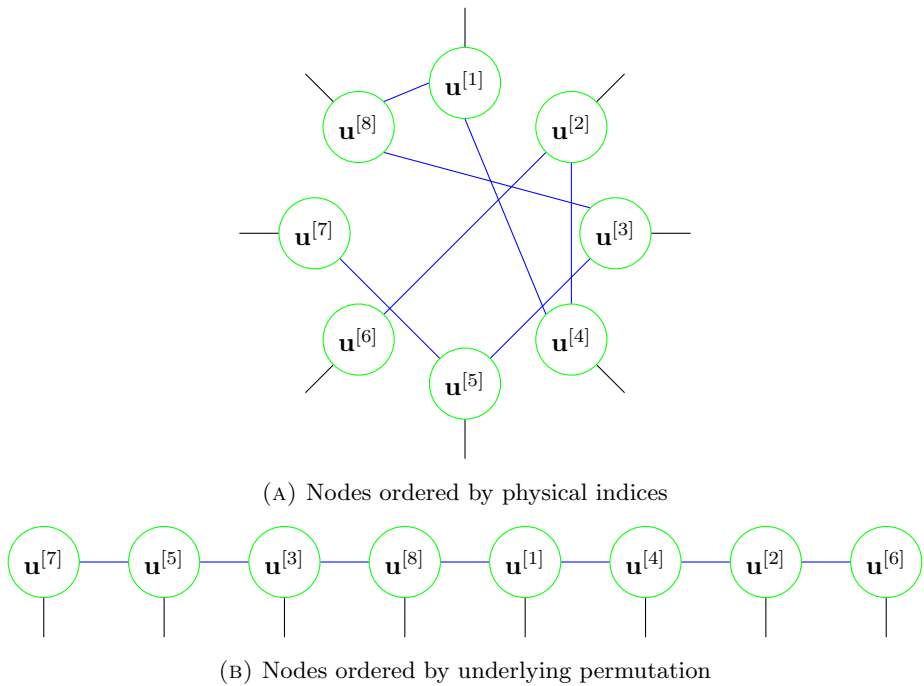
\begin{figure}[htb!]
	\centering
	\begin{subfigure}[b]{0.48\textwidth}
		\centering
		\begin{tikzpicture}[
		roundnode/.style={circle, draw = green},
		]
		
		\draw (0,2) node[roundnode] (u1) {$\tu{1}$};
		\draw (2,0) node[roundnode] (u3) {$\tu{3}$};
		\draw (0,-2) node[roundnode] (u5) {$\tu{5}$};
		\draw (-2,0) node[roundnode] (u7) {$\tu{7}$};
		\draw (2/2^0.5,2/2^0.5) node[roundnode] (u2) {$\tu{2}$};
		\draw (2/2^0.5,-2/2^0.5) node[roundnode] (u4) {$\tu{4}$};
		\draw (-2/2^0.5,-2/2^0.5) node[roundnode] (u6) {$\tu{6}$};
		\draw (-2/2^0.5,2/2^0.5) node[roundnode] (u8) {$\tu{8}$};
		
		\draw[blue,-] (u1.south) -- (u4.north west);
		\draw[blue,-] (u4.north) -- (u2.south);
		\draw[blue,-] (u2.south west) -- (u6.north east);
		\draw[blue,-] (u7.south east) -- (u5.north west);
		\draw[blue,-] (u5.north east) -- (u3.south west);
		\draw[blue,-] (u3.north west) -- (u8.south east);
		\draw[blue,-] (u8.north east) -- (u1.west);
		\draw[-] (u1.north) -- (0,3);
		\draw[-] (u3.east) -- (3,0);
		\draw[-] (u5.south) -- (0,-3);
		\draw[-] (u7.west) -- (-3,0);
		\draw[-] (u2.north east) -- (3/2^0.5,3/2^0.5);
		\draw[-] (u4.south east) -- (3/2^0.5,-3/2^0.5);
		\draw[-] (u6.south west) -- (-3/2^0.5,-3/2^0.5);
		\draw[-] (u8.north west) -- (-3/2^0.5,3/2^0.5);
		
		\end{tikzpicture}
		\caption{Nodes ordered by physical indices}
		\medskip 

	\end{subfigure}
	\begin{subfigure}[b]{\textwidth}
		\centering
		\begin{tikzpicture}[
		roundnode/.style={circle, draw = green},
		]
		
		\draw (-5.6,0) node[roundnode] (u5) {$\tu{7}$};
		\draw (-4,0) node[roundnode] (u6) {$\tu{5}$};
		\draw (-2.4,0) node[roundnode] (u7) {$\tu{3}$};
		\draw (-0.8,0) node[roundnode] (u8) {$\tu{8}$};
		\draw (0.8,0) node[roundnode] (u1) {$\tu{1}$};
		\draw (2.4,0) node[roundnode] (u2) {$\tu{4}$};
		\draw (4,0) node[roundnode] (u3) {$\tu{2}$};
		\draw (5.6,0) node[roundnode] (u4) {$\tu{6}$};
		
		\draw[blue,-] (u5.east) -- (u6.west);
		\draw[blue,-] (u6.east) -- (u7.west);
		\draw[blue,-] (u7.east) -- (u8.west);
		\draw[blue,-] (u8.east) -- (u1.west);
		\draw[blue,-] (u1.east) -- (u2.west);
		\draw[blue,-] (u2.east) -- (u3.west);
		\draw[blue,-] (u3.east) -- (u4.west);
		\draw[-] (u5.south) -- (-5.6,-1);
		\draw[-] (u6.south) -- (-4,-1);
		\draw[-] (u7.south) -- (-2.4,-1);
		\draw[-] (u8.south) -- (-0.8,-1);
		\draw[-] (u1.south) -- (0.8,-1);
		\draw[-] (u2.south) -- (2.4,-1);
		\draw[-] (u3.south) -- (4,-1);
		\draw[-] (u4.south) -- (5.6,-1);
		
		\end{tikzpicture}
		\caption{Nodes ordered by underlying permutation}
	\end{subfigure}
	\caption{Tensor train format}
	\label{fig:TT_perm}
\end{figure}
We introduce the rigorous notations for TR and TT format with permutation as follows.

\subsubsection{Tensor ring structure with permutation} Given permutation $\tau\in S_d$, external dimension $\vec{n}=(n_1,n_2,\dots,n_d)$, and internal dimension $\vec{r}=(r_1,r_2,\dots,r_d)$, denote
\begin{equation*}
    \calU_{\vec{r},\vec{n},\tau}^d = \bigoplus_{i=1}^d \bR^{r_{\tau^{-1}(i)}
    \times n_i \times r_{\tau^{-1}(i)+1}}.
\end{equation*}
Note that the indices/inputs for $\tau$, $r$, etc, are in the sense of $\text{mod }d$, e.g., $\tau(d+1) = \tau(1), r_{d+1} = r_1$. An element in $\calU_{\vec{r},\vec{n},\tau}^d$ can be written as a collection of $3$-tensors:
\begin{equation*}
    \tu{} = \left(\tu{1},\tu{2}, \dots, \tu{d}\right)\in \calU_{\vec{r},\vec{n},\tau}^d,\quad \tu{i}\in \bR^{r_{\tau^{-1}(i)}
    \times n_i \times r_{\tau^{-1}(i)+1}},\ 1\leq i\leq d.
\end{equation*}
Note that the third dimension of $\tu{\tau(j)}$ coincides with the first dimension of $\tu{\tau(j+1)}$ and they both equal to $r_{i+1}$. This corresponds to the fact that in the graph, there is an edge connecting the vertices representing $\tu{\tau(j)}$ and $\tu{\tau(j+1)}$ that can be contracted. 

Denote 
 \begin{equation*}
     \calU_{\vec{r},\vec{n}}^d= \bigcup_{\tau\in S_d} \calU_{\vec{r},\vec{n},\tau}^d\times \{\tau\}
\end{equation*}
that consists of all consistent groups of $3$-tensors as well as the permutation. The map that constructs a $d$-tensor with TR format is given by
\begin{equation*}
    \begin{split}
         \varphi:\ \calU_{\vec{r},\vec{n}}^d\ &\rightarrow \bR^{n_1\times n_2\times \cdots\times n_d}\\
        (\tu{}, \tau) &\mapsto\quad\ \varphi(\tu{}, \tau)
     \end{split}
 \end{equation*}
with entries of $\varphi(\tu{},\tau)$ being
\begin{equation}\label{eq:TR_entry_perm}
    \varphi(\tu{},\tau)(x_1,x_2,\dots,x_d)=\tr(\tu{\tau(1)}(x_{\tau(1)})\tu{\tau(2)}(x_{\tau(2)})\cdots \tu{\tau(d)}(x_{\tau(d)})),
 \end{equation}
 for $1\leq x_i\leq n_i$, $1\leq i\leq d$, where $\tu{i}(x_i):=\tu{i}(:,x_i,:)\in\bR^{r_{\tau^{-1}(i)}\times r_{\tau^{-1}(i)+1}}$. One can also define $\varphi(\tu{},\tau)$ using tensor product:
\begin{equation}\label{eq:TR_tensor_prod_perm}
     \varphi(\tu{},\tau) = \sum_{1\leq k_j\leq r_j,\ 1\leq j\leq d} \tu{\tau(1)}_{k_1,k_2}\otimes \tu{\tau(2)}_{k_2,k_3}\otimes \cdots\otimes \tu{\tau(d)}_{k_d,k_1},
\end{equation}
where $\tu{\tau(j)}_{k_j,k_{j+1}} = \tu{\tau(j)}(k_j,:,k_{j+1})\in \bR^{n_{\tau(j)}}$, $1\leq k_j\leq r_j$, $1\leq j\leq d$, and $k_{d+1}:= k_1$. 
 
One observation is that different permutations may result in the same graph, i.e., the same pairs of adjacent $3$-tensors. More explicitly, let $\alpha,\beta\in S_d$ be defined via $\alpha(j) = j+1$ and $\beta(j) = d+1-j$ for $j=1,2,\dots,d$. Then for any $\tau,\tau'\in S_d$, their underlying graphs are the same if and only if there exists $k\in\{0,1,\dots,d-1\}$ and $\ell\in{0,1}$ such that $\tau' = \tau \circ \alpha^k\circ \beta^\ell$. In other words, one divides $S_d$ into several equivalence classes, and permutations in the same class lead to the same graph. The equivalence class containing $\tau\in S_d$ is 
 \begin{equation}\label{equiv_class_TR}
 \CPTR^d(\tau) = \left\{\tau \circ \alpha^k\circ \beta^\ell : k\in\{0,1,\dots,d-1\},\ \ell\in{0,1}\right\}.
 \end{equation}
 
 \subsubsection{Tensor train structure with permutation} The TT format can still be obtained from the TR format by fixing $r_1=1$, which modified \eqref{eq:TR_entry_perm} into
 \begin{equation*}
     \varphi(\tu{},\tau)(x_1,x_2,\dots,x_d)=\tu{\tau(1)}(x_{\tau(1)})\tu{\tau(2)}(x_{\tau(2)})\cdots \tu{\tau(d)}(x_{\tau(d)}).
 \end{equation*}
 Rewrite \eqref{eq:TR_tensor_prod_perm} as
 \begin{equation*}
     \varphi(\tu{},\tau) = \sum_{1\leq k_j\leq r_j,\ 2\leq j\leq d} \tu{\tau(1)}_{k_2}\otimes \tu{\tau(2)}_{k_2,k_3}\otimes \cdots\otimes \tu{\tau(d-1)}_{k_{d-1},k_d}\otimes\tu{\tau(d)}_{k_d},
 \end{equation*}
where $\tu{\tau(1)}_{k_2} = \tu{\tau(1)}(1,:,k_2)\in\bR^{n_{\tau(1)}}$ for $1\leq k_2\leq r_2$ and $\tu{\tau(d)}_{k_d} = \tu{\tau(d)}(k_d,;,1)\in \bR^{n_{\tau(d)}}$ for $1\leq k_d\leq r_d$.

Similar to the tensor ring case, the permutation corresponding to a graph or path for TT format is not unique. The difference is that the equivalence class containing $\tau$ is
 \begin{equation}\label{equiv_class_TT}
 \CPTT^d(\tau) = \{\tau,\tau\circ\beta\}.
 \end{equation}

\subsubsection{Tasks and goals}
In this work, we study how to infer the unknown underlying graph for TR and TT structures via observations of entries of the whole tensor $\tT$. More specifically, we would like to find all pairs of $3$-tensors for which the associated vertices are adjacent (connected via an edge), that are $(\tu{\tau(j)},\tu{\tau(j+1)})$, $j=1,2,\dots,d$ for TR format, and are $(\tu{\tau(j)},\tu{\tau(j+1)})$, $j=1,2,\dots,d-1$ for TT format.
For simplicity, we say two $3$-tensors are adjacent if their associated vertices are adjacent. Our goal is to design algorithms that are efficient both in sample complexity (number of observed tensor entries) and computational complexity. To sum up, the goal of this paper is: 

\begin{quote}
\emph{Given an oracle that can query (noiseless or noisy) entries of the whole tensor $\tT = \varphi(\tu{},\tau) \in \bR^{n_1\times n_2\times\cdots\times n_d}$ of TR or TT format, where $(\tu{},\tau)\in \calU_{\vec{r},\vec{n}}^d$ with $\tu{}$, $\tau$, and $\vec{r}$ unknown, can we recover the underlying graph, i.e., find some permutation in the equivalence class $\CPTR^d(\tau)$ or $\CPTT^d(\tau)$? If yes, what is the complexity and the probability of success?}
\end{quote}

It is worth remarking that two permutations representing the same tensor $\tT$ do not always belong to the same equivalence class. For example, if $\vec{r} = (1,\ldots,1)$, then $\tT = \varphi(\tu{},\tau)$ can be represented by either TR or TT format with respect to any permutation. To strengthen our argument, we prove in Appendix~\ref{sec:unique_perm} that under certain conditions, only permutations within the same equivalence class can yield the identical tensor $\tT$. This result establishes the uniqueness of the correct permutation with respect to the equivalence relation.

A related question is how to identify a permutation that may not be in the equivalence class $\CPTR^d(\tau)$ or $\CPTT^d(\tau)$, but 
can represent the target tensor $\tT$ with acceptable error or acceptably increased bond dimension. Although this topic of approximate recovery is important, it falls outside the scope of this paper, which concentrates on the precise recovery of the permutation.

\section{Proposed Algorithms}
\label{sec:alg}

In this section, we introduce algorithms for recovering underlying graphs of the tensor ring and tensor train format 
as well as the intuition behind it. The main idea is divide-and-conquer: for determining the correct relative positions of all $d$ indices on a loop (or a path), it suffices to determine the relative positions of any given $4$ indices (for TR format) or $3$ indices (for TT format). Therefore, we first design algorithms for these subproblems and then recover the whole graph based on solutions to the subproblems. The tensor ring and tensor train cases will be covered in Sections~\ref{sec:alg_TR} and \ref{sec:alg_TT}, respectively. 

\subsection{Recovery of the underlying loop for tensor ring format}
\label{sec:alg_TR}

Given four indices in some order, a basic question is whether one could travel around the underlying loop such that the four indices are visited in exactly the same order as given. Note that four is the smallest number of indices such that the question is nontrivial. Thus, one can classify the order into ``correct'' or ``incorrect'' based on the answer to the question. It is clear that the classification is independent of rotations and reflections. Based on this observation, we make the following definitions.

\begin{definition}
For tensor ring format with $\tau\in S_d$ being the underlying permutation and four different indices $i_1,i_2,i_3,i_4\in \{1,2,\dots,d\}$, we say that $(i_1,i_2,i_3,i_4)$ is of the \emph{correct order} with respect to $\tau$ if there exists some $s\in\{1,2,3,4\}$ such that $\tau^{-1}(i_s)<\tau^{-1}(i_{s+1})<\tau^{-1}(i_{s+2})<\tau^{-1}(i_{s+3})$ or $\tau^{-1}(i_s)>\tau^{-1}(i_{s+1})>\tau^{-1}(i_{s+2})>\tau^{-1}(i_{s+3})$, where subscript of $i_\cdot$ is understood via $\text{mod }4$. 
\end{definition}

Figure~\ref{fig:TR_perm} illustrates an example: $(1,2,6,3)$ is of the correct order while $(1,2,3,6)$ is not. For identifying the underlying loop, it suffices to consider whether four indices are of the correct order with respect to $\tau$ due to the following proposition. 

\begin{proposition}\label{prop:equiv_class_TR}
Let $\tau,\tau'\in S_d$. $\tau'\in \CPTR^d(\tau)$ if and only if for any $1\leq j_1<j_2<j_3<j_4\leq d$, $(\tau'(j_1),\tau'(j_2),\tau'(j_3),\tau'(j_4))$ is of the correct order with respect to $\tau$.
\end{proposition}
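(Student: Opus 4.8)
The plan is to transport the statement to a single permutation and then to a self-contained combinatorial fact about cyclic orders. Set $\sigma=\tau^{-1}\circ\tau'\in S_d$. Since $\CPTR^d(\tau)=\{\tau\circ\alpha^k\circ\beta^\ell\}$ and $\{\alpha^k\circ\beta^\ell:0\le k\le d-1,\ \ell\in\{0,1\}\}=\langle\alpha,\beta\rangle$ is a subgroup (one checks $\beta\circ\alpha\circ\beta=\alpha^{-1}$, so the set is closed), the condition $\tau'\in\CPTR^d(\tau)$ is equivalent to $\sigma\in\langle\alpha,\beta\rangle$. Moreover, writing $i_s=\tau'(j_s)$ we get $\tau^{-1}(i_s)=\sigma(j_s)$, so $(\tau'(j_1),\tau'(j_2),\tau'(j_3),\tau'(j_4))$ is of the correct order with respect to $\tau$ exactly when the integer tuple $(\sigma(j_1),\sigma(j_2),\sigma(j_3),\sigma(j_4))$ is of the correct order with respect to the identity, i.e.\ is a cyclic rotation of the sorted tuple or the reversal of one; call such a $4$-tuple \emph{cyclically monotone}. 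The Proposition thus becomes: $\sigma\in\langle\alpha,\beta\rangle$ if and only if $(\sigma(j_1),\sigma(j_2),\sigma(j_3),\sigma(j_4))$ is cyclically monotone for every $1\le j_1<j_2<j_3<j_4\le d$. (For $d\le 3$ both sides hold trivially, so I assume $d\ge 4$.)

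The key lemma is that a permutation sequence $(\sigma(1),\dots,\sigma(d))$ of $\{1,\dots,d\}$ satisfies $\sigma\in\langle\alpha,\beta\rangle$ if and only if the consecutive differences $\sigma(j+1)-\sigma(j)$, taken mod $d$, are all $\equiv 1$ or all $\equiv -1$: indeed $\sigma=\alpha^k$ gives $\sigma(j)\equiv j+k$ and $\sigma=\alpha^k\circ\beta$ gives $\sigma(j)\equiv (k+1)-j$, so in both cases the differences are constant, and conversely a constant difference $\pm1$ pins $\sigma$ down to one of these two families. I also use the complementary fact that if \emph{every} consecutive difference lies in $\{1,-1\}\pmod d$ then they are in fact all equal: the map $j\mapsto\sigma(j)$ is then a walk on the cycle graph $C_d$ visiting each vertex exactly once, and a Hamiltonian path in $C_d$ cannot reverse direction without revisiting a vertex. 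Putting these together, $\sigma\notin\langle\alpha,\beta\rangle$ forces an index $i$ with $\delta:=\sigma(i+1)-\sigma(i)\not\equiv\pm1\pmod d$.

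For the forward (easy) direction, if $\sigma\in\langle\alpha,\beta\rangle$ then $\sigma$ acts on $\mathbb{Z}/d\mathbb{Z}$ as a translation (order-preserving on the cyclic order) or as $x\mapsto c-x$ (order-reversing); since $(j_1,j_2,j_3,j_4)$ is cyclically monotone whenever $j_1<j_2<j_3<j_4$, so is its image under $\sigma$, which gives the claim. For the converse I argue by contrapositive. Suppose $\sigma\notin\langle\alpha,\beta\rangle$ and take $i$ with $\delta\not\equiv\pm1\pmod d$. Then on $C_d$ both arcs joining $\sigma(i)$ and $\sigma(i+1)$ contain an interior vertex, so I may pick a value $u$ strictly inside one arc and a value $v$ strictly inside the other; the cyclic order of the four values is then $\sigma(i),u,\sigma(i+1),v$. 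Let $p,q$ be the positions with $\sigma(p)=u$ and $\sigma(q)=v$; they are distinct and distinct from $i,i+1$. In the $4$-subsequence indexed by $\{i,i+1,p,q\}$ listed in increasing order, the entries $\sigma(i)$ and $\sigma(i+1)$ are consecutive, because no integer lies strictly between $i$ and $i+1$. But in the sorted necklace $\sigma(i)-u-\sigma(i+1)-v$ of these four values the pair $\{\sigma(i),\sigma(i+1)\}$ is not an edge, hence these two values are never consecutive in any rotation or reversal of that necklace. Therefore this $4$-subsequence is not cyclically monotone, contradicting the hypothesis; so $\sigma\in\langle\alpha,\beta\rangle$.

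The routine steps are the translation dictionary in the first paragraph and the difference characterization of $\langle\alpha,\beta\rangle$. The step needing the most care is the converse: one must (i) extract, from $\sigma\notin\langle\alpha,\beta\rangle$, a consecutive pair of values at cyclic distance at least $2$ on both sides --- this is exactly where the ``no direction reversal on $C_d$'' observation is used --- and (ii) see that the selected four positions produce a $4$-tuple in which two prescribed values are forced to be adjacent although they are non-adjacent in the unique admissible necklace. This adjacency clash is the heart of the argument, and it also explains why four indices rather than three are required: with three points every arrangement is already a necklace rotation, so no obstruction can appear.
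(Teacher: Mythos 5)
Your proof is correct. Note that the paper itself offers no argument to compare against --- it simply declares the proof of Proposition~3.2 ``straightforward and omitted'' --- so your write-up in effect supplies the missing details, and they check out. The reduction to $\sigma=\tau^{-1}\circ\tau'$ and to membership in the dihedral group $\langle\alpha,\beta\rangle$ (using $\beta\circ\alpha\circ\beta=\alpha^{-1}$, $\beta^2=\mathrm{id}$) is the natural normalization; the characterization of $\langle\alpha,\beta\rangle$ by constant consecutive differences $\pm1 \pmod d$, together with the observation that a step pattern contained in $\{+1,-1\}$ cannot switch sign without revisiting a vertex of $C_d$, correctly extracts a consecutive pair $i,i+1$ with $\sigma(i+1)-\sigma(i)\not\equiv\pm1$; and the adjacency clash --- positions $i,i+1$ are forced to be consecutive slots in the sorted quadruple, while the values $\sigma(i),\sigma(i+1)$ are a diagonal, not an edge, of the necklace $\sigma(i)$--$u$--$\sigma(i+1)$--$v$, whereas any cyclically monotone quadruple lists its values in circular order so that consecutive slots are necklace edges --- is a valid obstruction. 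The easy direction (translations preserve and reflections reverse the cyclic order of any quadruple) is also fine, as is the trivial treatment of $d\le 3$. The only stylistic suggestion is to state explicitly, as a one-line lemma, the fact you use implicitly: a $4$-tuple is cyclically monotone if and only if it is a rotation or reversal of the circular order of its values, so consecutive tuple entries must be adjacent on the necklace.
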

The proof of \ref{prop:equiv_class_TR} is straightforward and omitted.

Next, we focus on the problem of determining the correct order of four specific indices $i_1,i_2,i_3,i_4\in\{1,2,\dots,d\}$. The main idea is to construct three matrices and compare their ranks or singular values. More specifically, the three matrices are $M_{(i_1,i_2),(i_3,i_4)}\in\bR^{(n_{i_1}n_{i_2})\times (n_{i_3}n_{i_4})}$, $M_{(i_1,i_3),(i_2,i_4)}\in\bR^{(n_{i_1} n_{i_3})\times (n_{i_2} n_{i_4})}$, and $M_{(i_1,i_4),(i_2,i_3)}\in\bR^{(n_{i_1} n_{i_4})\times (n_{i_2} n_{i_3})}$ defined via matricization of the tensor:
\begin{align}\label{M1234_TR}
 & \begin{aligned}
     M_{(i_1,i_2),(i_3,i_4)}& ((x_{i_1},x_{i_2}),(x_{i_3},x_{i_4}))\\
     & = \varphi(\tu{}, \tau)(y_1,\dots,y_{i_1'-1},x_{i_1'}, y_{i_1'+1},\dots,y_{i_2'-1},x_{i_2'}, \\ 
     & \qquad\qquad\ y_{i_2'+1},\dots,y_{i_3'-1},x_{i_3'},y_{i_3'+1},\dots,y_{i_4'-1},x_{i_4'},y_{i_4'+1},\dots,y_d),
 \end{aligned} \\
 & \label{M1324_TR}
     M_{(i_1,i_3),(i_2,i_4)}((x_{i_1},x_{i_3}),(x_{i_2},x_{i_4})) = M_{(i_1,i_2),(i_3,i_4)}((x_{i_1},x_{i_2}),(x_{i_3},x_{i_4})), \\
 \intertext{and} 
 & \label{M1423_TR}
     M_{(i_1,i_4),(i_2,i_3)}((x_{i_1},x_{i_4}),(x_{i_2},x_{i_3})) = M_{(i_1,i_2),(i_3,i_4)}((x_{i_1},x_{i_2}),(x_{i_3},x_{i_4})),
 \end{align}
 for $1\leq x_{i_s}\leq n_{i_s}$, $1\leq s\leq 4$, where $\{i_1',i_2',i_3',i_4'\}=\{i_1,i_2,i_3,i_4\}$, $1\leq i_1'<i_2'<i_3'<i_4'\leq d$, and $y_i\in \{1,2,\dots,n_i\}$ is a fixed index for each $i\in\{1,2,\dots,d\}\backslash\{i_1,i_2,i_3,i_4\}$. These three matrices can be constructed by calling the oracle that returns entries of $\mathbf{T} = \varphi(\tu{}, \tau)$. Intuitively, if $(i_1,i_2,i_3,i_4)$ is of the correct order, $r_1=r_2=\cdots = r_d = R$, and $\vec{n}$ is large enough, then $\rk(M_{(i_1,i_3),(i_2,i_4)})=R^4$, while $\rk(M_{(i_1,i_2),(i_3,i_4)})$ and $\rk(M_{(i_1,i_4),(i_2,i_3)})$ are at most $R^2$, as shown in Figure~\ref{fig:intuition_TR}.
 \begin{figure}[htb!]
    \centering
    \begin{subfigure}[b]{0.48\textwidth}
    \centering
    \begin{tikzpicture}[
        roundnode/.style={circle, draw = green},
    ]
    
    \draw (-1.8,-0.75) node[roundnode] (i1) {$\tu{i_1}$};
    \draw (-1.8,0.75) node[roundnode] (i2) {$\tu{i_2}$};
    \draw (1.8,0.75) node[roundnode] (i3) {$\tu{i_3}$};
    \draw (1.8,-0.75) node[roundnode] (i4) {$\tu{i_4}$};
    
    \draw[blue,-] (i1.north) -- (i2.south);
    \draw[blue,-] (i2.east) -- (i3.west);
    \draw[blue,-] (i3.south) -- (i4.north);
    \draw[blue,-] (i4.west) -- (i1.east);
    \draw[red, dashed] (-2.5,-1.5) rectangle (-1.1,1.5);
    \draw[red, dashed] (1.1,-1.5) rectangle (2.5,1.5);
    \draw[-] (i1.west) -- (-3,-0.75);
    \draw[-] (i2.west) -- (-3,0.75);
    \draw[-] (i3.east) -- (3,0.75);
    \draw[-] (i4.east) -- (3,-0.75);
    \end{tikzpicture}
    \caption{$M_{(i_1,i_2),(i_3,i_4)}$}
    \end{subfigure}
    \hfill
    \begin{subfigure}[b]{0.48\textwidth}
    \centering
    \begin{tikzpicture}[
        roundnode/.style={circle, draw = green},
    ]
    
    \draw (-1.8,-0.75) node[roundnode] (i1) {$\tu{i_1}$};
    \draw (-1.8,0.75) node[roundnode] (i4) {$\tu{i_4}$};
    \draw (1.8,0.75) node[roundnode] (i3) {$\tu{i_3}$};
    \draw (1.8,-0.75) node[roundnode] (i2) {$\tu{i_2}$};
    
    \draw[blue,-] (i1.north) -- (i4.south);
    \draw[blue,-] (i4.east) -- (i3.west);
    \draw[blue,-] (i3.south) -- (i2.north);
    \draw[blue,-] (i2.west) -- (i1.east);
    \draw[red, dashed] (-2.5,-1.5) rectangle (-1.1,1.5);
    \draw[red, dashed] (1.1,-1.5) rectangle (2.5,1.5);
    \draw[-] (i1.west) -- (-3,-0.75);
    \draw[-] (i4.west) -- (-3,0.75);
    \draw[-] (i3.east) -- (3,0.75);
    \draw[-] (i2.east) -- (3,-0.75);
    \end{tikzpicture}
    \caption{$M_{(i_1,i_4),(i_2,i_3)}$}
    \end{subfigure}
    \hfill
    \begin{subfigure}[b]{0.48\textwidth}
    \centering
    \begin{tikzpicture}[
        roundnode/.style={circle, draw = green},
    ]
    
    \draw (-1.8,-0.75) node[roundnode] (i1) {$\tu{i_1}$};
    \draw (-1.8,0.75) node[roundnode] (i3) {$\tu{i_3}$};
    \draw (1.8,0.75) node[roundnode] (i4) {$\tu{i_4}$};
    \draw (1.8,-0.75) node[roundnode] (i2) {$\tu{i_2}$};
    
    \draw[blue,-] (i1.east) -- (i2.west);
    \draw[blue,-] (i2.north west) -- (i3.south east);
    \draw[blue,-] (i3.east) -- (i4.west);
    \draw[blue,-] (i4.south west) -- (i1.north east);
    \draw[red, dashed] (-2.5,-1.5) rectangle (-1.1,1.5);
    \draw[red, dashed] (1.1,-1.5) rectangle (2.5,1.5);
    \draw[-] (i1.west) -- (-3,-0.75);
    \draw[-] (i3.west) -- (-3,0.75);
    \draw[-] (i4.east) -- (3,0.75);
    \draw[-] (i2.east) -- (3,-0.75);
    \end{tikzpicture}
    \caption{$M_{(i_1,i_3),(i_2,i_4)}$}
    \end{subfigure}
    \caption{Intuition of Algorithm~\ref{alg: 4index}: when $(i_1,i_2,i_3,i_4)$ is of the correct order, the rank of $M_{(i_1,i_2),(i_3,i_4)}$ and $M_{(i_1,i_4),(i_2,i_3)}$ is at most $R^2$, while the rank of $M_{(i_1,i_3),(i_2,i_4)}$ is generically at least $R^4$. The reason is that the number of blue lines connecting red boxes is two in (A) and (B), but four in (C), where red boxes group the column indices in the matricization.} 
    \label{fig:intuition_TR}
\end{figure}
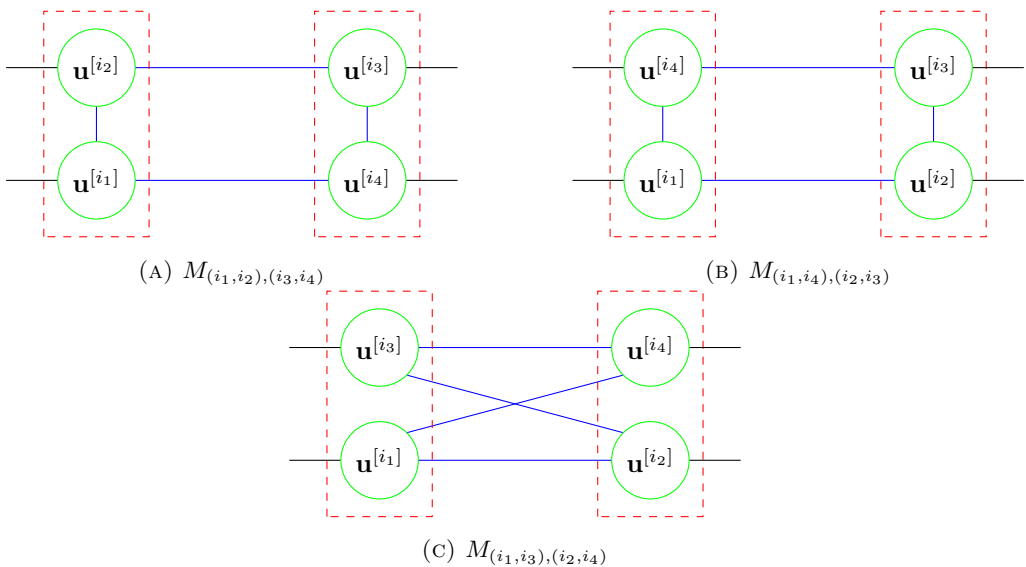
This leads to the design of Algorithm~\ref{alg: 4index}. An input $R$, ideally an approximation of the bond dimension of the tensor ring structure, is required in Algorithm~\ref{alg: 4index}. We will discuss the selection of $R$ later. 

\begin{algorithm}[htb!]
\caption{Determine the order of four indices for TR format}
\label{alg: 4index}
\begin{algorithmic}[1]
\State \textbf{Input:} Four indices $i_1,i_2,i_3,i_4\in\{1,2,\dots,d\}$, $R\in\mathbb{N}_+$ approximating the bond dimension, and an oracle computing entries of $\tT = \varphi(\tu{},\tau)$.
 \State Sample $y_i\in \{1,2,\dots,n_i\}$ for $i\in\{1,2,\dots,d\}\backslash\{i_1,i_2,i_3,i_4\}$.
 \State Construct matrices $M_{(i_1,i_2),(i_3,i_4)}$, $M_{(i_1,i_3),(i_2,i_4)}$, and $M_{(i_1,i_4),(i_2,i_3)}$ via \eqref{M1234_TR}, \eqref{M1324_TR}, and \eqref{M1423_TR}.
 \If {$\sigma_{R^4}(M_{(i_1,i_2),(i_3,i_4)})>\max\{\sigma_{R^4}(M_{(i_1,i_3),(i_2,i_4)}),\sigma_{R^4}(M_{(i_1,i_4),(i_2,i_3)})\}$}
     \State Return $(i_1,i_3,i_2,i_4)$.
 \EndIf
 \If {$\sigma_{R^4}(M_{(i_1,i_3),(i_2,i_4)})>\max\{\sigma_{R^4}(M_{(i_1,i_2),(i_3,i_4)}),\sigma_{R^4}(M_{(i_1,i_4),(i_2,i_3)})\}$}
     \State Return $(i_1,i_2,i_3,i_4)$.
 \EndIf
 \If {$\sigma_{R^4}(M_{(i_1,i_4),(i_2,i_3)})>\max\{\sigma_{R^4}(M_{(i_1,i_2),(i_3,i_4)}),\sigma_{R^4}(M_{(i_1,i_3),(i_2,i_4)})\}$}
     \State Return $(i_1,i_2,i_4,i_3)$.
 \EndIf
 \end{algorithmic}
 \end{algorithm}

 \begin{remark}
 In the noiseless case where entries of $\tT = \varphi(\tu{},\tau)$ can be observed exactly, noticing \eqref{eq:rk_TR}, one can replace the comparison of the $R^4$-th largest singular values of $M_{(i_1,i_2),(i_3,i_4)}$, $M_{(i_1,i_3),(i_2,i_4)}$, and $M_{(i_1,i_4),(i_2,i_3)}$ by the comparison of the ranks of these three matrices. Then Algorithm~\ref{alg: 4index} can be implemented even if the positive integer $R$ approximating the bond dimension is unknown.
 \end{remark}

Now we consider our goal of obtaining some $\tau'\in\CPTR^d(\tau)$. By Proposition~\ref{prop:equiv_class_TR}, it suffices to find some $\tau'\in S_d$, such that $(\tau'(j_1),\tau'(j_2),\tau'(j_3),\tau'(j_4))$ is of the correct order with respect to $\tau$. Note that Algorithm~\ref{alg: 4index} can return the correct order of any four indices that fits the requirement on the desired $\tau'$. So the idea is to apply Algorithm~\ref{alg: 4index} repeatedly and to extend the sequence of indices in the correct order. We make this idea more precise in Definition~\ref{def:consistent_TR}.
 
\begin{definition}\label{def:consistent_TR}
 For TR format, let $\tau\in S_d$ and let $(i_1,i_2,\dots,i_t)$ be a permutation of $t$ different indices in $\{1,2,\dots,d\}$, $4\leq t\leq d$. Then $(i_1,i_2,\dots,i_t)$ is said to be consistent with $\tau$ if $(i_{j_1}, i_{j_2}, i_{j_3}, i_{j_4})$ is of the correct order with respect to $\tau$ for any $1\leq j_1<j_2<j_3<j_4\leq t$.
 \end{definition}
 
Suppose $4\leq t<d$ and we already have a permutation of $(1,2,\dots,t)$, say $(i_1,i_2,\dots,i_t)$, that is consistent with the correct complete order $\tau$. We then aim to insert $(t+1)$ into the right position in $(i_1,i_2,\dots,i_t)$ that maintains the consistency with $\tau$. For any $1\leq j_1<j_2<j_3\leq t$, one can run Algorithm~\ref{alg: 4index} for $t+1$, $i_{j_1}$, $i_{j_2}$, and $i_{j_3}$ to determine where $t+1$ should be inserted: 1) between $i_{j_1}$ and $i_{j_2}$, 2) between $i_{j_2}$ and $i_{j_3}$, or 3) before $i_{j_1}$ or after $i_{j_3}$. Then one can apply the technique similar to binary search to find the correct position of $t+1$ by running Algorithm~\ref{alg: 4index} for at most $O(\log t)$ times and hence obtain a permutation of $(1,2,\dots,t+1)$, say $(i_1,i_2,\dots, i_{t+1})$, that is still consistent with $\tau$. The whole procedure is outlined in Algorithm~\ref{alg: whole_ring}.
 
\begin{algorithm}[htb!]
\caption{Recover the underlying graph for TR format}
\label{alg: whole_ring}
\begin{algorithmic}[1]
\State \textbf{Input:} $R\in\mathbb{N}_+$ approximating the bond dimension and a oracle computing entries of $\tT = \varphi(\tu{},\tau)$.
 \State Let $(i_1,i_2,i_3,i_4)$ be the output of Algorithm~\ref{alg: 4index} with the input four indices being $1,2,3,4$.
 \For {$t = 4 : d-1$}
     \State $j_{\min} = 1$, $j_{\max} = t+1$. \Comment{Use the idea of binary search to find a location in $(i_1,i_2,\dots,i_t)$ such that inserting $t+1$ maintains the consistency with $\tau$.}
     \While {$j_{\max}-j_{\min}\geq 2$}
        \If {$j_{\max}-j_{\min}\geq 3$}
            \State $j_1 = j_{\min}$, $j_2 = j_{\min} + \lfloor\frac{1}{3}\cdot (j_{\max}-j_{\min})\rfloor$, $j_3 = j_{\min} + \lfloor\frac{2}{3}\cdot (j_{\max}-j_{\min})\rfloor$.
        \ElsIf{ $j_{\max} \leq t$}
            \State $j_1 = j_{\min}$, $j_2 = j_{\min} + 1$, $j_3 = j_{\max}$.
        \Else 
            \State $j_1 = 1$, $j_2 = t-1$, $j_3 = t$.
        \EndIf
        \State Run Algorithm~\ref{alg: 4index} with input being $t+1$, $i_{j_1}$, $i_{j_2}$, and $i_{j_3}$.
        \If {the output of Algorithm~\ref{alg: 4index} is equivalent to $(i_{j_1},t+1 , i_{j_2}, i_{j_3})$}
            \State $j_{\min} = j_1$, $j_{\max} = j_2$.
        \ElsIf {the output of Algorithm~\ref{alg: 4index} is equivalent to $(i_{j_1} , i_{j_2}, t+1, i_{j_3})$}
            \State $j_{\min} = j_2$, $j_{\max} = j_3$.
        \Else
            \State $j_{\min} = j_3$.
        \EndIf
    \EndWhile
    \State $(i_1,i_2,\dots,i_{t+1}) = (i_1,i_2,\dots,i_{j_{\min}}, t+1, i_{j_{\max}}, i_{j_{\max}+1},\dots, i_t)$.
 \EndFor
 \State Return $\tau'\in S_d$ with $\tau'(j) = i_j$, $1\leq j\leq d$.
 \end{algorithmic}
 \end{algorithm}
 
\begin{remark}
Algorithm~\ref{alg: whole_ring} combines the partial order of four indices output by Algorithm~\ref{alg: 4index} to recover the order of all $d$ indices. To make the overall procedure more robust, one can run Algorithm~\ref{alg: 4index} several times on each quadruplet ($t+1$, $i_{j_1}$, $i_{j_2}$, $i_{j_3}$), and take the majority vote of the outputs to determine their order. 
\end{remark}

Algorithm~\ref{alg: whole_ring} can be implemented by observing $O(d\log d\cdot n_{\max}^4)$ entries of $\tT=\varphi(\tu{},\tau)$ and computing the $R^4$-th singular value of $O(d\log d)$ matrices of size no larger than $n_{\max}^2\times n_{\max}^2$, where $n_{\max} = \max_{1\leq i\leq d} n_i$. The total complexity of Algorithm~\ref{alg: whole_ring} is $O(d\log d\cdot n_{\max}^6)$. (Recall that the computational complexity of SVD is $O(m_1 m_2\cdot \min\{m_1,m_2\})$ for a $m_1\times m_2$ matrix \cite{trefethen1997numerical}.) It is also possible to further reduce the computational complexity using randomized SVD (see e.g.,  \cite{halko2011finding}), while we will not go into the details here. 

\subsection{Recovery of the underlying path for tensor train format}
\label{sec:alg_TT}

The idea for recovering the underlying graph of TT format is similar to that of TR format. The difference is that, the rotational invariance of TR format no longer holds for TT format, i.e., $\alpha\in S_d$ defined via $\alpha(j) = j+1,\ j\in\{1,2,\dots,d\}$, appears in \eqref{equiv_class_TR}, but not in \eqref{equiv_class_TT}. Therefore, the ordering of four indices in TT format has 12 different cases, much more complicated than in TR format. As an alternative, we consider the relative positions of three indices, which leads to the simplest nontrivial subproblems and is defined below with reflective invariance.

\begin{definition}
For tensor train format with $\tau\in S_d$ being the underlying permutation and three different indices $i_1,i_2,i_3\in \{1,2,\dots,d\}$, we say that $(i_1,i_2,i_3)$ is of the correct order with respect to $\tau$ if $\tau^{-1}(i_1)<\tau^{-1}(i_2)<\tau^{-1}(i_3)$ or $\tau^{-1}(i_3)<\tau^{-1}(i_2)<\tau^{-1}(i_1)$.
\end{definition}

Similarly, the relative positions of all triples of indices provide all information about the underlying permutation, up to reflection.

\begin{proposition}\label{prop:equiv_class_TT}
Let $\tau,\tau'\in S_d$. $\tau'\in \CPTR^d(\tau)$ if and only if for any $1\leq j_1<j_2<j_3\leq d$, $(\tau'(j_1),\tau'(j_2),\tau'(j_3))$ is of the the correct order with respect to $\tau$.
\end{proposition}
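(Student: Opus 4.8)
The plan is to reduce to a statement about a single permutation and then run a short induction on adjacent triples; this parallels the (omitted) argument for Proposition~\ref{prop:equiv_class_TR} but is simpler. First note that the conclusion should read $\tau'\in\CPTT^d(\tau)$, and recall $\CPTT^d(\tau)=\{\tau,\tau\circ\beta\}$ with $\beta(j)=d+1-j$. Put $\pi:=\tau^{-1}\circ\tau'\in S_d$; then $\tau^{-1}(\tau'(j))=\pi(j)$ for all $j$, so by the definition of the correct order for TT format the hypothesis is exactly: for every $1\le j_1<j_2<j_3\le d$ the triple $(\pi(j_1),\pi(j_2),\pi(j_3))$ is strictly monotone (increasing or decreasing). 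It therefore suffices to show that this forces $\pi=\mathrm{id}$ or $\pi=\beta$, since $\pi=\mathrm{id}\iff\tau'=\tau$ and $\pi=\beta\iff\tau'=\tau\circ\beta$.

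The forward direction of the proposition is immediate: if $\tau'=\tau$ then $\pi=\mathrm{id}$ and every triple $(\pi(j_1),\pi(j_2),\pi(j_3))=(j_1,j_2,j_3)$ is increasing; if $\tau'=\tau\circ\beta$ then $\pi=\beta$ reverses order, so every such triple is decreasing. In both cases $(\tau'(j_1),\tau'(j_2),\tau'(j_3))$ is of the correct order with respect to $\tau$.

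For the converse I would split on the sign of the first step. Since $\pi$ is injective, $\pi(1)\ne\pi(2)$. Suppose $\pi(1)<\pi(2)$; I claim $\pi(1)<\pi(2)<\cdots<\pi(d)$. Argue by induction: assuming $\pi(1)<\cdots<\pi(m)$ with $m\ge 2$, apply the hypothesis to the triple $(m-1,m,m+1)$; it must be increasing, because $\pi(m-1)<\pi(m)$ already excludes the decreasing alternative, so $\pi(m)<\pi(m+1)$. Hence $\pi$ is strictly increasing, and being a permutation of $\{1,\dots,d\}$ it equals $\mathrm{id}$, i.e.\ $\tau'=\tau$. The case $\pi(1)>\pi(2)$ is entirely symmetric and forces $\pi$ strictly decreasing, hence $\pi(j)=d+1-j=\beta(j)$ and $\tau'=\tau\circ\beta$. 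Either way $\tau'\in\CPTT^d(\tau)$, as desired.

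There is no real obstacle here; the only points requiring a little care are the translation between $\tau$, $\tau'$, and $\pi$ (where the reflective invariance built into ``correct order'' is used) and the observation that a local, adjacent-triple induction already promotes ``every length-three subsequence is monotone'' to ``$\pi$ is globally monotone''. If one prefers to avoid the induction, the same conclusion follows by testing the triples $(1,2,k)$ and $(1,k,\ell)$ to show that the sign of $\pi(\ell)-\pi(k)$ agrees with that of $\pi(2)-\pi(1)$ for all $k<\ell$, which is precisely monotonicity of $\pi$.
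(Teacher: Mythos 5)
Your proposal is correct. The paper does not actually supply a proof of this proposition (it is treated as straightforward and omitted, just like its TR analogue), so there is nothing to compare against line by line; your argument is precisely the kind of routine verification the authors had in mind, and it is complete: the reduction to $\pi=\tau^{-1}\circ\tau'$ correctly translates the hypothesis into ``every triple $(\pi(j_1),\pi(j_2),\pi(j_3))$ is strictly monotone,'' and the adjacent-triple induction legitimately upgrades this to global monotonicity of $\pi$, forcing $\pi=\mathrm{id}$ or $\pi=\beta$, i.e.\ $\tau'\in\{\tau,\tau\circ\beta\}=\CPTT^d(\tau)$. You are also right that the statement contains a typo: as printed it asserts $\tau'\in\CPTR^d(\tau)$, which would be false (a rotation $\tau'=\tau\circ\alpha$ lies in $\CPTR^d(\tau)$ yet fails the triple condition, e.g.\ the triple $(2,3,4)$ maps to $(3,4,1)$ when $\tau=\mathrm{id}$, $d=4$); the intended conclusion, consistent with the equivalence class \eqref{equiv_class_TT} used in your proof, is $\tau'\in\CPTT^d(\tau)$.
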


For any three indices $i_1,i_2,i_3$, determining their order can still be done by investigating three matrices obtained by downsampling and reshaping/matricizing, analogous to the TR case: $M_{i_1, (i_2,i_3)}\in\bR^{n_{i_1}\times (n_{i_2}n_{i_3})}$, $M_{i_2,(i_3,i_1)}\in\bR^{n_{i_2}\times (n_{i_3} n_{i_1})}$, and $M_{i_3,(i_1,i_2)}\in\bR^{n_{i_3}\times (n_{i_1} n_{i_2})}$ with entries being
 \begin{align}\label{M123_TT}
 & \begin{aligned}
    M_{i_1,(i_2,i_3)}&(x_{i_1},(x_{i_2},x_{i_3})) \\
    & = \tau(\tu{},
     \tau)(y_1,\dots,y_{i_1'-1},x_{i_1'}, y_{i_1'+1},\dots,y_{i_2'-1}, \\
     & \qquad \qquad x_{i_2'},y_{i_2'+1},\dots,y_{i_3'-1},x_{i_3'},y_{i_3'+1},\dots,y_d), 
 \end{aligned}\\
 & \label{M231_TT}
     M_{i_2,(i_3,i_1)}(x_{i_2},(x_{i_3},x_{i_1})) = M_{i_1,(i_2,i_3)}(x_{i_1},(x_{i_2},x_{i_3})), \\
 \intertext{and} 
 & \label{M312_TT}
     M_{i_3,(i_1,i_2)}(x_{i_3},(x_{i_1},x_{i_2})) = M_{i_1,(i_2,i_3)}(x_{i_1},(x_{i_2},x_{i_3})),
 \end{align}
 for $1\leq x_{i_s}\leq n_{i_s}$, $1\leq s\leq 3$, where $\{i_1',i_2',i_3'\}=\{i_1,i_2,i_3\}$, $1\leq i_1'<i_2'<i_3'\leq d$, and $y_i\in \{1,2,\dots,n_i\}$ is a fixed index for each $i\in\{1,2,\dots,d\}\backslash\{i_1,i_2,i_3\}$. Similar to the TR case, one can obtain some intuition by considering the case with $r_2= r_3 = \dots=r_d = R$ and sufficiently large $\vec{n}$: if $(i_1,i_2,i_3)$ is of the correct order than the ranks of $M_{i_1,(i_2,i_3)}$ and $M_{i_3,(i_1,i_2)}$ are both no larger than $R$ while the rank of $M_{i_2,(i_3,i_1)}$ could be $R^2$. This is shown in Figure~\ref{fig:intuition_TT}.
 \begin{figure}[htb!]
    \centering
    \begin{subfigure}[b]{0.48\textwidth}
    \centering
    \begin{tikzpicture}[
        roundnode/.style={circle, draw = green},
    ]
    
    \draw (-1.8,0) node[roundnode] (i1) {$\tu{i_1}$};
    \draw (1.8,0.75) node[roundnode] (i2) {$\tu{i_2}$};
    \draw (1.8,-0.75) node[roundnode] (i3) {$\tu{i_3}$};
    
    \draw[blue,-] (i1.east) -- (i2.west);
    \draw[blue,-] (i2.south) -- (i3.north);
    \draw[red, dashed] (1.1,-1.5) rectangle (2.5,1.5);
    \draw[-] (i1.west) -- (-3,0);
    \draw[-] (i2.east) -- (3,0.75);
    \draw[-] (i3.east) -- (3,-0.75);
    \end{tikzpicture}
    \caption{$M_{i_1,(i_2,i_3)}$}
    \end{subfigure}
    \hfill
    \begin{subfigure}[b]{0.48\textwidth}
    \centering
    \begin{tikzpicture}[
        roundnode/.style={circle, draw = green},
    ]
    
    \draw (-1.8,-0.75) node[roundnode] (i1) {$\tu{i_1}$};
    \draw (-1.8,0.75) node[roundnode] (i2) {$\tu{i_2}$};
    \draw (1.8,0) node[roundnode] (i3) {$\tu{i_3}$};
    
    \draw[blue,-] (i1.north) -- (i2.south);
    \draw[blue,-] (i2.east) -- (i3.west);
    \draw[red, dashed] (-2.5,-1.5) rectangle (-1.1,1.5);
    \draw[-] (i1.west) -- (-3,-0.75);
    \draw[-] (i2.west) -- (-3,0.75);
    \draw[-] (i3.east) -- (3,0);
    \end{tikzpicture}
    \caption{$M_{i_3,(i_1,i_2)}$}
    \end{subfigure}
    \hfill
    \begin{subfigure}[b]{0.48\textwidth}
    \centering
    \begin{tikzpicture}[
        roundnode/.style={circle, draw = green},
    ]
    
    \draw (-1.8,-0.75) node[roundnode] (i3) {$\tu{i_3}$};
    \draw (-1.8,0.75) node[roundnode] (i1) {$\tu{i_1}$};
    \draw (1.8,0) node[roundnode] (i2) {$\tu{i_2}$};
    
    \draw[blue,-] (i1.east) -- (i2.north west);
    \draw[blue,-] (i2.south west) -- (i3.east);
    \draw[red, dashed] (-2.5,-1.5) rectangle (-1.1,1.5);
    \draw[-] (i1.west) -- (-3,0.75);
    \draw[-] (i3.west) -- (-3,-0.75);
    \draw[-] (i2.east) -- (3,0);
    \end{tikzpicture}
    \caption{$M_{i_2,(i_3,i_1)}$}
    \end{subfigure}
    \caption{Illustration of the intuition of Algorithm~\ref{alg: 3index}: when $(i_1,i_2,i_3)$ is of the correct order, the rank of $M_{i_1,(i_2,i_3)}$ and $M_{i_3,(i_1,i_2)}$ is at most $R$, while the rank of $M_{i_2,(i_3,i_1)}$ is generically at least $R^2$. The reason is that the number of blue lines connecting the red box and the node outside the red box is one in (A) and (B), but two in (C), where red boxes group the column indices in the matricization.} 
    \label{fig:intuition_TT}
\end{figure}
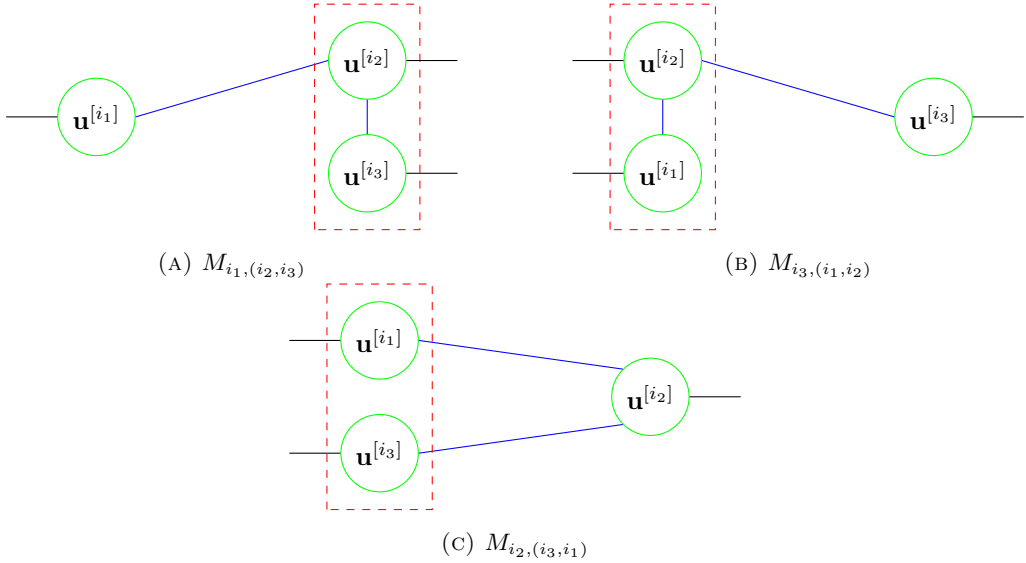
A more precise procedure is stated in Algorithm~\ref{alg: 3index} that is based on $R\in\mathbb{N}_+$ that is an approximation to the bond dimension.

 \begin{algorithm}[htb!]
\caption{Determine the order of three indices for TT format}
 \label{alg: 3index}
 \begin{algorithmic}[1]
 \State \textbf{Input:} Three indices $i_1,i_2,i_3\in\{1,2,\dots,d\}$, $R\in\mathbb{N}_+$ approximating the bond dimension, and an oracle computing entries of $\tT = \varphi(\tu{},\tau)$.
 \State Sample $y_i\in \{1,2,\dots,n_i\}$ for $i\in\{1,2,\dots,d\}\backslash\{i_1,i_2,i_3\}$.
 \State Construct matrices $M_{i_1,(i_2,i_3)}$, $M_{i_2,(i_3,i_1)}$, and $M_{i_3,(i_1,i_2)}$ via \eqref{M123_TT}, \eqref{M231_TT}, and \eqref{M312_TT}.
 \If {$\sigma_{R^2}(M_{i_1,(i_2,i_3)})>\max\left\{\sigma_{R^2}(M_{i_2,(i_3,i_1)}),\sigma_{R^2}(M_{i_3,(i_1,i_2)})\right\}$}
     \State Return $(i_2,i_1,i_3)$.
 \EndIf
 \If {$\sigma_{R^2}(M_{i_2,(i_3,i_1)})>\max\left\{\sigma_{R^2}(M_{i_3,(i_1,i_2)}),\sigma_{R^2}(M_{i_1,(i_2,i_3)})\right\}$}
     \State Return $(i_1,i_2,i_3)$.
 \EndIf
 \If {$\sigma_{R^2}(M_{i_3,(i_1,i_2)})>\max\left\{\sigma_{R^2}(M_{i_1,(i_2,i_3)}),\sigma_{R^2}(M_{i_2,(i_3,i_1)})\right\}$}
     \State Return $(i_2,i_3,i_1)$.
 \EndIf
 \end{algorithmic}
 \end{algorithm}
 
 Similarly, Algorithm~\ref{alg: 3index} can also be implemented without knowing $R$ in the noiseless case. The idea of building a final algorithm based on Algorithm~\ref{alg: 3index} is also similar to that of TR format and we would need the following definition.
 
 \begin{definition}\label{def:consistent_TT}
 Let $\tau\in S_d$ and let $(i_1,i_2,\dots,i_t)$ be a permutation of $t$ different indices in $\{1,2,\dots,d\}$, $3\leq t\leq d$. Then $(i_1,i_2,\dots,i_t)$ is said to be consistent with $\tau$ if $(i_{j_1}, i_{j_2}, i_{j_3})$ is of the correct order with respect to $\tau$ for any $1\leq j_1<j_2<j_3\leq t$.
 \end{definition}
 
 Similar to the TR case, we may recover the whole path by inserting indices one by one. 
 Suppose that for $3\leq t\leq d-1$, $(1,2,\dots,t)$ is permuted to $(i_1,i_2,\dots,i_t)$ that is consistent with $\tau$, and one aims to insert $t+1$ and keeps the consistency. For any $1\leq j_1<j_2\leq t$, the output of Algorithm~\ref{alg: 3index} with the three input indices being $t+1$, $i_{j_1}$, and $i_{j_2}$ indicates that whether $t+1$ should be placed before $i_{j_1}$, between $i_{j_1}$ and $i_{j_2}$, or after $i_{j_2}$. By using a similar idea as in the binary search, one could identify the correct position to insert $t+1$ by calling Algorithm~\ref{alg: 3index} for $O(\log n)$ times. Algorithm~\ref{alg: whole_train} displays the whole procedure.
 
 \begin{algorithm}[htb!]
\caption{Recover the underlying graph for TT format}
 \label{alg: whole_train}
 \begin{algorithmic}[1]
 \State \textbf{Input:} $R\in\mathbb{N}_+$ approximating the bond dimension and a oracle computing entries of $\tT = \varphi(\tu{},\tau)$.
 \State Let $(i_1,i_2,i_3)$ be the output of Algorithm~\ref{alg: 3index} with the input three indices being $1,2,3$.
 \For {$t = 3 : d-1$}
    \State $j_{\min}=0$, $j_{\max} = t+1$. \Comment{Use the idea of binary search to find a location in $(i_1,i_2,\dots,i_t)$ such that inserting $t+1$ maintains the consistency with $\tau$.}
    \While {$j_{\max}-j_{\min}\geq 2$}
        \If {$j_{\max}-j_{\min}\geq 3$}
            \State $j_1 = j_{\min} + \lfloor\frac{1}{3}\cdot (j_{\max}-j_{\min})\rfloor$, $j_2 = j_{\min} + \lfloor\frac{2}{3}\cdot (j_{\max}-j_{\min})\rfloor$.
        \ElsIf { $j_{\min} \geq 1$}
            \State $j_1 = j_{\min}$, $j_2 = j_{\min} + 1$.
        \Else 
            \State $j_1 = 1$, $j_2 = 2$.
        \EndIf
        \State Run Algorithm~\ref{alg: 3index} with input being $t+1$, $i_{j_1}$, and $i_{j_2}$.
        \If {the output of Algorithm~\ref{alg: 3index} is $(t+1, i_{j_1}, i_{j_2})$ or $(i_{j_2}, i_{j_1},t+1 )$}
            \State $j_{\max} = j_1$.
        \ElsIf {the output of Algorithm~\ref{alg: 3index} is $(i_{j_1},t+1,  i_{j_2})$ or $(i_{j_2},t+1 , i_{j_1})$}
            \State $j_{\min} = j_1$, $j_{\max} = j_2$.
        \Else
            \State $j_{\min} = j_2$.
        \EndIf
    \EndWhile
    \State $(i_1,i_2,\dots,i_{t+1}) = (i_1,i_2,\dots,i_{j_{\min}}, t+1, i_{j_{\max}}, i_{j_{\max}+1},\dots, i_t)$.
\EndFor
\State Return $\tau'\in S_d$ with $\tau'(j) = i_j$, $1\leq j\leq d$.
\end{algorithmic}
\end{algorithm}
 
Similar to Algorithm~\ref{alg: whole_ring}, Algorithm~\ref{alg: whole_train} also collects and combines partial information, and there are other approaches such as majority vote. In addition, the complexity of Algorithm~\ref{alg: whole_train} is also similar to that of Algorithm~\ref{alg: whole_ring}, which is polynomial in $d$. More specifically, implementing Algorithm~\ref{alg: whole_train} requires observing $O(d\log d\cdot n_{\max}^3)$ entries of $\tT = \varphi(\tu{},\tau)$ and computing the $R^2$-th largest singular value of $O(d\log d)$ matrices whose sizes are at most $n_{\max}\times n_{\max}^2$, with $n_{\max} = \max_{1\leq i\leq d} n_i$, leading to the overall complexity $O(d\log d\cdot n_{\max}^4)$.

\section{Analysis of the Algorithms}
\label{sec:theory}

In this section, we establish the theoretical guarantees for the algorithms displayed in Section~\ref{sec:alg}. In Section~\ref{sec:correct} we analyze the noiseless case and establish that Algorithm~\ref{alg: 4index}-\ref{alg: whole_train} recover the correct order unless the data lies in a measure-zero set with respect to the Lebesgue measure (note that $\calU_{\vec{r},\vec{n},\tau}^d$ is a Euclidean space). 
Section~\ref{sec:robust} considers the noisy case and provides high-probability guarantees when the observation is contaminated by Gaussian noise. 

\subsection{Recovery in the noiseless case}
\label{sec:correct}

We show that Algorithm~\ref{alg: 4index} returns the correct order of any four given indices when the observation is noiseless (Theorem~\ref{thm:TR}). This result further implies  Algorithm~\ref{alg: whole_ring} returns the correct complete order of all $d$ nodes (Corollary~\ref{cor:TR}).

Our analysis requires the assumption that the bond dimension $\vec{r}$ is relatively small compared to the physical dimension $\vec{n}$. Although this does not cover all tensors in TT/TR format, there exist some interesting tensors with small bond dimensions, such as the TR representing the Ising spin glass in \cite{khoo2021efficient} and the discrete Laplace with TT format in \cite{oseledets2011tensor}. In addition, we require that the minimal element in the bond dimension $\vec{r}$ is comparable with the maximal one. More precise statements of the assumptions can be found in Assumption~\ref{asp:exist_R_TR} for TR format (and in Assumption~\ref{asp:exist_R_TT} for TT format).

\begin{assumption}\label{asp:exist_R_TR}
The input $R\in\mathbb{N}_+$ in Algorithm~\ref{alg: 4index} and Algorithm~\ref{alg: whole_ring} satisfies
\begin{equation*}
    \min_{1\leq i\leq d}n_i\geq R^2> \max_{1\leq j\leq d} r_j,\quad\text{and}\quad \min_{1\leq j\leq d} r_j\geq R.
\end{equation*}
\end{assumption}

\begin{theorem}\label{thm:TR}
For tensor ring format, suppose that Assumption~\ref{asp:exist_R_TR} holds and one can compute the entries of $\mathbf{T} =\varphi(\tu{},\tau)$ exactly. Then for any $\tau\in S_d$ and any four indices $i_1,i_2,i_3,i_4\in \{1,2,\dots,d\}$, the output of Algorithm~\ref{alg: 4index} is of the correct order with respect to $\tau$ unless $\tu{}$ is in a measure-zero set of $\calU_{\vec{r},\vec{n},\tau}^d$.
 \end{theorem}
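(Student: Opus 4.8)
The plan is to reduce Theorem~\ref{thm:TR} to a statement about ranks of three matricizations and then show that, off a measure-zero set, these ranks take their ``generic'' values, which are distinct enough for Algorithm~\ref{alg: 4index} to pick out the correct one. First I would record the algebraic fact underlying the equation labeled \eqref{eq:rk_TR} (referenced in the remark): if $(i_1,i_2,i_3,i_4)$ is of the correct order with respect to $\tau$, then in the matricization that groups the two ``interleaved'' pairs together, the tensor ring contracts along four bond edges crossing the partition, so the rank is bounded by $\prod$ of the four relevant bond dimensions $r_j$; whereas in the two matricizations grouping ``consecutive'' pairs, only two bond edges cross, giving a rank bound that is a product of two $r_j$'s. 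Under Assumption~\ref{asp:exist_R_TR} we have $\max_j r_j < R^2 \le \min_i n_i$ and $R \le \min_j r_j$, so the two ``consecutive'' matricizations have rank at most $(\max_j r_j)^2 < R^4$, hence $\sigma_{R^4}$ of each of them is exactly $0$, while the ``interleaved'' matricization has rank at least $R^4$ as soon as it achieves its generic value (which is at least $R \cdot R \cdot R \cdot R$ coming from the four bond dimensions, each $\ge R$) — then $\sigma_{R^4}$ of that one is strictly positive and Algorithm~\ref{alg: 4index} returns $(i_1,i_3,i_2,i_4)$, the correct order.

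The heart of the argument is therefore: for the matricization $M_{(i_1,i_3),(i_2,i_4)}$ (in the correct-order case) — or more precisely whichever of the three matricizations corresponds to the ``interleaved'' grouping, since the correct order among $i_1,i_2,i_3,i_4$ is not known a priori — its rank is at least $R^4$ for all $\tu{}$ outside a measure-zero subset of $\calU_{\vec{r},\vec{n},\tau}^d$. The strategy for this is the standard one for generic rank statements: the entries of $M$ are polynomials in the entries of $\tu{}$ (indeed multilinear in the blocks), so each $R^4 \times R^4$ minor is a polynomial function on the Euclidean space $\calU_{\vec{r},\vec{n},\tau}^d$; the set where $\rk(M) < R^4$ is the common zero set of all these minors, which is either all of $\calU_{\vec{r},\vec{n},\tau}^d$ or a proper algebraic subset, hence measure zero. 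So it suffices to exhibit a single choice of $\tu{} \in \calU_{\vec{r},\vec{n},\tau}^d$ (equivalently, one setting of the core tensors and the sampled fibers $y_i$) for which the relevant $R^4 \times R^4$ minor is nonzero. I would construct such a witness explicitly: set all bond dimensions effectively to $R$ by restricting each core to an $R \times n_i \times R$ subblock, choose the four cores at positions $i_1,i_2,i_3,i_4$ to be, after fixing the external fibers of the remaining cores (which just multiplies in an invertible $R \times R$ matrix that can be absorbed), tensors whose slices span enough of the matrix space — for instance, take $\tu{i}(k,\cdot,k')$ to encode the indicator that picks out basis matrix $E_{k,k'}$ — so that the interleaved matricization becomes (a submatrix of) a Kronecker-type product of identity-like blocks and is visibly of rank $R^4$.

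There is one subtlety worth flagging: Algorithm~\ref{alg: 4index} compares $\sigma_{R^4}$ of all three matrices, so I also need the ``wrong'' matricizations to have $\sigma_{R^4} = 0$ deterministically (for every $\tu{}$, not just generically) — this is exactly the rank-$\le (\max_j r_j)^2 < R^4$ bound, which holds unconditionally under Assumption~\ref{asp:exist_R_TR}, so the only ``generic'' input to the argument is the lower bound on the rank of the interleaved matricization. Combining: outside the union over the (finitely many) choices of which pair is interleaved of the corresponding measure-zero bad sets — still a measure-zero set — exactly one of the three matrices has $\sigma_{R^4} > 0$ and the other two have $\sigma_{R^4} = 0$, so the first satisfied \texttt{if} in Algorithm~\ref{alg: 4index} returns the correct order. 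The main obstacle I anticipate is the explicit witness construction for the rank lower bound: one must be careful that restricting the non-selected cores to fixed external fibers $y_i$ does not collapse the bond space (it contributes a chain of $R\times R$ matrices whose product must be invertible, which is a generic/constructible condition and can be arranged), and that the ``trace'' closing the ring in the TR format does not annihilate the rank — handling the cyclic trace correctly, rather than a plain matrix product, is where the TR case differs from the (easier) TT case and where I would spend the most care.
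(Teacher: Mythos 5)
Your proposal follows essentially the same route as the paper's proof: a deterministic factorization bound showing the two consecutive-pair matricizations have rank strictly below $R^4$ (so their $\sigma_{R^4}$ vanish identically), combined with a generic lower bound on the interleaved matricization obtained by viewing its $R^4\times R^4$ minors as polynomials in the entries of $\tu{}$ and exhibiting an explicit basis-vector/indicator witness (with identity-like fixed fibers for the untouched cores and a careful trace computation closing the ring) — exactly the structure of Lemma~\ref{lem:rk_upbd_TR} and Lemma~\ref{lem:rk_lowbd_TR}. The only blemish is a labeling slip: when $(i_1,i_2,i_3,i_4)$ is the correct order, the interleaved matricization is $M_{(i_1,i_3),(i_2,i_4)}$ and Algorithm~\ref{alg: 4index} then returns $(i_1,i_2,i_3,i_4)$, not $(i_1,i_3,i_2,i_4)$; this does not affect the substance of the argument.
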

 
 \begin{corollary}\label{cor:TR}
 In the same setting as in Theorem~\ref{thm:TR}, for any $\tau\in S_d$, Algorithm~\ref{alg: whole_ring} can return some element in $\CPTR^d(\tau)$ unless $\tu{}$ is in a measure-zero set of $\calU_{\vec{r},\vec{n},\tau}^d$.
 \end{corollary}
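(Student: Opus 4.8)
The plan is to reduce Corollary~\ref{cor:TR} to Theorem~\ref{thm:TR} by a finiteness argument and then verify, combinatorially, that the insertion loop of Algorithm~\ref{alg: whole_ring} succeeds whenever all of its calls to Algorithm~\ref{alg: 4index} do. In a single run, Algorithm~\ref{alg: whole_ring} invokes Algorithm~\ref{alg: 4index} only on quadruples of indices from $\{1,2,\dots,d\}$, and each invocation also draws sampling indices $y_i$ from the finite sets $\{1,\dots,n_i\}$; hence only finitely many choices of quadruple together with sampled indices can occur. By Theorem~\ref{thm:TR}, for each such choice there is a measure-zero subset of $\calU_{\vec r,\vec n,\tau}^d$ off which Algorithm~\ref{alg: 4index} returns the correct order with respect to $\tau$. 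Let $N$ be the union of these finitely many measure-zero sets; then $N$ is measure-zero, and for $\tu{}\notin N$ every call to Algorithm~\ref{alg: 4index} made by Algorithm~\ref{alg: whole_ring} returns an order that is correct with respect to $\tau$. It therefore suffices to prove the deterministic claim: if every call to Algorithm~\ref{alg: 4index} returns a correct order, then the permutation $\tau'$ output by Algorithm~\ref{alg: whole_ring} lies in $\CPTR^d(\tau)$.

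For the deterministic claim I would induct on $t$ from $4$ to $d$, with the invariant that after the iteration for $t$ the working sequence $(i_1,\dots,i_t)$ is consistent with $\tau$ in the sense of Definition~\ref{def:consistent_TR}. The base case $t=4$ holds because the initial call of Algorithm~\ref{alg: 4index} on $1,2,3,4$ returns their correct order, which is exactly consistency for four indices. For the inductive step, suppose $(i_1,\dots,i_t)$ is consistent with $\tau$. Consistency says that the cyclic sequence $(i_1,\dots,i_t)$ agrees, up to rotation and reflection, with the cyclic order that $\tau^{-1}$ induces on these indices; equivalently, $(i_1,\dots,i_t)$ enumerates them monotonically along the loop associated with $\tau$. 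Consequently $t+1$ falls into exactly one of the $t$ ``slots'' of this cyclic enumeration (the slot after $i_t$ being identified with the slot before $i_1$), and inserting $t+1$ into that slot is the unique way to extend to a monotone enumeration of $\{i_1,\dots,i_t,t+1\}$. I would then show that the inner \texttt{while}-loop maintains the invariant that this target slot lies in the current window $[j_{\min},j_{\max}]$: each call of Algorithm~\ref{alg: 4index} on $(t+1,i_{j_1},i_{j_2},i_{j_3})$ returns the correct cyclic order of those four indices, which identifies precisely which of the three arcs cut out by $i_{j_1},i_{j_2},i_{j_3}$ contains the target slot, and the three conditional branches narrow $[j_{\min},j_{\max}]$ to that arc; once the window has width $1$ the slot is pinned down, and $t+1$ is inserted there.

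It remains to check that the extended sequence $(i_1,\dots,i_{j_{\min}},t+1,i_{j_{\max}},\dots,i_t)$ is consistent with $\tau$. Every quadruple avoiding $t+1$ is of correct order by the induction hypothesis. For a quadruple $\{t+1,i_a,i_b,i_c\}$, ``of correct order with respect to $\tau$'' is precisely the statement that these four indices, listed in the given order, traverse the loop monotonically; since the extended sequence is a monotone enumeration of all indices involved, so is each of its subsequences, and the quadruple is of correct order. Pushing the induction to $t=d$ shows $(i_1,\dots,i_d)$ is consistent with $\tau$, and then Proposition~\ref{prop:equiv_class_TR} applied to $\tau'$ defined by $\tau'(j)=i_j$ gives $\tau'\in\CPTR^d(\tau)$, which is the assertion of the corollary.

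The step I expect to be the main obstacle is the combinatorial verification in the second paragraph: formalizing ``slot'' and ``arc'' for points on a loop, checking that each conditional branch of the \texttt{while}-loop restricts the window to the arc containing the target slot, and correctly handling the boundary cases — the branches that set $j_1=1,j_2=t-1,j_3=t$ or $j_1=j_{\min},j_2=j_{\min}+1,j_3=j_{\max}$, the initialization $j_{\min}=1,j_{\max}=t+1$, and the wrap-around between the first and last positions of the linear sequence. This is not conceptually deep, but the case analysis is delicate because the underlying structure is a loop rather than a path.
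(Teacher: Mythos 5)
Your argument is correct and is essentially the paper's (implicit) proof: take the union of the finitely many measure-zero exceptional sets from Theorem~\ref{thm:TR} (one for each quadruple and each choice of the sampled indices $y_i$), and note that when every call to Algorithm~\ref{alg: 4index} returns a correct order, the insertion loop of Algorithm~\ref{alg: whole_ring} preserves consistency in the sense of Definition~\ref{def:consistent_TR}, so Proposition~\ref{prop:equiv_class_TR} gives $\tau'\in\CPTR^d(\tau)$. The window/arc case analysis you flag as the remaining work is routine bookkeeping that the paper likewise leaves to the reader.
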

 
For proving Theorem~\ref{thm:TR}, we assume $(i_1,i_2,i_3,i_4)$ is of the correct order with respect to $\tau$, $i_s = \tau(j_s)$ for $s=1,2,3,4$, and $1\leq j_1 <j_2 < j_3 < j_4\leq d$ without loss of generality. The following Lemma~\ref{lem:rk_upbd_TR} and Lemma~\ref{lem:rk_lowbd_TR}  establish the upper bounds of $\rk(M_{(i_1,i_2),(i_3,i_4)})$, $\rk(M_{(i_1,i_4),(i_2,i_3)})$ and a lower bound of $\rk(M_{(i_1,i_3),(i_2,i_4)})$, respectively.
 
 \begin{lemma}\label{lem:rk_upbd_TR}
 Suppose that $1\leq j_1 <j_2 < j_3 < j_4\leq d$ and that $i_s = \tau(j_s)$, $s=1,2,3,4$. Then for any $\tu{}\in\calU_{\vec{r},\vec{n},\tau}^d$, it holds that 
 \begin{equation}\label{upper_bd_TR_rk1}
     \rk(M_{(i_1,i_2),(i_3,i_4)})\leq \min_{j_2 + 1\leq j \leq j_3} r_j\cdot\min_{j_4+1\leq j \leq j_1} r_j,
 \end{equation}
and similarly that
 \begin{equation}\label{upper_bd_TR_rk2}
     \rk(M_{(i_1,i_4),(i_2,i_3)})\leq \min_{j_1+1\leq j \leq j_2} r_j\cdot\min_{j_3+1\leq j \leq j_4} r_j,
 \end{equation}
 where the index $j$ is understood up to $\text{mod }d$.
\end{lemma}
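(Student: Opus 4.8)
The plan is to read off the upper bounds directly from the cyclic-trace form of the tensor-ring entries and then \emph{cut the loop at its two cheapest bonds}. After the values $y_i$ for $i\notin\{i_1,i_2,i_3,i_4\}$ are fixed, every entry of $M_{(i_1,i_2),(i_3,i_4)}$ equals, by \eqref{eq:TR_entry_perm}, a cyclic trace $\tr(A_1A_2\cdots A_d)$, where $A_j=\tu{\tau(j)}(y_{\tau(j)})\in\bR^{r_j\times r_{j+1}}$ for $j\notin\{j_1,j_2,j_3,j_4\}$, while $A_{j_s}=\tu{\tau(j_s)}(x_{i_s})\in\bR^{r_{j_s}\times r_{j_s+1}}$ is the only factor carrying the free index $x_{i_s}$. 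Since $j_1<j_2<j_3<j_4$, the four variable factors occur around the loop in the cyclic order $A_{j_1},A_{j_2},A_{j_3},A_{j_4}$, and between two consecutive variable factors the intervening $A_j$'s multiply to a constant matrix (read as the identity of the matching bond size when no index lies in between).

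First I would record that the bond linking $A_{j-1}$ to $A_j$ has dimension $r_j$, so the bonds lying strictly between $A_{j_2}$ and $A_{j_3}$ are exactly $r_{j_2+1},\dots,r_{j_3}$ and those lying between $A_{j_4}$ and $A_{j_1}$ in the forward (mod-$d$) direction are exactly $r_{j_4+1},\dots,r_{j_1}$. Choose $p\in\{j_2+1,\dots,j_3\}$ and $q\in\{j_4+1,\dots,j_1\}$ attaining the two minima in \eqref{upper_bd_TR_rk1}, and cut the cyclic product at the bonds $p$ and $q$. This splits $A_1\cdots A_d$ into two arcs: one gives a matrix $X(x_{i_1},x_{i_2})\in\bR^{r_q\times r_p}$ running from bond $q$ through $A_{j_1},A_{j_2}$ to bond $p$, the other a matrix $Y(x_{i_3},x_{i_4})\in\bR^{r_p\times r_q}$ running from bond $p$ through $A_{j_3},A_{j_4}$ back to bond $q$; the first depends only on $(x_{i_1},x_{i_2})$ and the second only on $(x_{i_3},x_{i_4})$. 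By cyclicity of the trace the entry equals $\tr\bigl(X(x_{i_1},x_{i_2})\,Y(x_{i_3},x_{i_4})\bigr)=\sum_{a=1}^{r_q}\sum_{b=1}^{r_p}X_{ab}(x_{i_1},x_{i_2})\,Y_{ba}(x_{i_3},x_{i_4})$. Reading the right-hand side as a matrix product with inner index the pair $(a,b)$ exhibits $M_{(i_1,i_2),(i_3,i_4)}=UV$ with $U\in\bR^{(n_{i_1}n_{i_2})\times(r_pr_q)}$ and $V\in\bR^{(r_pr_q)\times(n_{i_3}n_{i_4})}$, hence $\rk(M_{(i_1,i_2),(i_3,i_4)})\le r_pr_q$, which is \eqref{upper_bd_TR_rk1}. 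The bound \eqref{upper_bd_TR_rk2} is obtained identically: in the cyclic order $A_{j_1},A_{j_2},A_{j_3},A_{j_4}$ the pair $\{A_{j_1},A_{j_4}\}$ is again contiguous, the candidate cut bonds between $A_{j_1}$ and $A_{j_2}$ are $r_{j_1+1},\dots,r_{j_2}$ and those between $A_{j_3}$ and $A_{j_4}$ are $r_{j_3+1},\dots,r_{j_4}$, which yields $\rk(M_{(i_1,i_4),(i_2,i_3)})\le\min_{j_1+1\le j\le j_2}r_j\cdot\min_{j_3+1\le j\le j_4}r_j$.

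The argument has essentially no analytic content, so I expect the main obstacle to be the bookkeeping rather than any genuine difficulty: keeping the mod-$d$ cyclic indexing straight, treating empty products of fixed factors as identity matrices of the matching bond dimension (so that the construction still works when some $j_{s+1}=j_s+1$), and checking that cutting the trace at two bonds really produces the factorization $\tr(XY)$ with $X\in\bR^{r_q\times r_p}$, $Y\in\bR^{r_p\times r_q}$. Once that decomposition is in place the rank bound is immediate, and no genericity is needed here — genericity enters only in the complementary lower bound on $\rk(M_{(i_1,i_3),(i_2,i_4)})$ in Lemma~\ref{lem:rk_lowbd_TR}.
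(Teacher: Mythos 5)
Your proposal is correct and follows essentially the same route as the paper: the paper also picks the minimizing bonds $j'\in\{j_2+1,\dots,j_3\}$ and $j''\in\{j_4+1,\dots,j_1\}$, cuts the cyclic product there, and writes the resulting matricization as a product $N_1N_2$ with inner dimension $r_{j'}r_{j''}$ (the only cosmetic difference is that the paper factors the larger matrix $\tilde M$ obtained before fixing the $y_i$'s and then passes to $M_{(i_1,i_2),(i_3,i_4)}$ as a submatrix, while you factor the down-sampled matrix directly). No gaps.
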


\begin{proof} 
We only prove \eqref{upper_bd_TR_rk1}. The proof of \eqref{upper_bd_TR_rk2} follows the same argument and is thus omitted. Set
 \begin{equation*}
     j' = \arg\min_{j_2+1\leq j \leq j_3} r_j, \quad \text{and}\quad j'' = \arg\min_{j_4+1\leq j \leq j_1} r_j.
 \end{equation*}
The main idea of the proof is to express the matricization of $\mathbf{T} =\varphi(\tu{},\tau)$, with two indices grouping $x_{\tau(j'')},x_{\tau(j''+1)},\dots,x_{\tau(j'-1)}$ and $x_{\tau(j')},x_{\tau(j'+1)},\dots,x_{\tau(j''-1)}$, as the product of two matrices through reindexing of the TR format. The sizes of two matrices are $(n_{\tau(j'')}n_{\tau(j''+1)}\cdots n_{\tau(j'-1)})\times (r_{j'} r_{j''})$ and $(r_{j'}r_{j''})\times (n_{\tau(j')}n_{\tau(j'+1)}\cdots n_{\tau(j''-1)})$, respectively. More specifically, consider
\begin{equation*}
    \Tilde{M}\in\bR^{(n_{\tau(j'')}n_{\tau(j''+1)}\cdots n_{\tau(j'-1)})\times (n_{\tau(j')}n_{\tau(j'+1)}\cdots n_{\tau(j''-1)})},
\end{equation*}
defined via
 \begin{align*}
     \Tilde{M}\bigl((x_{\tau(j'')}, x_{\tau(j''+1)},& \dots,x_{\tau(j'-1)}), (x_{\tau(j')},x_{\tau(j'+1)},\dots,x_{\tau(j''-1)})\bigr)\\
     &=\tau(\tu{},\tau)(x_1,x_2,\dots,x_d)\\
     &=\tr(\tu{\tau(1)}(x_{\tau(1)})\tu{\tau(2)}(x_{\tau(2)})\cdots \tu{\tau(d)}(x_{\tau(d)}))\\
     &=\tr \left(\tu{\tau(j'')}(x_{\tau(j'')})\tu{\tau(j''+1)}(x_{\tau(j''+1)})\cdots \tu{\tau(j'-1)}(x_{\tau(j'-1)})\right.\\
     &\qquad\ \left.\cdot \tu{\tau(j')}(x_{\tau(j'}))\tu{\tau(j'+1)}(x_{\tau(j'+1)})\cdots \tu{\tau(j''-1)}(x_{\tau(j''-1)})\right),
 \end{align*}
 for $1\leq x_i\leq n_i$, $1\leq i\leq d$. Note that 
 \begin{equation*}
     \tu{\tau(j'')}(x_{\tau(j'')})\tu{\tau(j''+1)}(x_{\tau(j''+1)})\cdots \tu{\tau(j'-1)}(x_{\tau(j'-1)})\in\bR^{r_{j''}\times r_{j'}},
 \end{equation*}
 and
 \begin{equation*}
     \tu{\tau(j')}(x_{\tau(j'}))\tu{\tau(j'+1)}(x_{\tau(j'+1)})\cdots \tu{\tau(j''-1)}(x_{\tau(j''-1)})\in\bR^{r_{j'}\times r_{j''}}.
 \end{equation*}
 Let us define two matrices
 \begin{equation*}
     N_1\in\bR^{(n_{\tau(j'')}n_{\tau(j''+1)}\cdots n_{\tau(j'-1)})\times (r_{j'} r_{j''})},
 \end{equation*}
 and
 \begin{equation*}
     N_2\in\bR^{(r_{j'}r_{j''})\times (n_{\tau(j')}n_{\tau(j'+1)}\cdots n_{\tau(j''-1)})},
 \end{equation*}
 as
 \begin{multline*}
     N_1\bigl((x_{\tau(j'')}, x_{\tau(j''+1)}, \dots,x_{\tau(j'-1)}), ( k_{j'}, k_{j''})\bigr) \\
      := \left(\tu{\tau(j'')}(x_{\tau(j'')})\tu{\tau(j''+1)}(x_{\tau(j''+1)})\cdots \tu{\tau(j'-1)}(x_{\tau(j'-1)})\right)_{k_{j''}, k_{j'}}
 \end{multline*}
 and
 \begin{multline*}
     N_2\bigl((k_{j'}, k_{j''}), (x_{\tau(j')},x_{\tau(j'+1)},\dots,x_{\tau(j''-1)})\bigr) \\
     := \left(\tu{\tau(j')}(x_{\tau(j'}))\tu{\tau(j'+1)}(x_{\tau(j'+1)})\cdots \tu{\tau(j''-1)}(x_{\tau(j''-1)})\right)_{k_{j'}, k_{j''}},
 \end{multline*}
 where $1\leq x_i\leq n_i$, $1\leq i\leq d$, and $1\leq k_{j'}\leq r_{j'}$, $1\leq k_{j''}\leq r_{j''}$. Therefore, it holds that
 \begin{align*}
     & \Tilde{M}\bigl((x_{\tau(j'')}, x_{\tau(j''+1)}, \dots,x_{\tau(j'-1)}), (x_{\tau(j')},x_{\tau(j'+1)},\dots,x_{\tau(j''-1)})\bigr)\\
     & = \tr \left(\tu{\tau(j'')}(x_{\tau(j'')})\tu{\tau(j''+1)}(x_{\tau(j''+1)})\cdots \tu{\tau(j'-1)}(x_{\tau(j'-1)})\right.\\
     &\qquad\qquad\qquad \left.\cdot \tu{\tau(j')}(x_{\tau(j'}))\tu{\tau(j'+1)}(x_{\tau(j'+1)})\cdots \tu{\tau(j''-1)}(x_{\tau(j''-1)})\right)\\
     &=\sum_{k_{j'}=1}^{r_{j'}} \sum_{k_{j''}=1}^{r_{j''}} \left(\tu{\tau(j'')}(x_{\tau(j'')})\tu{\tau(j''+1)}(x_{\tau(j''+1)})\cdots \tu{\tau(j'-1)}(x_{\tau(j'-1)})\right)_{k_{j''}, k_{j'}} \\
     &\qquad\qquad\qquad\cdot \left(\tu{\tau(j')}(x_{\tau(j'}))\tu{\tau(j'+1)}(x_{\tau(j'+1)})\cdots \tu{\tau(j''-1)}(x_{\tau(j''-1)})\right)_{k_{j'}, k_{j''}} \\
     &= N_1\bigl((x_{\tau(j'')}, x_{\tau(j''+1)}, \dots,x_{\tau(j'-1)}), : \bigr) N_2\bigl(:, (x_{\tau(j')},x_{\tau(j'+1)},\dots,x_{\tau(j''-1)})\bigr).
 \end{align*}
 This implies that $\Tilde{M} = N_1 N_2$ and hence that $\rk(\Tilde{M})\leq r_{j'}r_{j''}$, which implies \eqref{upper_bd_TR_rk1} as $M_{(i_1,i_2),(i_3,i_4)}$ is a submatrix of $\Tilde{M}$. 
 \end{proof}
 
 \begin{lemma}\label{lem:rk_lowbd_TR}
Suppose that $1\leq j_1 <j_2 < j_3 < j_4\leq d$ and that $i_s = \tau(j_s)$, $s=1,2,3,4$. Let $R_1,R_2,R_3,R_4$ satisfy
 \begin{equation*}
     R_s\leq \min_{j_s+1\leq j \leq j_{s+1}} r_j,\quad\text{and}\quad n_{i_s}\geq R_{s-1} R_s,\quad 1\leq s\leq 4,
 \end{equation*}
 where the subscript of $R_\cdot$ and $j_\cdot$ is understood via $\text{mod }4$. Then the following holds unless $\tu{}$ is in a measure-zero set of $\calU_{\vec{r},\vec{n},\tau}^d$:
 \begin{equation}\label{lower_bd_TR_rk}
     \rk(M_{(i_1,i_3),(i_2,i_4)})\geq R_1 R_2 R_3 R_4.
 \end{equation}
 \end{lemma}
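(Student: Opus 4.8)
\emph{Proof proposal.} The plan is to realize $M_{(i_1,i_3),(i_2,i_4)}$ as a matrix whose entries are polynomials in the coordinates of $\tu{}$ and then show that some fixed $(R_1R_2R_3R_4)\times(R_1R_2R_3R_4)$ minor of it is not the zero polynomial. Once this is done, the zero locus of that minor is a proper algebraic subset of the Euclidean space $\calU_{\vec{r},\vec{n},\tau}^d$, hence of Lebesgue measure zero, and outside it the rank is at least $R_1R_2R_3R_4$. So everything reduces to exhibiting a single $\tu{}\in\calU_{\vec{r},\vec{n},\tau}^d$ for which $\rk\bigl(M_{(i_1,i_3),(i_2,i_4)}\bigr)\ge R_1R_2R_3R_4$.

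First I would collapse the problem to a four-node ring. With the off-indices frozen at $y_i$, absorb the slices along the arc of the loop strictly between $\tu{\tau(j_s)}$ and $\tu{\tau(j_{s+1})}$ into one matrix
\[
 A_s:=\tu{\tau(j_s+1)}(y_{\tau(j_s+1)})\cdots\tu{\tau(j_{s+1}-1)}(y_{\tau(j_{s+1}-1)})\in\bR^{r_{j_s+1}\times r_{j_{s+1}}},
\]
(an empty product being the identity; all subscripts read mod $d$, and $j_5:=j_1$), so that \eqref{eq:TR_entry_perm} gives
\[
 M_{(i_1,i_3),(i_2,i_4)}\bigl((x_{i_1},x_{i_3}),(x_{i_2},x_{i_4})\bigr)=\tr\bigl(\tu{i_1}(x_{i_1})A_1\tu{i_2}(x_{i_2})A_2\tu{i_3}(x_{i_3})A_3\tu{i_4}(x_{i_4})A_4\bigr).
\]
This is the $\{i_1,i_3\}\mid\{i_2,i_4\}$ matricization of a four-node tensor ring, exactly as in Figure~\ref{fig:intuition_TR}(C). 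For the witness I would set $R_0:=R_4$ and choose the frozen slices along arc $s$ so that the top-left $R_s\times R_s$ block of $A_s$ equals $I_{R_s}$ (legal because $R_s\le\min_{j_s+1\le j\le j_{s+1}}r_j$, and automatic when the arc is empty), and choose the four free slices so that each $\tu{i_s}(\cdot)$ is supported in its top-left $R_{s-1}\times R_s$ block (legal because $R_{s-1}\le r_{j_s}$ and $R_s\le r_{j_s+1}$, both consequences of the hypothesis) and so that the $n_{i_s}\ge R_{s-1}R_s$ matrices $\{\tu{i_s}(x_{i_s})\}_{x_{i_s}}$ realize, among them, all $R_{s-1}R_s$ elementary matrices in that block.

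For this choice, a telescoping computation — each $A_s$ acts as the identity on the coordinate $R_s$-subspace carrying the neighboring slices — shows
\[
 M_{(i_1,i_3),(i_2,i_4)}\bigl((x_{i_1},x_{i_3}),(x_{i_2},x_{i_4})\bigr)=\tr\bigl(\widetilde U_1(x_{i_1})\widetilde U_2(x_{i_2})\widetilde U_3(x_{i_3})\widetilde U_4(x_{i_4})\bigr),
\]
where $\widetilde U_s(x_{i_s})\in\bR^{R_{s-1}\times R_s}$ is the distinguished block of $\tu{i_s}(x_{i_s})$. Expanding the trace over bond indices $b_s\in\{1,\dots,R_s\}$ and splitting the row pair $(x_{i_1},x_{i_3})$ from the column pair $(x_{i_2},x_{i_4})$ yields a factorization $M_{(i_1,i_3),(i_2,i_4)}=LR^{\top}$ in which $L$ and $R$ both have $R_1R_2R_3R_4$ columns, with $L_{(x_{i_1},x_{i_3}),(b_1,b_2,b_3,b_4)}=[\widetilde U_1(x_{i_1})]_{b_4 b_1}[\widetilde U_3(x_{i_3})]_{b_2 b_3}$ and $R_{(x_{i_2},x_{i_4}),(b_1,b_2,b_3,b_4)}=[\widetilde U_2(x_{i_2})]_{b_1 b_2}[\widetilde U_4(x_{i_4})]_{b_3 b_4}$. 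Regrouping the column index as $\bigl((b_4,b_1),(b_2,b_3)\bigr)$, $L$ becomes a Kronecker product of the map $x_{i_1}\mapsto\widetilde U_1(x_{i_1})$ — which has rank $R_4R_1$ since its slices span $\bR^{R_4\times R_1}$ — with the map $x_{i_3}\mapsto\widetilde U_3(x_{i_3})$ of rank $R_2R_3$; hence $\rk(L)=R_1R_2R_3R_4$, and likewise $\rk(R)=R_1R_2R_3R_4$. Since $L$ and $R$ have full column rank and the inner dimension of $LR^{\top}$ equals $R_1R_2R_3R_4$, we get $\rk\bigl(M_{(i_1,i_3),(i_2,i_4)}\bigr)=R_1R_2R_3R_4$ at this $\tu{}$, so the associated maximal minor is not the zero polynomial, and the measure-zero conclusion follows.

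I expect the main obstacle to be bookkeeping rather than a deep idea: one must verify that grouping the columns by $\{i_2,i_4\}$ genuinely cuts the loop in four places — this is where the four factors $R_1R_2R_3R_4$ come from, in contrast to the two factors in Lemma~\ref{lem:rk_upbd_TR} — and one must carefully track the cyclic indexing mod $d$ and mod $4$, the degenerate case of empty arcs between consecutive $j_s$'s, and the dimension inequalities $R_{s-1}\le r_{j_s}$, $R_s\le r_{j_s+1}$, $n_{i_s}\ge R_{s-1}R_s$ that make the explicit slice choices admissible. The linear-algebra fact that full-column-rank $L$ and $R$ with matching inner dimension give $\rk(LR^{\top})$ equal to that inner dimension, and the passage from "a polynomial does not vanish at one point" to "its zero set is Lebesgue-null", are both routine.
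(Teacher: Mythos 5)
Your proposal is correct and takes essentially the same route as the paper: reduce the measure-zero claim to exhibiting a single witness $\tu{}$ with a nonvanishing maximal minor, and build that witness from elementary-matrix slices at the four free nodes together with partial-identity slices along the frozen arcs. The only difference is cosmetic — you certify the rank via the Kronecker-structured $LR^{\top}$ factorization with full-column-rank factors, whereas the paper computes the matricization entries directly as $\delta_{q_1,p_2}\delta_{q_2,p_3}\delta_{q_3,p_4}\delta_{q_4,p_1}$ and reads off the rank from that permutation-like matrix.
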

 
\begin{remark}
The main technique used in the proof of Lemma~\ref{lem:rk_lowbd_TR} is to establish the equivalence between the points in $\calU_{\vec{r},\vec{n},\tau}^d$ violating \eqref{lower_bd_TR_rk} with common roots of a class of polynomials. Note that the measure of the root set of a non-zero polynomial is zero, one only needs to show that some polynomial is non-zero, for which constructing a point that is not a common root suffices. 
\end{remark}

\begin{proof}[Proof of Lemma~\ref{lem:rk_lowbd_TR}]
One has $ n_{i_1} n_{i_3}\geq \Tilde{R}$ and $n_{i_2} n_{i_4}\geq \Tilde{R}$ with $\Tilde{R} = R_1 R_2 R_3 R_4$. We denote $N=\binom{n_1 n_2}{\Tilde{R}}\cdot\binom{n_3 n_4}{\Tilde{R}}$, which is the number of $R\times R$ minors in a $n_1 n_2\times n_3 n_4$ matrix. Consider
 \begin{equation*}
     \begin{split}
         f_\ell: \calU_{\vec{r},\vec{n},\tau}^d &\rightarrow\qquad\qquad \bR,\\
         \tu{}\ \ \ &\mapsto \det(M_{(i_1,i_3),(i_2,i_4)}^{(\ell)}),
     \end{split}
 \end{equation*}
 where $M_{(i_1,i_3),(i_2,i_4)}^{(\ell)}$ denotes the $\ell$-th $R\times R$ minor of $M_{(i_1,i_3),(i_2,i_4)}$, for $1\leq \ell\leq N$. Observe that $f_\ell$ is a homogeneous polynomial over entries of $\tu{}$. 
 
 For $\tu{}\in\calU_{\vec{r},\vec{n},\tau}^d$, the corresponding $M_{(i_1,i_3),(i_2,i_4)}$ satisfies $\rk(M_{(i_1,i_3),(i_2,i_4)})<R$ if and only if
 \begin{equation*}
     \tu{}\in \mathbb{V}(f_1,f_2,\dots,f_N)\subset\calU_{\vec{r},\vec{n},\tau}^d,
 \end{equation*}
 where $\mathbb{V}(f_1,f_2,\dots,f_N)$ is the zero locus of the ideal generated by $\{f_1,f_2,\dots,f_N\}$, i.e., the set of common roots of $f_1,f_2,\dots,f_N$. To prove that $\mathbb{V}(f_1,f_2,\dots,f_N)$ is of measure zero, it suffices to show that at least one of $f_1,f_2,\dots,f_N$ is a non-zero polynomial. Therefore, one only needs to construct a $\tu{}\in\calU_{\vec{r},\vec{n},\tau}^d$ satisfying $\tu{}\notin \mathbb{V}(f_1,f_2,\dots,f_N)$, i.e., the rank of the matrix $M_{(i_1,i_3),(i_2,i_4)}$ associated with $\tu{}$ is at least $\Tilde{R}$.
 
 Let us then construct such a $\tu{}=\left(\tu{1},\tu{2}, \dots, \tu{d}\right)$. We denote $e_{p,r}$ as the vector in $\bR^r$ with the $p$-th entry being $1$ and other entries being $0$. Set
 \begin{multline*}
     \tu{\tau(j_s)}(x_{\tau(j_s)}) \\
     =\begin{cases} e_{p,r_{j_s}} e_{q,r_{j_s + 1}}^\top,&\text{if}\ x_{\tau(j_s)} = (p - 1) R_s +q, \ 1\leq p\leq R_{s-1},\ 1\leq q \leq R_s,\\
     0_{r_{j_s}\times r_{j_s+1}}, &\text{if}\  R_{s-1} R_s < x_{\tau(j_s)} \leq n_{i_s},\end{cases}
 \end{multline*}
 for $s=1,2,3,4$, and
 \begin{equation*}
     \tu{i}(y_i) = \sum_{p=1}^{R_s} e_{p, r_{\tau^{-1}(i)}} e_{p, r_{\tau^{-1}(i)+1}}^\top,
 \end{equation*}
 for $i\in\{1,2,\dots,d\}\backslash\{i_1,i_2,i_3,i_4\}$, where $s$ is the unique index in $\{1,2,3,4\}$ such that $j_s< \tau^{-1}(i)<j_{s+1}$. Then we can compute for $x_{\tau(j_s)} = (p_s - 1) R_s +q_s$, where $1\leq p_s\leq R_{s-1}$, $1\leq q_s \leq R_s$, and $1\leq s\leq 4$, that
 \begin{align*}
     & M_{(i_1,i_3),(i_2,i_4)}((x_{i_1},x_{i_3}),(x_{i_32},x_{i_4}))\\
     & \quad =   M_{(i_1,i_3),(i_2,i_4)}((x_{\tau(j_1)},x_{\tau(j_3)}),(x_{\tau(j_2)},x_{\tau(j_4)})) \\
     & \quad =  \tr(\prod_{s-1}^4\left(\tu{\tau(j_s)}(x_{\tau(j_s)})\tu{\tau(j_s+1)}(y_{\tau(j_s+1)})\cdots \tu{\tau(j_{s+1}-1)}(y_{\tau(j_{s+1}-1)})\right))\\
     & \quad =  \tr(\prod_{s-1}^4\left(e_{p_s,r_{j_s}} e_{q_s,r_{j_s + 1}}^\top\prod_{j=j_s+1}^{j_{s+1}-1}\sum_{p=1}^{R_s} e_{p, r_j} e_{p, r_{j+1}}^\top\right))\\
     & \quad = \tr \left(e_{p_1, r_{j_1}}e_{q_1,r_{j_1+1}}^\top\left(\sum_{p=1}^{R_1} e_{p, r_{j_1+1}} e_{p, r_{j_2}}^\top\right) e_{p_2, r_{j_2}}e_{q_2,r_{j_2+1}}^\top\left(\sum_{p=1}^{R_2} e_{p, r_{j_2+1}} e_{p, r_{j_3}}^\top\right)\right.\\
     &\qquad\qquad\left.\cdot e_{p_3, r_{j_3}}e_{q_3,r_{j_3+1}}^\top\left(\sum_{p=1}^{R_3} e_{p, r_{j_3+1}} e_{p, r_{j_4}}^\top\right) e_{p_4, r_{j_4}}e_{q_4,r_{j_4+1}}^\top\left(\sum_{p=1}^{R_4} e_{p, r_{j_4+1}} e_{p, r_{j_1}}^\top\right)\right) \\
     & \quad =  \prod_{s=1}^4 e_{q_s,r_{j_s+1}}^\top\left(\sum_{p=1}^{R_s} e_{p, r_{j_s+1}} e_{p, r_{j_{s+1}}}^\top\right) e_{p_{s+1}, r_{j_{s+1}}}\\
     & \quad =  \delta_{q_1, p_2}\delta_{q_2, p_3}\delta_{q_3, p_4}\delta_{q_4, p_1},
 \end{align*}
where $\delta_{\cdot,\cdot}$ is the Kronecker delta function: $\delta_{a,b} = 1$ if $a=b$; $\delta_{a,b}=0$ if $a\neq b$. Note that 
 \begin{equation*}
     M_{(i_1,i_3),(i_2,i_4)}((x_{i_1},x_{i_3}),(x_{i_2},x_{i_4}))= M_{(i_1,i_3),(i_2,i_4)}((x_{\tau(j_1)},x_{\tau(j_3)}),(x_{\tau(j_2)},x_{\tau(j_4)}))=0,
 \end{equation*}
 if there exists $s\in\{1,2,3,4\}$ such that $x_{\tau(j_s)} > R_{s-1} R_s$. Therefore, we obtain that
 \begin{equation*}
     \rk(M_{(i_1,i_3),(i_2,i_4)})=R_1 R_2 R_3 R_4 = \Tilde{R},
 \end{equation*}
 which completes the proof.
\end{proof}

\begin{proof}[Proof of Theorem~\ref{thm:TR}]
In Lemma~\ref{lem:rk_upbd_TR} and Lemma~\ref{lem:rk_lowbd_TR}, we have shown that if Assumption~\ref{asp:exist_R_TR} holds and $(i_1,i_2,i_3,i_4)$ is of the correct order with respect to $\tau$, then almost surely,
 \begin{equation}\label{eq:rk_TR}
     \rk(M_{(i_1,i_3),(i_2,i_4)}) \geq R^4 >\max\left\{\rk(M_{(i_1,i_2),(i_3,i_4)}), \rk(M_{(i_1,i_4),(i_2,i_3)})\right\}.
\end{equation}
This guarantees the correctness of Algorithm~\ref{alg: 4index}.
\end{proof}

The correctness of Algorithms~\ref{alg: 3index} and \ref{alg: whole_train} can be established with similar proof techniques. We state the result in Theorem~\ref{thm:TT} and Corollary~\ref{cor:TT} and defer the proof of Theorem~\ref{thm:TT} to the appendix.

\begin{assumption}\label{asp:exist_R_TT}
 There exists some $R\in\mathbb{N}_+$ such that
 \begin{equation*}
     \min\{n_{\tau(1)},n_{\tau(d)}\}\geq R,\quad \min_{2\leq j\leq d-1}n_{\tau(j)}\geq R^2 >\max_{2\leq j\leq d} r_j\geq \min_{2\leq j\leq d} r_j\geq R.
 \end{equation*}
 \end{assumption}

 \begin{theorem}\label{thm:TT}
 For tensor train format, suppose that Assumption~\ref{asp:exist_R_TT} holds and one can compute the entries of $\mathbf{T} =\varphi(\tu{},\tau)$ exactly. Then for any $\tau\in S_d$ and any three indices $i_1,i_2,i_3\in \{1,2,\dots,d\}$, the output of Algorithm~\ref{alg: 3index} is of the correct order with respect to $\tau$ unless $\tu{}$ is in a measure-zero set of $\calU_{\vec{r},\vec{n},\tau}^d$.
 \end{theorem}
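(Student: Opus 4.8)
The plan is to transcribe the proof of Theorem~\ref{thm:TR} to the path setting, replacing the ``two cuts versus four cuts'' dichotomy along the loop by a ``one cut versus two cuts'' dichotomy along the path. Fix three distinct indices $i_1,i_2,i_3$. Since the three tests in Algorithm~\ref{alg: 3index} are symmetric under relabeling which of the indices plays the role of the ``middle'' node, we may assume without loss of generality that $(i_1,i_2,i_3)$ is of the correct order with respect to $\tau$ and that $i_s=\tau(j_s)$ with $1\le j_1<j_2<j_3\le d$; it then suffices to prove that the algorithm returns $(i_1,i_2,i_3)$.

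First I would establish the analogue of Lemma~\ref{lem:rk_upbd_TR}. Cutting the path at a single bond lying between positions $j_1$ and $j_2$ (respectively between $j_2$ and $j_3$), collapsing the fixed cores on each side into row/column vectors (using the boundary conventions $r_1=r_{d+1}=1$ at the two ends), and writing the matricization as a product of two matrices with inner dimension a single bond dimension $r_j$ gives
\[
\rk\bigl(M_{i_1,(i_2,i_3)}\bigr)\le \min_{j_1+1\le j\le j_2} r_j ,
\qquad
\rk\bigl(M_{i_3,(i_1,i_2)}\bigr)\le \min_{j_2+1\le j\le j_3} r_j .
\]
Because $j_1\ge 1$ forces $j_1+1\ge 2$ and $j_3\le d$, every bond index occurring above lies in $\{2,\dots,d\}$, so by Assumption~\ref{asp:exist_R_TT} both right-hand sides are $<R^2$; hence $\sigma_{R^2}$ of these two matrices vanishes.

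Next I would prove the analogue of Lemma~\ref{lem:rk_lowbd_TR}, which is the crux: if $R_1\le\min_{j_1+1\le j\le j_2}r_j$, $R_2\le\min_{j_2+1\le j\le j_3}r_j$, and $n_{i_1}\ge R_1$, $n_{i_2}\ge R_1R_2$, $n_{i_3}\ge R_2$, then $\rk(M_{i_2,(i_3,i_1)})\ge R_1R_2$ unless $\tu{}$ lies in a measure-zero subset of $\calU_{\vec{r},\vec{n},\tau}^d$. As before, the bad set is the common zero locus of the $(R_1R_2)\times(R_1R_2)$ minors of $M_{i_2,(i_3,i_1)}$, each a polynomial in the entries of $\tu{}$, so it is enough to exhibit one $\tu{}\in\calU_{\vec{r},\vec{n},\tau}^d$ for which the rank bound holds. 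I would construct it explicitly by choosing the fixed intermediate cores so that the ``left block'' $A_0\,\tu{\tau(j_1)}(x_{i_1})\,A_1\in\bR^{1\times r_{j_2}}$ runs over the standard basis vectors $e_1,\dots,e_{R_1}$ as $x_{i_1}$ ranges over $\{1,\dots,R_1\}$ (here $A_0$ is the fixed left product, trivial when $j_1=1$), the ``right block'' $A_2\,\tu{\tau(j_3)}(x_{i_3})\,A_3\in\bR^{r_{j_2+1}\times 1}$ runs over $e_1,\dots,e_{R_2}$ (with $A_3$ trivial when $j_3=d$), and the middle slice is $\tu{\tau(j_2)}(x_{i_2})=e_{a,r_{j_2}}e_{b,r_{j_2+1}}^{\top}$ when $x_{i_2}=(a-1)R_2+b$ with $1\le a\le R_1$, $1\le b\le R_2$, and zero otherwise. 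A direct computation then yields $M_{i_2,(i_3,i_1)}\bigl(x_{i_2},(x_{i_3},x_{i_1})\bigr)=\delta_{x_{i_2},\,(x_{i_1}-1)R_2+x_{i_3}}$ on the relevant sub-block and zero elsewhere, so that sub-block is a permutation matrix of size $R_1R_2$. Taking $R_1=R_2=R$ is legitimate: the internal bonds satisfy $r_j\ge R$, and by Assumption~\ref{asp:exist_R_TT} one has $n_{i_2}\ge R^2$ (since $2\le j_2\le d-1$) and $n_{i_1},n_{i_3}\ge R$ regardless of whether $j_1=1$ or $j_3=d$; this yields $\rk(M_{i_2,(i_3,i_1)})\ge R^2$ almost surely.

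Combining the two lemmas, almost surely $\rk(M_{i_2,(i_3,i_1)})\ge R^2>\max\{\rk(M_{i_1,(i_2,i_3)}),\rk(M_{i_3,(i_1,i_2)})\}$, hence $\sigma_{R^2}(M_{i_2,(i_3,i_1)})>0=\sigma_{R^2}(M_{i_1,(i_2,i_3)})=\sigma_{R^2}(M_{i_3,(i_1,i_2)})$, so the second conditional of Algorithm~\ref{alg: 3index} is the one that triggers and the output is $(i_1,i_2,i_3)$, which is of the correct order; by symmetry this covers an arbitrary input triple, and Corollary~\ref{cor:TT} then follows from Proposition~\ref{prop:equiv_class_TT} together with the binary-search structure of Algorithm~\ref{alg: whole_train}. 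I expect the main obstacle to be the bookkeeping in the explicit construction for the lower-bound lemma, in particular checking that the boundary conventions $r_1=r_{d+1}=1$ and the edge cases $j_1=1$ or $j_3=d$ do not disrupt the selector/transfer construction, and verifying that the constructed $\tu{}$ genuinely lies in $\calU_{\vec{r},\vec{n},\tau}^d$ with the asserted rank so that the relevant minor is a nonzero polynomial; the remainder is a routine adaptation of the tensor-ring proof.
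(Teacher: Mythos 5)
Your proposal is correct and follows essentially the same route as the paper: an upper-bound lemma obtained by cutting the path at a single bond between $j_1,j_2$ (resp.\ $j_2,j_3$) and factoring the matricization through a single bond dimension, plus a lower-bound lemma proved by the minors-as-polynomials measure-zero argument with an explicit basis-vector (selector/partial-identity) construction giving a permutation sub-block of size $R_1R_2$, and then the comparison $\rk(M_{i_2,(i_3,i_1)})\ge R^2>\max\{\rk(M_{i_1,(i_2,i_3)}),\rk(M_{i_3,(i_1,i_2)})\}$ with $R_1=R_2=R$ under Assumption~\ref{asp:exist_R_TT}. Your edge-case checks ($j_1=1$, $j_3=d$, boundary bonds $r_1=r_{d+1}=1$) match what the paper's construction handles, so no gap remains.
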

 
 \begin{corollary}\label{cor:TT}
 In the same setting as in Theorem~\ref{thm:TT}, for any $\tau\in S_d$, Algorithm~\ref{alg: whole_train} can return some element in $\CPTT^d(\tau)$ unless $\tu{}$ is in a measure-zero set of $\calU_{\vec{r},\vec{n},\tau}^d$.
 \end{corollary}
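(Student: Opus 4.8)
The plan is to transcribe the tensor ring argument (Theorem~\ref{thm:TR} together with Lemmas~\ref{lem:rk_upbd_TR} and \ref{lem:rk_lowbd_TR}) to the path, with ``four indices'' replaced by ``three'' and the distinguished pair of the ring replaced by the \emph{middle} one of the three indices on the path. Since Corollary~\ref{cor:TT} rests on Theorem~\ref{thm:TT}, whose proof the text defers, I first sketch the latter. Fix $\tau\in S_d$ and the three input indices; exactly one of them, the one whose $\tau^{-1}$-image is the median, lies between the other two on the path, and by the symmetry of Algorithm~\ref{alg: 3index} I may relabel it as $i_2$, write $i_s=\tau(j_s)$, and assume $1\le j_1<j_2<j_3\le d$. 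Then $(i_1,i_2,i_3)$ is of the correct order, and it suffices to show Algorithm~\ref{alg: 3index} returns it outside a measure-zero set of $\tu{}$.

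The first ingredient is the analog of Lemma~\ref{lem:rk_upbd_TR}: for every $\tu{}\in\calU_{\vec{r},\vec{n},\tau}^d$ one has $\rk(M_{i_1,(i_2,i_3)})\le\min_{j_1+1\le j\le j_2}r_j$ and $\rk(M_{i_3,(i_1,i_2)})\le\min_{j_2+1\le j\le j_3}r_j$. Indeed, all indices other than $i_1,i_2,i_3$ are frozen to the sampled values, so cutting the chain $\tu{\tau(1)}(x_{\tau(1)})\cdots\tu{\tau(d)}(x_{\tau(d)})$ at any bond $j$ with $j_1\le j<j_2$ writes $M_{i_1,(i_2,i_3)}$ as the product of an $n_{i_1}\times r_{j+1}$ matrix depending only on $x_{i_1}$ (recall $r_1=1$, so the left piece is a genuine row vector) and an $r_{j+1}\times(n_{i_2}n_{i_3})$ matrix depending only on $(x_{i_2},x_{i_3})$, whence $\rk\le r_{j+1}$; minimizing over $j$, and arguing symmetrically for $M_{i_3,(i_1,i_2)}$ in which $i_3$ is the right end of the triple, gives the bound. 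Under Assumption~\ref{asp:exist_R_TT} both bounds are strictly below $R^2$, so $\sigma_{R^2}(M_{i_1,(i_2,i_3)})=\sigma_{R^2}(M_{i_3,(i_1,i_2)})=0$.

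The second ingredient is the analog of Lemma~\ref{lem:rk_lowbd_TR}: if $R_1\le\min_{j_1+1\le j\le j_2}r_j$, $R_2\le\min_{j_2+1\le j\le j_3}r_j$, $n_{i_1}\ge R_1$, $n_{i_3}\ge R_2$, and $n_{i_2}\ge R_1R_2$, then $\rk(M_{i_2,(i_3,i_1)})\ge R_1R_2$ off a measure-zero set. Exactly as in Lemma~\ref{lem:rk_lowbd_TR}, this set is the common zero locus of the $R_1R_2\times R_1R_2$ minors of $M_{i_2,(i_3,i_1)}$, each a polynomial in the entries of $\tu{}$, hence measure-zero as soon as one exhibits a single $\tu{}$ with $\rk(M_{i_2,(i_3,i_1)})=R_1R_2$. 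I would build it from standard basis vectors $e_{p,r}\in\bR^r$: let the free cores on the four segments $[1,j_1-1]$, $[j_1+1,j_2-1]$, $[j_2+1,j_3-1]$, $[j_3+1,d]$ act as partial identities $\sum_p e_{p,r_j}e_{p,r_{j+1}}^\top$ that carry the relevant basis vector through, set $\tu{\tau(j_1)}(x_{i_1})=e_{1,r_{j_1}}e_{p_1,r_{j_1+1}}^\top$ for $x_{i_1}=p_1\le R_1$ and $0$ otherwise, $\tu{\tau(j_3)}(x_{i_3})=e_{q_3,r_{j_3}}e_{1,r_{j_3+1}}^\top$ for $x_{i_3}=q_3\le R_2$ and $0$ otherwise, and $\tu{\tau(j_2)}(x_{i_2})=e_{p,r_{j_2}}e_{q,r_{j_2+1}}^\top$ for $x_{i_2}=(p-1)R_2+q$ with $p\le R_1$, $q\le R_2$ and $0$ otherwise; a direct computation (now a product of three matrices, one carrying each of $i_1,i_2,i_3$, rather than a trace of four) gives $M_{i_2,(i_3,i_1)}((x_{i_2}),(x_{i_3},x_{i_1}))=\delta_{p,p_1}\delta_{q,q_3}$ on the valid ranges and $0$ elsewhere, so $\rk(M_{i_2,(i_3,i_1)})=R_1R_2$. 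Taking $R_1=R_2=R$ — admissible under Assumption~\ref{asp:exist_R_TT} because $R\le\min_{2\le j\le d}r_j$, both $n_{i_1},n_{i_3}\ge R$ (path endpoints or interior), and $i_2$ is necessarily interior, i.e. $2\le j_2\le d-1$, so $n_{i_2}\ge R^2$ — yields almost surely $\sigma_{R^2}(M_{i_2,(i_3,i_1)})>0=\sigma_{R^2}(M_{i_1,(i_2,i_3)})=\sigma_{R^2}(M_{i_3,(i_1,i_2)})$, so Algorithm~\ref{alg: 3index} enters its second conditional and returns $(i_1,i_2,i_3)$, the correct order; this proves Theorem~\ref{thm:TT}.

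For Corollary~\ref{cor:TT}, Algorithm~\ref{alg: whole_train} invokes Algorithm~\ref{alg: 3index} on $O(d\log d)$ triples (a subset of all $\binom{d}{3}$ triples); by Theorem~\ref{thm:TT} each call is correct off a measure-zero set of $\tu{}$, and a finite union of measure-zero sets is measure-zero. Off that union, every call returns the correct order, so the binary-search insertion — which keeps the running sequence consistent with $\tau$ whenever it is fed correct triple orders, by a routine check of the loop — outputs $\tau'$ consistent with $\tau$ in the sense of Definition~\ref{def:consistent_TT}, and Proposition~\ref{prop:equiv_class_TT} (whose statement should read $\CPTT^d(\tau)$ rather than $\CPTR^d(\tau)$) gives $\tau'\in\CPTT^d(\tau)$. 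The main obstacle is the explicit rank-$R_1R_2$ construction in the lower bound lemma: the partial-identity cores on the four free segments must be chosen so that the two contractions flanking $\tu{\tau(j_2)}$ carry exactly $e_{p_1,\cdot}$ and $e_{q_3,\cdot}$, while the degenerate cases $j_1=1$, $j_3=d$, $j_2=j_1+1$, $j_3=j_2+1$ (empty segment products, corner bond dimension $1$) must be handled correctly; a secondary point is checking that the asymmetric hypothesis of Assumption~\ref{asp:exist_R_TT} ($n\ge R$ at the two path ends, $n\ge R^2$ in the interior) is exactly what both the upper bounds and the construction demand, which works precisely because the middle index $i_2$ is always an interior node.
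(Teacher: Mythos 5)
Your proposal is correct and follows essentially the same route as the paper: the same two lemmas (rank upper bounds for $M_{i_1,(i_2,i_3)}$ and $M_{i_3,(i_1,i_2)}$ by splitting the chain at a bond, and a lower bound for $M_{i_2,(i_3,i_1)}$ via the zero locus of the $R_1R_2\times R_1R_2$ minors together with an explicit basis-vector construction), followed by a finite union of measure-zero sets over the $O(d\log d)$ calls and Proposition~\ref{prop:equiv_class_TT}. The only cosmetic difference is that the paper uses the single term $e_{1,r_j}e_{1,r_{j+1}}^\top$ on the two outer free segments where you use partial identities, which changes nothing since the boundary bond has dimension $r_1=1$; you also correctly flag the $\CPTR$/$\CPTT$ typo in Proposition~\ref{prop:equiv_class_TT}.
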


\subsection{Robustness against observation error}
\label{sec:robust}

As Algorithms~\ref{alg: 4index}-\ref{alg: whole_train} are based on singular values, which are continuous with respect to matrix entries, they are expected to be robust against observational noise. In this section, we establish rigorous robustness results assuming the noise is Gaussian. Similar to the previous subsection, we will present the proof for the TR format and defer the similar proof for the TT format to the appendix.

Let $\tau\in S_d$ be the underlying permutation and let $R$ be as in Assumption~\ref{asp:exist_R_TR}. It has already been proved in Lemma~\ref{lem:rk_lowbd_TR} that 
\begin{equation}\label{SinValue>0_TR}
     \sigma_{R^4}(M_{(i_1,i_3),(i_2,i_4)}) > 0
\end{equation}
for almost all $\tu{}\in \calU_{\vec{r},\vec{n},\tau}^d$ and all $i_1,i_2,i_3,i_4\in\{1,2,\dots,d\}$ of the correct order with respect to $\tau$. For the noisy case, we need a stronger assumption than \eqref{SinValue>0_TR}, requiring a uniform positive lower bound of the $R^4$-th singular value:
 
\begin{assumption}\label{assump:uniform_SingValue_TR}
Let $\tu{}\in\calU_{\vec{r},\vec{n},\tau}^d$ be the underlying data for the TR format and let Assumption~\ref{asp:exist_R_TR} be satisfied with $R>0$. There exists a positive constant $\sigma>0$ such that for any $i_1,i_2,i_3,i_4\in\{1,2,\dots,d\}$ of correct order with respect to $\tau$ and any $y_i\in \{1,2,\dots,n_i\}$, $i\in\{1,2,\dots,d\}\backslash\{i_1,i_2,i_3,i_4\}$, the matrix $M_{(i_1,i_3),(i_2,i_4)}\in \bR^{(n_{i_1} n_{i_3})\times (n_{i_2} n_{i_4})}$ satisfies 
\begin{equation}\label{sigma_lb_TR}
    \sigma_{R^4}(M_{(i_1,i_3),(i_2,i_4)})\geq \sigma.
\end{equation}
\end{assumption}
 
With the uniform bound assumption above, we can establish a high-probability guarantee of the correctness of Algorithm~\ref{alg: 4index}.

\begin{theorem}\label{thm:robust_TR}
Suppose Assumption~\ref{assump:uniform_SingValue_TR} holds and the noise in each observation of $\mathbf{T} = \varphi(\tu{}, \tau)$ satisfies $\mathcal{N}(0,\sigma_e^2)$ independently. For any $T< \frac{\sigma}{\sigma_e}$ ($\sigma$ is defined in \eqref{sigma_lb_TR}), any four indices $i_1,i_2,i_3,i_4\in\{1,2,\dots,d\}$, and any $y_i\in \{1,2,\dots,n_i\}$, $i\in\{1,2,\dots,d\}\backslash\{i_1,i_2,i_3,i_4\}$, the probability that Algorithm~\ref{alg: 4index} returns an incorrect ordering of $i_1,i_2,i_3,i_4$ is no more than
\begin{equation*}
     6\cdot \exp\left(-\frac{1}{8}\max\{T-4n_{\max}, 0\}^2\right).
\end{equation*}
 \end{theorem}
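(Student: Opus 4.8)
The plan is to reduce the failure of Algorithm~\ref{alg: 4index} to the event that the operator norm of the reshaped Gaussian observation noise is large, and then to control that event via the classical concentration inequality for the spectral norm of a Gaussian matrix. First I would fix notation: after the $y_i$ are chosen, the entries of $\mathbf{T}$ that the algorithm queries form a slice carrying i.i.d.\ $\mathcal{N}(0,\sigma_e^2)$ additive noise; collecting this noise into a tensor $\mathcal{E}$, the three matrices built in the algorithm are $\widehat M^{(\ell)}=M^{(\ell)}+E^{(\ell)}$, $\ell=1,2,3$, where each $E^{(\ell)}$ is a matricization of $\mathcal{E}$ and hence is itself a matrix with i.i.d.\ $\mathcal{N}(0,\sigma_e^2)$ entries of size at most $n_{\max}^2\times n_{\max}^2$. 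Exactly one of the three $2{+}2$ groupings matches the correct cyclic order of $i_1,i_2,i_3,i_4$ with respect to $\tau$; denote the corresponding matrix $M^\star$. By Assumption~\ref{assump:uniform_SingValue_TR}, $\sigma_{R^4}(M^\star)\geq\sigma$, while by Lemma~\ref{lem:rk_upbd_TR} together with Assumption~\ref{asp:exist_R_TR} — which forces every such rank to be at most $(\max_j r_j)^2<R^4$ — the other two matrices have rank strictly below $R^4$, so their $R^4$-th singular values vanish.

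Next I would apply Weyl's inequality $|\sigma_k(A+B)-\sigma_k(A)|\leq\|B\|_{\mathrm{op}}$ to each matricization: $\sigma_{R^4}(\widehat M^\star)\geq\sigma-\|E^\star\|_{\mathrm{op}}$, and $\sigma_{R^4}(\widehat M^{(\ell)})\leq\|E^{(\ell)}\|_{\mathrm{op}}$ for the two incorrect $\ell$. Since Algorithm~\ref{alg: 4index} returns the correct order precisely when $\sigma_{R^4}(\widehat M^\star)$ is the strict maximum of the three values, any failure (including the algorithm returning no output) forces $\|E^{(\ell)}\|_{\mathrm{op}}+\|E^\star\|_{\mathrm{op}}\geq\sigma$ for at least one incorrect $\ell$, hence $\|E\|_{\mathrm{op}}\geq\sigma/2$ for at least one of the matricizations $E$. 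A union bound over the matricizations involved, together with the standard two-term form of the Gaussian spectral-norm tail, produces the prefactor $6$ and reduces the task to estimating $\mathbb{P}(\|E\|_{\mathrm{op}}\geq\sigma/2)$ for a single $m_1\times m_2$ Gaussian matrix $E$ with $m_1,m_2\leq n_{\max}^2$.

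For that estimate I would use that $E/\sigma_e$ is a standard Gaussian matrix with $\mathbb{E}\|E/\sigma_e\|_{\mathrm{op}}\leq\sqrt{m_1}+\sqrt{m_2}\leq 2n_{\max}$, and that $G\mapsto\|G\|_{\mathrm{op}}$ is $1$-Lipschitz in the Frobenius norm, so Gaussian concentration gives $\mathbb{P}(\|E/\sigma_e\|_{\mathrm{op}}\geq 2n_{\max}+s)\leq e^{-s^2/2}$. Choosing $s$ with $\sigma_e(2n_{\max}+s)=\sigma/2$ and using the hypothesis $T<\sigma/\sigma_e$, so that $\sigma/(2\sigma_e)-2n_{\max}\geq (T-4n_{\max})/2$, yields $\mathbb{P}(\|E\|_{\mathrm{op}}\geq\sigma/2)\leq\exp\!\big(-\tfrac18\max\{T-4n_{\max},0\}^2\big)$; substituting into the union bound gives the claimed probability.

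I expect the substantive work to be purely bookkeeping: verifying that the three matricizations share one underlying Gaussian tensor (so that each $E^{(\ell)}$ individually inherits the i.i.d.\ Gaussian structure with the correct dimensions), and identifying which grouping is the correct-order one so that Assumption~\ref{assump:uniform_SingValue_TR} and Lemma~\ref{lem:rk_upbd_TR} apply unambiguously after relabeling. There is no genuine obstacle here; the one quantitative ingredient — sharp control of the spectral norm of a Gaussian matrix — is classical, and the rest is Weyl's inequality followed by a union bound.
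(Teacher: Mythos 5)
Your proposal is correct and follows essentially the same route as the paper's proof: Weyl's inequality applied to the three matricizations, the rank upper bound (Lemma~\ref{lem:rk_upbd_TR} with Assumption~\ref{asp:exist_R_TR}) to kill the $R^4$-th singular value of the two incorrect groupings, Assumption~\ref{assump:uniform_SingValue_TR} for the correct one, and a Gaussian spectral-norm tail bound plus a union bound over the three noise matricizations. The only cosmetic differences are that you invoke the one-sided Lipschitz/Borell--TIS concentration with $\mathbb{E}\norm{G}\leq\sqrt{m_1}+\sqrt{m_2}$ while the paper cites the two-sided tail $2e^{-t^2/2}$ (which only affects whether the prefactor is $3$ or $6$, both within the claimed bound), and your failure event is phrased via a pairwise comparison rather than the paper's event that all three singular values land on the correct side of $\sigma/2$.
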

 
 \begin{corollary}\label{cor:robust_TR}
 In the same setting as in Theorem~\ref{thm:robust_TR}, the probability that Algorithm~\ref{alg: whole_ring} returns an incorrect ordering is smaller than or equal to
\begin{equation*}
     C d\log d\cdot \exp\left(-\frac{1}{8}\max\{T-4n_{\max}, 0\}^2\right),
 \end{equation*}
 where $C$ is an absolute constant.
 \end{corollary}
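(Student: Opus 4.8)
The plan is to split the argument into a deterministic combinatorial statement and a probabilistic union bound, mirroring how Corollary~\ref{cor:TR} is obtained from Theorem~\ref{thm:TR}. The combinatorial statement is: if \emph{every} invocation of Algorithm~\ref{alg: 4index} made inside Algorithm~\ref{alg: whole_ring} returns the correct order of its four input indices with respect to $\tau$, then the returned permutation $\tau'$ lies in $\CPTR^d(\tau)$. This is the content behind Corollary~\ref{cor:TR}: by induction on $t$, assuming $(i_1,\dots,i_t)$ is consistent with $\tau$ in the sense of Definition~\ref{def:consistent_TR}, the binary-search loop fed only correct four-index answers narrows $[j_{\min},j_{\max}]$ to a gap into which inserting $t+1$ preserves consistency, so $(i_1,\dots,i_{t+1})$ is again consistent; at $t=d$, Proposition~\ref{prop:equiv_class_TR} gives $\tau'\in\CPTR^d(\tau)$. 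Hence $\{\tau'\notin\CPTR^d(\tau)\}$ is contained in the event that some invocation of Algorithm~\ref{alg: 4index} inside Algorithm~\ref{alg: whole_ring} fails to return the correct order.

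First I would bound the number $K$ of invocations of Algorithm~\ref{alg: 4index}. For each $t\in\{4,\dots,d-1\}$, a pass of the \texttt{while} loop in Algorithm~\ref{alg: whole_ring} replaces $j_{\max}-j_{\min}$ by at most roughly $2/3$ of its current value, so the loop halts after $O(\log t)$ passes \emph{no matter what the calls return}; together with the single initialization call on indices $1,2,3,4$, this yields $K\le C_0\, d\log d$ almost surely for an absolute constant $C_0$. It is essential that this bound holds pathwise, since it is what legitimizes a union bound over the (data-dependent) list of calls that actually occur.

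Next I would run the union bound. List the at most $K$ invocation slots in execution order, let $\mathcal F_k$ be the $\sigma$-algebra generated by the randomness used in slots $1,\dots,k$, and let $F_k$ be the event that slot $k$ is used and its call does not return the correct order of its input indices with respect to $\tau$. The four input indices queried at slot $k$ and the auxiliary coordinates $y_i$ it samples are, given $\mathcal F_{k-1}$, fixed up to the fresh sampling and observation noise of slot $k$ itself; conditioning on $\mathcal F_{k-1}$ therefore pins down a concrete instance to which Theorem~\ref{thm:robust_TR} applies verbatim, so $\Pr(F_k\mid\mathcal F_{k-1})\le 6\exp\!\big(-\tfrac18\max\{T-4n_{\max},0\}^2\big)$ on the event that slot $k$ is used. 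Taking expectations, summing over $k\le K$, and absorbing $6C_0$ into an absolute constant $C$ gives
\[
\Pr\big(\tau'\notin\CPTR^d(\tau)\big)\;\le\;\Pr\Big(\textstyle\bigcup_{k=1}^K F_k\Big)\;\le\;C\,d\log d\cdot\exp\!\Big(-\tfrac18\max\{T-4n_{\max},0\}^2\Big),
\]
which is the asserted estimate.

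The step that needs the most care is the conditioning in the last paragraph: the \emph{sequence} of four-index queries is itself determined by earlier noisy outputs, so one may neither union bound over a fixed prescribed collection of events, nor over all $O(d^4)$ conceivable index tuples (which would destroy the $d\log d$ rate). The remedy is the filtration argument above, which works precisely because each call of Algorithm~\ref{alg: 4index} resamples its auxiliary indices and re-queries the oracle, so that the conditional failure probability at every slot still obeys the bound of Theorem~\ref{thm:robust_TR}; combined with the pathwise bound $K=O(d\log d)$ on the number of slots, this yields the corollary.
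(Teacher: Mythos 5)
Your proposal is correct and follows essentially the same route as the paper: the paper's proof is precisely a union bound of the Theorem~\ref{thm:robust_TR} failure probability over the $O(d\log d)$ invocations of Algorithm~\ref{alg: 4index} made by Algorithm~\ref{alg: whole_ring}. Your additional filtration argument simply makes explicit the adaptivity point (that the queried quadruples and sampled $y_i$ depend on earlier noisy outputs) which the paper's one-line proof leaves implicit, and it is a valid way to justify that union bound.
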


It is evident from Theorem~\ref{thm:robust_TR} and Corollary~\ref{cor:robust_TR} that the level of robustness against the noise is largely determined by the singular value threshold $\sigma$ in \eqref{sigma_lb_TR}. In some datasets, $\sigma$ may be small, a scenario to be demonstrated through numerical examples later.

For proving Theorem~\ref{thm:robust_TR}, we first state some preliminaries that bound the singular values of a matrix with noisy entries:
\begin{theorem}[Largest singular value of matrices with i.i.d. Gaussian entries, \cites{rudelson2010non,davidson2001local}] \label{thm:large_sing_value_gauss}
Suppose that $A\in\bR^{m_1\times m_2}$ and the entries of $A$ are i.i.d. $\mathcal{N}(0,1)$. Then the largest singular value of $A$ satisfies
\begin{equation*}
    \mathbb{P}(\sigma_{\max}(A)>\sqrt{m_1}+\sqrt{m_2}+t)\leq 2 e^{-t^2/2},\quad \forall~t>0.
\end{equation*}
\end{theorem}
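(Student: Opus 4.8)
This is the classical Davidson--Szarek/Rudelson--Vershynin estimate; the plan is to combine Gaussian concentration of measure with a Gaussian comparison inequality for the mean. Throughout, view $\sigma_{\max}(A)=\|A\|_{\mathrm{op}}=\sup_{u\in S^{m_1-1},\,v\in S^{m_2-1}} u^\top A v$ as a function of the $m_1m_2$ i.i.d.\ $\mathcal{N}(0,1)$ entries of $A$.

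\textbf{Step 1 (Lipschitz property and concentration).} First I would observe that $A\mapsto\|A\|_{\mathrm{op}}$, as a map from $\bR^{m_1\times m_2}$ equipped with the Frobenius norm (the Euclidean norm on the vector of entries) to $\bR$, is $1$-Lipschitz, since $\big|\|A\|_{\mathrm{op}}-\|B\|_{\mathrm{op}}\big|\le\|A-B\|_{\mathrm{op}}\le\|A-B\|_F$. The Gaussian concentration inequality for Lipschitz functions then gives, for all $t>0$,
\[
\mathbb{P}\big(\sigma_{\max}(A)>\mathbb{E}\,\sigma_{\max}(A)+t\big)\le e^{-t^2/2}.
\]

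\textbf{Step 2 (bounding the mean).} Next I would show $\mathbb{E}\,\sigma_{\max}(A)\le\sqrt{m_1}+\sqrt{m_2}$ via Sudakov--Fernique. Introduce the centered Gaussian process $X_{u,v}=u^\top A v$ indexed by $(u,v)\in S^{m_1-1}\times S^{m_2-1}$ and the comparison process $Y_{u,v}=g^\top u+h^\top v$ with $g\sim\mathcal{N}(0,I_{m_1})$, $h\sim\mathcal{N}(0,I_{m_2})$ independent. A direct covariance computation yields, for unit vectors, $\mathbb{E}(X_{u,v}-X_{u',v'})^2=2-2\langle u,u'\rangle\langle v,v'\rangle$ and $\mathbb{E}(Y_{u,v}-Y_{u',v'})^2=4-2\langle u,u'\rangle-2\langle v,v'\rangle$, and their difference equals $2(1-\langle u,u'\rangle)(1-\langle v,v'\rangle)\ge 0$ since both inner products lie in $[-1,1]$. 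Hence $\mathbb{E}\sup_{u,v}X_{u,v}\le\mathbb{E}\sup_{u,v}Y_{u,v}=\mathbb{E}\|g\|+\mathbb{E}\|h\|\le\sqrt{m_1}+\sqrt{m_2}$, using that $Y_{u,v}$ is additively separable and that $\mathbb{E}\|g\|\le(\mathbb{E}\|g\|^2)^{1/2}=\sqrt{m_1}$ by Jensen. Combining with Step 1,
\[
\mathbb{P}\big(\sigma_{\max}(A)>\sqrt{m_1}+\sqrt{m_2}+t\big)\le\mathbb{P}\big(\sigma_{\max}(A)>\mathbb{E}\,\sigma_{\max}(A)+t\big)\le e^{-t^2/2}\le 2e^{-t^2/2},
\]
which is the claimed bound; the factor $2$ is harmless slack.

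\textbf{Main obstacle and alternative.} The Lipschitz observation and Gaussian concentration are routine; the genuinely substantive ingredient is the mean bound $\mathbb{E}\,\sigma_{\max}(A)\le\sqrt{m_1}+\sqrt{m_2}$, which requires a Gaussian comparison theorem (Sudakov--Fernique or Gordon). If one wishes to stay fully elementary, the fallback is an $\varepsilon$-net argument: pick $\varepsilon$-nets $\mathcal{N}_1,\mathcal{N}_2$ of the two spheres with $|\mathcal{N}_i|\le(1+2/\varepsilon)^{m_i}$, use $\|A\|_{\mathrm{op}}\le(1-2\varepsilon)^{-1}\max_{u\in\mathcal{N}_1,v\in\mathcal{N}_2}u^\top A v$ for $\varepsilon<1/2$, and apply the scalar Gaussian tail bound together with a union bound over the net. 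This route is self-contained but yields worse absolute constants and does not recover the sharp centering $\sqrt{m_1}+\sqrt{m_2}$, so I would present it only as a remark; the comparison-based argument above is the preferred one.
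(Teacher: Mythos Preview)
Your argument is correct and is exactly the standard Davidson--Szarek proof: Gaussian Lipschitz concentration plus the Sudakov--Fernique bound $\mathbb{E}\,\sigma_{\max}(A)\le\sqrt{m_1}+\sqrt{m_2}$ on the mean. The increment calculations and the nonnegativity check $2(1-\langle u,u'\rangle)(1-\langle v,v'\rangle)\ge 0$ are all fine.

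There is nothing to compare against, however: the paper does not prove this theorem at all. It is quoted as a known preliminary with citations to \cites{rudelson2010non,davidson2001local} and then used as a black box in the proofs of Theorems~\ref{thm:robust_TR} and~\ref{thm:robust_TT}. So your write-up goes beyond what the paper itself provides; if the goal is only to match the paper, a one-line citation suffices.
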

 
\begin{theorem}[Weyl's inequality]\label{thm:weyl_ineq}
 For any $A,E\in \bR^{m_1\times  m_2}$, it holds that
 \begin{equation*}
     |\sigma_k (A+E)-\sigma_k(A)|\leq \sigma_{\max}(E),\quad\forall~1\leq k\leq \min\{m_1,m_2\}.
 \end{equation*}
 \end{theorem}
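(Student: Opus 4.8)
The plan is to derive Weyl's inequality for singular values directly from the Courant--Fischer min--max variational characterization, which keeps the argument self-contained and short. Write $n=\min\{m_1,m_2\}$ and recall the representation
\begin{equation*}
    \sigma_k(A) = \max_{\substack{V\subseteq \bR^{m_2} \\ \dim V = k}}\ \min_{\substack{x\in V \\ \|x\|_2=1}} \|Ax\|_2, \qquad 1\le k\le n,
\end{equation*}
valid for any $A\in\bR^{m_1\times m_2}$; this follows from the singular value decomposition $A = U\Sigma V^\top$ by taking $V$ to be the span of the first $k$ right singular vectors (for the ``$\ge$'' direction) and by a dimension-count intersection argument against the span of the last $m_2-k+1$ right singular vectors (for the ``$\le$'' direction). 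First I would state this fact, either with the short verification just sketched or with a citation to a standard linear-algebra reference.

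The second ingredient is the elementary estimate that for every $x\in\bR^{m_2}$, the triangle inequality together with $\|Ex\|_2\le\sigma_{\max}(E)\|x\|_2$ (the operator norm of $E$ equals its largest singular value) gives
\begin{equation*}
    \|Ax\|_2 - \sigma_{\max}(E)\|x\|_2 \le \|(A+E)x\|_2 \le \|Ax\|_2 + \sigma_{\max}(E)\|x\|_2.
\end{equation*}

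For the main step, fix $k$ and let $V^\star\subseteq\bR^{m_2}$ be a subspace of dimension $k$ attaining the maximum in the variational formula for $\sigma_k(A)$. Restricting the outer maximum in the formula for $\sigma_k(A+E)$ to this single subspace and applying the estimate above to each unit vector $x\in V^\star$,
\begin{equation*}
    \sigma_k(A+E) \ge \min_{\substack{x\in V^\star \\ \|x\|_2=1}} \|(A+E)x\|_2 \ge \min_{\substack{x\in V^\star \\ \|x\|_2=1}} \|Ax\|_2 - \sigma_{\max}(E) = \sigma_k(A) - \sigma_{\max}(E).
\end{equation*}
Running the same argument with the roles of $A$ and $A+E$ interchanged and $E$ replaced by $-E$ (note $\sigma_{\max}(-E)=\sigma_{\max}(E)$) yields $\sigma_k(A)\ge\sigma_k(A+E)-\sigma_{\max}(E)$, and combining the two bounds gives $|\sigma_k(A+E)-\sigma_k(A)|\le\sigma_{\max}(E)$ for all $1\le k\le n$.

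There is no genuinely hard step here; the only points requiring care are stating the min--max characterization correctly in the rectangular case --- the competing subspaces live in $\bR^{m_2}$ and have dimension exactly $k$ --- and noting that $\sigma_{\max}(E)$ coincides with the operator norm so that $\|Ex\|_2\le\sigma_{\max}(E)\|x\|_2$ is available. As an alternative one could reduce to Weyl's inequality for eigenvalues of symmetric matrices via the Jordan--Wielandt dilation $\bigl(\begin{smallmatrix} 0 & A \\ A^\top & 0\end{smallmatrix}\bigr)$, whose nonzero eigenvalues are $\pm\sigma_j(A)$ and whose additive perturbation has operator norm $\sigma_{\max}(E)$; I would mention this only as a remark and present the direct min--max proof as the main argument.
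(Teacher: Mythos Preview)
Your argument is correct: the Courant--Fischer min--max formula for singular values together with the operator-norm bound $\|Ex\|_2\le\sigma_{\max}(E)\|x\|_2$ gives the inequality in the way you describe, and the symmetric-role step with $-E$ is fine.

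There is nothing to compare against, however, because the paper does not prove Theorem~\ref{thm:weyl_ineq}. It is stated there as a classical preliminary (Weyl's inequality) and invoked as a black box in the robustness analysis; no argument is supplied. Your write-up is a standard self-contained proof of the quoted fact and would be a perfectly acceptable addition if one wanted to include a proof rather than a citation.
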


\begin{proof}[Proof of Theorem~\ref{thm:robust_TR}]
We first show the robustness of Algorithm~\ref{alg: 4index}. Without loss of generality, let us assume that $(i_1,i_2,i_3,i_4)$ is of the correct order with respect to $\tau$. Let $E$ be the noise matrix in observing $M_{(i_1,i_2),(i_3,i_4)}$, $M_{(i_1,i_3),(i_2,i_4)}$, or $M_{(i_1,i_4),(i_2,i_3)}$. Then by Theorem~\ref{thm:large_sing_value_gauss}, it holds that
\begin{equation*}
    \mathbb{P}\left(\sigma_{\max}(E)\geq \frac{\sigma}{2}\right)\leq \mathbb{P}\left(\sigma_{\max}(E) > (2 n_{\max} + t) \sigma_e\right)\leq 2 e^{-t^2/2},
\end{equation*}
where $t = \frac{T}{2}-2 n_{\max}$. Then using Assumption~\ref{assump:uniform_SingValue_TR} and Theorem~\ref{thm:weyl_ineq} (Weyl's inequality), with probability at least
\begin{equation*}
    1- 6\cdot \exp\left(-\frac{t^2}{2}\right) = 1 - 6\cdot \exp\left(-\frac{1}{8}(T-4n_{\max})^2\right),
\end{equation*}
we have $\sigma_{R^4}(M_{(i_1,i_2),(i_3,i_4)})<\frac{\sigma}{2}$, $\sigma_{R^4}(M_{(i_1,i_3),(i_2,i_4)})>\frac{\sigma}{2}$, and $\sigma_{R^4}(M_{(i_1,i_4),(i_2,i_3)})<\frac{\sigma}{2}$, which leads to the correctness of the output of Algorithm~\ref{alg: 4index}. 
\end{proof}

\begin{proof}[Proof of Corollary~\ref{cor:robust_TR}]
Corollary~\ref{cor:robust_TR} follows immediately from Theorem~\ref{thm:robust_TR} and the fact that Algorithm~\ref{alg: whole_ring} calls Algorithm~\ref{alg: 4index} for $O(d\log d)$ times.
\end{proof}

The robustness result of Algorithm~\ref{alg: 3index} and Algorithm~\ref{alg: whole_train} is similar, which is stated as follows and proved in the appendix.

 \begin{assumption}\label{assump:uniform_SingValue_TT}
Let $\tu{}\in\calU_{\vec{r},\vec{n},\tau}^d$ be the underlying data for the TT format and let Assumption~\ref{asp:exist_R_TT} be satisfied with $R>0$. There exists a positive constant $\sigma>0$ such that for any $i_1,i_2,i_3\in\{1,2,\dots,d\}$ of correct order with respect to $\tau$ and any $y_i\in \{1,2,\dots,n_i\}$, $i\in\{1,2,\dots,d\}\backslash\{i_1,i_2,i_3\}$, the matrix $M_{i_2,(i_1,i_3)}\in \bR^{n_{i_2}\times (n_{i_1} n_{i_3})}$ satisfies 
 \begin{equation}\label{sigma_lb_TT}
     \sigma_{R^2}(M_{i_2,(i_1,i_3)})\geq \sigma.
 \end{equation}
 \end{assumption}

 \begin{theorem}\label{thm:robust_TT}
 Suppose Assumption~\ref{assump:uniform_SingValue_TT} holds and the noise in each observation of $\mathbf{T} = \varphi(\tu{}, \tau)$ satisfies $\mathcal{N}(0,\sigma_e^2)$ independently. For any $T< \frac{\sigma}{\sigma_e}$ ($\sigma$ is defined in \eqref{sigma_lb_TT}), any three indices $i_1,i_2,i_3\in\{1,2,\dots,d\}$, and any $y_i\in \{1,2,\dots,n_i\}$, $i\in\{1,2,\dots,d\}\backslash\{i_1,i_2,i_3\}$, the probability that Algorithm~\ref{alg: 3index} returns an incorrect ordering of $i_1,i_2,i_3$ is at most
 \begin{equation*}
     6\cdot \exp\left(-\frac{1}{8}\max\{T-2n_{\max}-2\sqrt{n_{\max}},0\}^2\right),
 \end{equation*}
 as long as the noise in each observation of $\mathbf{T} = \varphi(\tu{}, \tau)$ distributes as $\mathcal{N}(0,\sigma_e^2)$ independently with $\sigma_e < \frac{\sigma}{T}$, where $\sigma$ is the lower bound in \eqref{sigma_lb_TT}.
 \end{theorem}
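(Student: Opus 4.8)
The plan is to follow the template of the proof of Theorem~\ref{thm:robust_TR}, adapting it from the three fourth‑order matricizations used by Algorithm~\ref{alg: 4index} to the three third‑order matricizations $M_{i_1,(i_2,i_3)}$, $M_{i_2,(i_3,i_1)}$, $M_{i_3,(i_1,i_2)}$ used by Algorithm~\ref{alg: 3index}. As a first step I would reduce, by relabeling the three input indices, to the case in which $(i_1,i_2,i_3)$ is of the correct order with respect to $\tau$ (this is harmless since both Algorithm~\ref{alg: 3index} and Assumption~\ref{assump:uniform_SingValue_TT} are symmetric under permuting the indices), and it then suffices to show the algorithm returns $(i_1,i_2,i_3)$ with the claimed probability.

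Next I would isolate the deterministic singular‑value gap in the noiseless matricizations. Writing $i_s=\tau(j_s)$ with $j_1<j_2<j_3$, the rank upper bound for the tensor train format---the analogue of Lemma~\ref{lem:rk_upbd_TR}, obtained by the same ``matricization as a matrix product'' argument and proved in the appendix---gives $\rk(M_{i_1,(i_2,i_3)})\le\min_{j_1+1\le j\le j_2}r_j$ and $\rk(M_{i_3,(i_1,i_2)})\le\min_{j_2+1\le j\le j_3}r_j$, both strictly less than $R^2$ by Assumption~\ref{asp:exist_R_TT}, so that $\sigma_{R^2}(M_{i_1,(i_2,i_3)})=\sigma_{R^2}(M_{i_3,(i_1,i_2)})=0$; meanwhile $M_{i_2,(i_3,i_1)}$ differs from $M_{i_2,(i_1,i_3)}$ only by a permutation of its columns, so Assumption~\ref{assump:uniform_SingValue_TT} gives $\sigma_{R^2}(M_{i_2,(i_3,i_1)})\ge\sigma$. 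Then I would add the noise: each of the three matrices is observed with an independent Gaussian perturbation $E$ of i.i.d.\ $\mathcal N(0,\sigma_e^2)$ entries and of size at most $n_{\max}\times n_{\max}^2$; applying Theorem~\ref{thm:large_sing_value_gauss} to $E/\sigma_e$ yields $\mathbb P(\sigma_{\max}(E)>(\sqrt{n_{\max}}+n_{\max}+t)\sigma_e)\le 2e^{-t^2/2}$ for $t>0$, and taking $t=\tfrac{T}{2}-n_{\max}-\sqrt{n_{\max}}$ together with $\sigma_e<\sigma/T$ (so $(T/2)\sigma_e<\sigma/2$) bounds $\mathbb P(\sigma_{\max}(E)\ge\sigma/2)$ by $2\exp(-\tfrac18(T-2n_{\max}-2\sqrt{n_{\max}})^2)$, the bound being vacuous when $t\le0$. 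A union bound over the three matrices, followed by Weyl's inequality (Theorem~\ref{thm:weyl_ineq}) combined with the noiseless gap, shows that with probability at least $1-6\exp(-\tfrac18(T-2n_{\max}-2\sqrt{n_{\max}})^2)$ the noisy $\sigma_{R^2}$ of $M_{i_2,(i_3,i_1)}$ exceeds $\sigma/2$ while those of the other two fall below $\sigma/2$, so the second conditional of Algorithm~\ref{alg: 3index} fires and returns the correctly ordered triple $(i_1,i_2,i_3)$.

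There is little genuine difficulty here beyond bookkeeping; the one place that requires care---and the only real difference from Theorem~\ref{thm:robust_TR}---is that the TT matricizations are $n_{\max}\times n_{\max}^2$ rather than $n_{\max}^2\times n_{\max}^2$, so the deterministic shift $\sqrt{m_1}+\sqrt{m_2}$ in the Gaussian tail bound becomes $n_{\max}+\sqrt{n_{\max}}$ instead of $2n_{\max}$, which is exactly what produces the $2n_{\max}+2\sqrt{n_{\max}}$ in the exponent. A minor point is that Assumption~\ref{assump:uniform_SingValue_TT} is stated for $M_{i_2,(i_1,i_3)}$ whereas Algorithm~\ref{alg: 3index} uses $M_{i_2,(i_3,i_1)}$; since one is a column permutation of the other they have the same singular values, so the assumption transfers directly. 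The corresponding guarantee for Algorithm~\ref{alg: whole_train} then follows, as in Corollary~\ref{cor:robust_TR}, by a union bound over its $O(d\log d)$ calls to Algorithm~\ref{alg: 3index}.
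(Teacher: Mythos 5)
Your proposal is correct and follows essentially the same argument as the paper's proof: assume the triple is correctly ordered, note the noiseless singular-value gap (the $R^2$-th singular value vanishes for the two wrong matricizations by the TT rank bounds while Assumption~\ref{assump:uniform_SingValue_TT} gives $\sigma_{R^2}\geq\sigma$ for the right one), bound $\sigma_{\max}(E)$ via Theorem~\ref{thm:large_sing_value_gauss} with $t=\tfrac{T}{2}-n_{\max}-\sqrt{n_{\max}}$, and conclude with Weyl's inequality and a union bound over the three matrices. Your explicit remarks on the column-permutation equivalence of $M_{i_2,(i_3,i_1)}$ and $M_{i_2,(i_1,i_3)}$ and on the vacuity of the bound when $t\leq 0$ are consistent with, and slightly more careful than, the paper's write-up.
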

 
 \begin{corollary}\label{cor:robust_TT}
 In the same setting as in Theorem~\ref{thm:robust_TT}, the probability that Algorithm~\ref{alg: whole_train} returns an incorrect ordering is smaller than or equal to
\begin{equation*}
     C d\log d\cdot \exp\left(-\frac{1}{8}\max\{T-2n_{\max}-2\sqrt{n_{\max}}, 0\}^2\right),
 \end{equation*}
 where $C$ is an absolute constant.
 \end{corollary}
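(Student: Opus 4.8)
The plan is to mirror the one‑line argument used for Corollary~\ref{cor:robust_TR}: reduce the correctness of Algorithm~\ref{alg: whole_train} to the simultaneous correctness of every three‑index subroutine call it makes, and then combine the per‑call guarantee of Theorem~\ref{thm:robust_TT} with a union bound.

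First I would count the invocations of Algorithm~\ref{alg: 3index}. For each outer‑loop value $t$ running from $3$ to $d-1$, one checks that the inner while loop shrinks the candidate interval $[j_{\min},j_{\max}]$ by roughly a factor of three at every iteration (each call of Algorithm~\ref{alg: 3index} partitions the remaining range into three nearly equal pieces and the update keeps only one of them), so it terminates after $O(\log t)$ iterations, each issuing exactly one call. Summing over $t$ gives a total of $N = O(d\log d)$ calls, matching the complexity estimate stated just after Algorithm~\ref{alg: whole_train}. Next I would introduce the event $G$ that every one of these $N$ calls returns the correct order of its three inputs with respect to $\tau$. On $G$, an induction on $t$ — structurally identical to the noiseless argument behind Corollary~\ref{cor:TT} — shows that the partial list $(i_1,\dots,i_t)$ remains consistent with $\tau$ in the sense of Definition~\ref{def:consistent_TT} after each insertion, because the binary search only consults the (now correct) outputs of Algorithm~\ref{alg: 3index} when choosing the slot for $t+1$; hence the returned $\tau'$ satisfies that $(\tau'(j_1),\tau'(j_2),\tau'(j_3))$ is of the correct order for all $j_1<j_2<j_3$, and Proposition~\ref{prop:equiv_class_TT} then forces $\tau'\in\CPTT^d(\tau)$. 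Consequently the event that Algorithm~\ref{alg: whole_train} errs is contained in $G^c$.

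Finally, Theorem~\ref{thm:robust_TT} gives, for each individual call, a failure probability of at most $p := 6\exp\left(-\frac{1}{8}\max\{T-2n_{\max}-2\sqrt{n_{\max}},0\}^2\right)$, so a union bound over the $N = O(d\log d)$ calls yields $\mathbb{P}(G^c)\le N p \le C\, d\log d \cdot \exp\left(-\frac{1}{8}\max\{T-2n_{\max}-2\sqrt{n_{\max}},0\}^2\right)$, where the absolute constant $C$ absorbs the factor $6$ together with the implied constant in $O(d\log d)$. The only point that deserves a word of care is that the union bound needs no independence across the noise matrices of different calls — which is convenient, since distinct calls resample the anchor indices $y_i$ in line~2 of Algorithm~\ref{alg: 3index} and may therefore reuse overlapping tensor entries — while within any single call the three matrices are built from entries carrying independent $\mathcal{N}(0,\sigma_e^2)$ noise, so Theorem~\ref{thm:robust_TT} applies to it verbatim. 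I do not anticipate a genuine obstacle here: the argument is a routine reduction plus union bound, with the only modeling subtlety being the reuse of entries across calls, which the union bound sidesteps.
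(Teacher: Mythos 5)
Your proposal is correct and follows essentially the same route as the paper: the paper's proof is exactly the observation that Algorithm~\ref{alg: whole_train} calls Algorithm~\ref{alg: 3index} $O(d\log d)$ times, combined with Theorem~\ref{thm:robust_TT} via a union bound. Your additional details (the induction showing that correctness of all subroutine calls implies $\tau'\in\CPTT^d(\tau)$ through Proposition~\ref{prop:equiv_class_TT}, and the remark that the union bound needs no independence across calls) simply make explicit what the paper leaves implicit.
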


\section{Numerical Experiments}
\label{sec:numerics}

We present some numerical results in this section\textcolor{blue}. Section~\ref{sec:implement} is for implementing the proposed tensor order recovery algorithms, which shows the correctness and efficiency of the proposed approach. In Section~\ref{sec:compare_ballani}, we compare our work with \cite{ballani2014tree} that heuristically identifies the structure of tree tensor networks. We evaluate our TR order recovery algorithm on a more practical model, the Potts model, in Section~\ref{sec:potts}.

\subsection{Implementation of the proposed algorithms for TR/TT format}
\label{sec:implement}
Throughout this subsection, we set $d=8$ and $\vec{n}=(4,4,\dots,4)$. $\vec{r}$ is chosen as $ (3,3,\dots,3)$ for TR format and $(1,3,3,\dots,3)$ for TT format. One can verify that Assumption~\ref{asp:exist_R_TR} and Assumption~\ref{asp:exist_R_TT} are satisfied for $R=2$. For a fixed noise level $\sigma_e$, we repeat the following procedure for $10000$ times:
\begin{itemize}
    \item Sample an underlying permutation from the uniform distribution $\tau\sim\mathcal{U}(S_d)$.
    \item Sample an element $\tu{}\in\calU_{\vec{r},\vec{n},\tau}^d$ with each entry of $\tu{}$ being i.i.d. $\mathcal{N}(0,1)$.
    \item Run Algorithm~\ref{alg: whole_ring}, Algorithm~\ref{alg: whole_train}, or their majority-vote versions with $5$ voters, where each entry of $\mathbf{T} = \varphi(\tu{},\tau)$ is observed with noise from $\mathcal{N}(0,\sigma_e^2)$. Denote the output by $\tau'$.
\end{itemize}
We count the number of trials with $\tau'\in\CPTR^d(\tau)$ or $\tau'\in\CPTT^d(\tau)$ and divide it by $10000$ to obtain an estimate of the probability that Algorithm~\ref{alg: whole_ring}, Algorithm~\ref{alg: whole_train}, or their majority-vote versions, obtain the correct order/underlying graph. We then repeat the experiments with a modified method for generating the data, while keeping all other settings unchanged. Specifically, the entries of $\tu{}$ are independent, with $\tu{i}(k_i,x_{\tau(i)},k_{i+1})\sim \mathcal{N}(0,0.1^2)$ if $k_i=3$ or $k_{i+1}=3$, and $\tu{i}(k_i,x_{\tau(i)},k_{i+1})\sim \mathcal{N}(0,1)$ otherwise. This dataset is nearly rank-deficient, as it can be seen as a perturbation from a TR format with $\vec{r}=(2,2,\dots,2)$ or a TT format $\vec{r}=(1,2,\dots,2)$, in contrast to the previous dataset, which is viewed full-rank.

The results are shown in Figure~\ref{fig:numericsTR} for TR format and in Figure~\ref{fig:numericsTT} for TT format, respectively. One can see that the probability of correct recovery is $1$ when $\sigma_e=0$ and is high when $\sigma_e$ is small, which fits the theory established in Section~\ref{sec:theory}. It is also evident that the robustness on the full-rank data is significantly better than that on the nearly rank-deficient data. As discussed in Section~\ref{sec:robust}, this is because the singular value threshold is small for the nearly rank-deficient data. In addition, the majority vote can significantly improve the robustness of our approach even if the number of voters is small. 

\begin{figure}[htb!]
    \centering
    \includegraphics[width=0.75\textwidth]{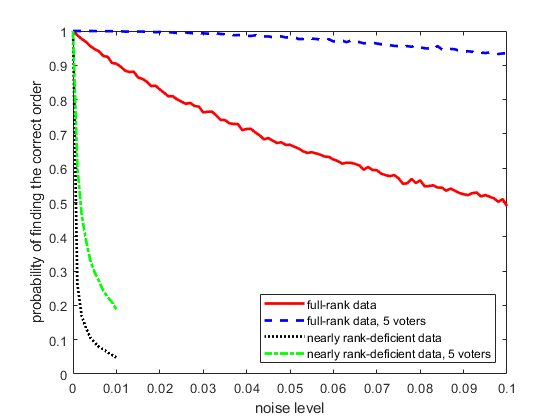}
    \caption{Numerical results for tensor ring format. The probability of correct recovery is $1$ when there is no noise. The robustness is much better on full-rank data compared to nearly rank-deficient data. Using majority vote with $5$ voters significantly improves the robustness of the algorithm.} 
    \label{fig:numericsTR}
\end{figure}

\begin{figure}[htb!]
    \centering
    \includegraphics[width=0.75\textwidth]{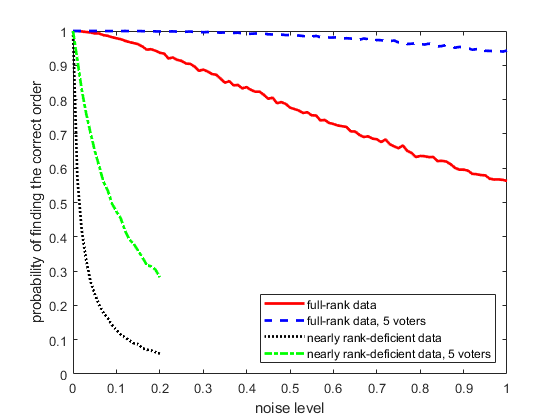}
    \caption{Numerical results for tensor train format. The probability of correct recovery is $1$ when there is no noise.
    The proposed method is more robust on full-rank data than the nearly rank-deficient data. Using majority vote with $5$ voters significantly improves the robustness of the algorithm.} 
    \label{fig:numericsTT}
\end{figure}

Another observation is that the algorithms for TT format tensor network recovery is more robust again the observation noise than those for TR format. More specifically, the curves of probability in Figure~\ref{fig:numericsTR} and Figure~\ref{fig:numericsTT} are similar although the maximal noise levels are significantly different: $0.1$ for TR format and $1$ for TT format. 
This is because when recovering the partial information, i.e., the order of $3$ or $4$ indices,  Algorithm~\ref{alg: 4index} samples a matrix with size of $n_{\max}^2\times n_{\max}^2$, which is much larger than the matrix sampled in Algorithm~\ref{alg: 3index} whose size is at most $n_{\max}\times n_{\max}^2$. According to Theorem~\ref{thm:large_sing_value_gauss} and Theorem~\ref{thm:weyl_ineq}, for a fixed noise level, the perturbation of singular values of a matrix has a larger upper bound when the size of the matrix is larger. The perturbation of singular subspaces follows similar rules (see e.g., \cite{cai2018rate}*{Theorem 3}). 

\subsection{Comparison with previous work for TT format}
\label{sec:compare_ballani}
We compare our approach with the one proposed in \cite{ballani2014tree} in this subsection. The method introduced in \cite{ballani2014tree} aims to recover a general tree structure of a tensor network. The method starts by initializing with a discrete partition $\{\{1\},\{2\},\dots,\{d\}\}$ and subsequently clusters the partition based on some minimal-rank condition. Note that the TR format does not admit a tree structure and hence we only compare our method with the one in \cite{ballani2014tree} for the TT format. An additional insight is that the graph associated with the TT structure is a very special tree, which means it is unnecessary to implement the algorithm in \cite{ballani2014tree} for the most general setting. Alternatively, we first select an endpoint and the TT chain; subsequently, we extend the selected subchain by adding new nodes, guided by the minimal-rank condition. Note that the subchain extension is in the same spirit as clustering the existing subchain with a new node. The numerical comparison between Algorithm~\ref{alg: whole_train} (with five voters) and that of \cite{ballani2014tree} is shown in Figure~\ref{fig:numerics_compare} for $d=6$, $\vec{n}=(4,4,\dots,4)$, $\vec{r}=(1,3,3,\dots,3)$, with other settings being the same as the full-rank experiments in Section~\ref{sec:implement}. It can be seen that both methods by us and by \cite{ballani2014tree} can recover the correct order in the noiseless case, but our method performs more robustly against the observation error. It is worth noting that we implement the method in \cite{ballani2014tree} while estimating ranks based on observation from all entries of $\mathbf{T}$. This process entails much higher complexity compared to Algorithm~\ref{alg: whole_train}, which only makes queries to down-sampled tensors. These distinctions show our proposed approach is more robust and computationally efficient compared to the approach in \cite{ballani2014tree}.

\begin{figure}[htb!]
    \centering
    \includegraphics[width=0.75\textwidth]{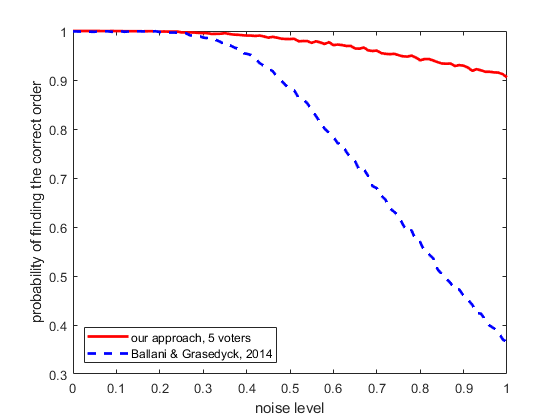}
    \caption{Numerical comparison between Algorithm~\ref{alg: whole_train} (with five voters) and the method proposed in \cite{ballani2014tree} for tensor train format. 
    We observe all entries of $\mathbf{T}$ for the rank estimation when implementing the method in \cite{ballani2014tree} and only observe down-sampled tensors when implementing our approach. Even if we allow the method of \cite{ballani2014tree} to have larger query complexity, our work still supersedes \cite{ballani2014tree} in the sense of having significantly larger probability of finding the correct order.}
    \label{fig:numerics_compare}
\end{figure}

\subsection{Potts model}
\label{sec:potts}

We demonstrate the efficacy of our algorithm for identifying the permutation for a Potts model \cites{jha2022tensor,wu1982potts} when the underlying geometry is unknown. Potts model is a generalization of the Ising model used in statistical physics: Consider a many-body system with $d$ sites/spins and $r$ different spin values and only adjacent sites on a ring (with an unknown permutation $\tau\in S_d$) admit interactions. Specifically, the Hamiltonian is given by
\begin{equation*}
    H_\tau(k_1,k_2,\dots,k_d) = \sum_{i=1}^d J_{\tau(i)}(k_{\tau(i)},k_{\tau(i+1)}),
\end{equation*}
where $k_{\tau(i)}$ is the spin value at the $i$-th site on the ring and $J_{\tau(i)}\in\bR^{r\times r}$ is a symmetric matrix representing the interaction pattern of the $i$-th and $(i+1)$-st sites on the ring. This Hamiltonian leads to the following free energy
\begin{equation}\label{eq:free_energy}
    \begin{split}
        f_\tau(J_1,J_2,\dots,J_d) & = -\frac{1}{\beta} \log \left(\sum_{k_1,k_2,\dots,k_d=1}^r e^{-\beta H_\tau(k_1,k_2,\dots,k_d)}\right) \\
        & = -\frac{1}{\beta} \log \left(\tr\left(\prod_{i=1}^d e^{-\beta J_{\tau(i)}}\right)\right),
    \end{split}
\end{equation}
where $\beta$ is the inverse temperature and $e^{A}$ denotes the element-wise exponential of a matrix $A$.

Our experiment is a generalization of the Ising model experiment in \cite{khoo2021efficient}. We set $d=6$, $r=3$, $\beta = 10$, and let $J_1,J_2,\dots,J_d$ take values among $n=5$ pre-determined $r\times r$ symmetric matrices with diagonal and off-diagonal entries sampled i.i.d. from $\mathcal{N}(0,1)$. The free energy $f_\tau$ can thus be viewed as a $d$-th order tensor with physical dimension being $\vec{n}=(5,\dots,5)$ and we assume that the entries of $f_\tau$ are observed with i.i.d. noise $\mathcal{N}(0,\sigma_e^2)$ with some noise level $\sigma_e$. We then apply Algorithm~\ref{alg: 4index} and Algorithm~\ref{alg: whole_ring} with $R=2$ to recover the underlying permutation $\tau$ and the results of 10000 independent trials are shown in Figure~\ref{fig:Potts}. 

\begin{figure}[htb!]
    \centering
    \includegraphics[width=0.75\textwidth]{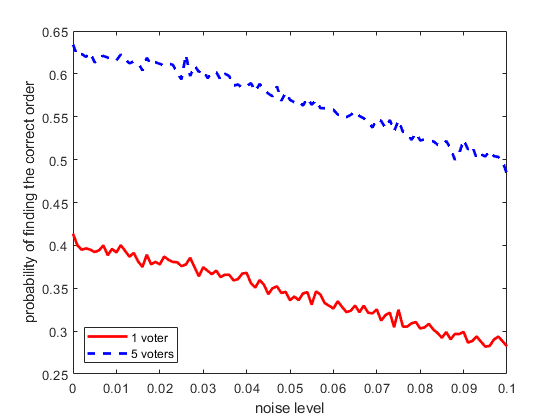}
    \caption{Numerical results for the Potts model ($3$ spin values) with TR format. The probability of correct recovery is relatively high even though the free energy $f_\tau$ is  not precisely of TR format with low bond dimension.}
    \label{fig:Potts}
\end{figure}

It can be seen from Figure~\ref{fig:Potts} that our algorithms can correctly recover the underlying permutation for the Potts model with reasonably high probability. Note that the free energy $f_\tau$ in \eqref{eq:free_energy} is not precisely in the TR format due to the logarithm, which means that Assumption~\ref{asp:exist_R_TR} does not hold. Consequently, the probability of finding the correct order is not $1$ in the noiseless case. Nevertheless, the success probability is still quite high, which further demonstrates the effectiveness of our algorithms.

\section{Conclusion and Discussions}
\label{sec:conclude}

We propose new algorithms for recovering the underlying graphs of the tensor ring and tensor train format, by querying entries of the tensor. The proposed algorithms obtain the orders of the selected $3$ or $4$ indices by constructing matricizations using down-sampling and comparing their ranks. The algorithm follows from the observation that the resulting matricization has a smaller rank if the grouping of indices is compatible with the underlying order of indices. These methods are justified by theory -- almost sure correctness in the noiseless case and high-probability bounds for cases with observation error. In the numerical experiments, we verify these theoretical results and observe that a proper strategy of combining partial information can significantly improve the robustness against the observation error.

We also acknowledge some limitations of this work.  First, specific assumptions regarding the bond dimension and the physical dimension are necessary to ensure the provable efficiency of the proposed algorithm and the uniqueness of the underlying permutation. Second, the level of robustness depends on the singular value threshold, which is generally unclear in a quantitative sense. Third, our analysis does not cover incomplete observations, which could be a promising area for future research given existing methods from tensor/matrix completion and SVD for incomplete observations. Another important future direction is the recovery of underlying graphs for general tensor networks. All these topics could be interesting avenues for future research.

\bibliographystyle{amsxport}
\bibliography{references}

\appendix

\section{Uniqueness of Permutations}
\label{sec:unique_perm}

This section establishes the uniqueness results for the underlying permutation, in the sense that two permutations that can represent the same tensor must be in the same equivalence class defined in \eqref{equiv_class_TR} or \eqref{equiv_class_TT}. The results require the same assumptions as in Section~\ref{sec:correct} and are stated in Theorem~\ref{thm:unique_perm_TR} and Theorem~\ref{thm:unique_perm_TT} for TR and TT format, respectively.

\begin{theorem}\label{thm:unique_perm_TR}
    Let $\tau,\tau'\in S_d$ and let $\tu{} \in \calU_{\vec{r},\vec{n},\tau}^d, \tu{}' \in \calU_{\vec{r}',\vec{n},\tau'}^d$ satisfy $\varphi(\tu{},\tau) = \varphi(\tu{}',\tau')$. Suppose that $(\vec{r},\vec{n})$ and $(\vec{r}',\vec{n})$ both satisfy Assumption~\ref{asp:exist_R_TR} with the same $R\in\mathbb{N}_+$. Then we have $\tau'\in \CPTR^d(\tau)$ unless $\tu{}$ or $\tu{}'$ is located in some measure-zero subset of $\calU_{\vec{r},\vec{n},\tau}^d$ or $\calU_{\vec{r}',\vec{n},\tau'}^d$.
\end{theorem}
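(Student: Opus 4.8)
The plan is to reduce the statement, via Proposition~\ref{prop:equiv_class_TR}, to a rank comparison for a single matricization of the common tensor $\tT := \varphi(\tu{},\tau) = \varphi(\tu{}',\tau')$, and then to bound that rank from below using the representation $(\tu{}',\tau')$ and from above using the representation $(\tu{},\tau)$. By Proposition~\ref{prop:equiv_class_TR} it suffices to show that, outside a measure-zero subset of $\calU^d_{\vec{r}',\vec{n},\tau'}$, for every $1 \le j_1 < j_2 < j_3 < j_4 \le d$ the quadruple $(i_1,i_2,i_3,i_4)$ with $i_s := \tau'(j_s)$ is of correct order with respect to $\tau$. As there are finitely many such quadruples, I would fix one, fix the down-sampling indices $y_i$ for $i \notin \{i_1,i_2,i_3,i_4\}$, and form the matrices in \eqref{M1234_TR}--\eqref{M1423_TR}; crucially, these are determined by $\tT$ alone and not by which representation is used to compute its entries.

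For the lower bound: since $i_s = \tau'(j_s)$ and $j_1<j_2<j_3<j_4$, the quadruple $(i_1,i_2,i_3,i_4)$ is of correct order with respect to $\tau'$, so I would apply Lemma~\ref{lem:rk_lowbd_TR} to $(\tu{}',\tau')$ with $R_1=\cdots=R_4=R$ --- admissible because $(\vec{r}',\vec{n})$ satisfies Assumption~\ref{asp:exist_R_TR} with the same $R$ (so $\min_j r'_j \ge R$ and $\min_i n_i \ge R^2$) --- to conclude $\rk(M_{(i_1,i_3),(i_2,i_4)}) \ge R^4$ for $\tu{}'$ outside a measure-zero set. For the upper bound I would argue by contradiction, assuming $(i_1,i_2,i_3,i_4)$ is \emph{not} of correct order with respect to $\tau$. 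The combinatorial point is that, writing the positions $\tau^{-1}(i_1),\dots,\tau^{-1}(i_4)$ in their cyclic order around $\{1,\dots,d\}$, the partition $\{i_1,i_3\}\sqcup\{i_2,i_4\}$ is non-crossing exactly when $(i_1,i_2,i_3,i_4)$ fails to be of correct order; so under the assumption both $\{i_1,i_3\}$ and $\{i_2,i_4\}$ form cyclically adjacent pairs of positions. Relabelling the four indices as $(i'_1,i'_2,i'_3,i'_4)=(\tau(\ell_1),\dots,\tau(\ell_4))$ with $\ell_1<\ell_2<\ell_3<\ell_4$, one of $M_{(i'_1,i'_2),(i'_3,i'_4)}$ or $M_{(i'_1,i'_4),(i'_2,i'_3)}$ equals $M_{(i_1,i_3),(i_2,i_4)}$ up to permutations within index pairs and a transposition; hence Lemma~\ref{lem:rk_upbd_TR} gives $\rk(M_{(i_1,i_3),(i_2,i_4)}) \le (\max_j r_j)^2 < R^4$, where the strict inequality uses $\max_j r_j < R^2$ from Assumption~\ref{asp:exist_R_TR} for $(\vec{r},\vec{n})$. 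This bound holds for \emph{every} $\tu{}$, in particular ours, and concerns the same matrix as the lower bound, so the two estimates contradict each other whenever $\tu{}'$ avoids the measure-zero set. Taking the union of the exceptional sets over all quadruples and invoking Proposition~\ref{prop:equiv_class_TR} finishes; note that no exceptional set is needed on the $\tu{}$ side, so the ``$\tu{}$ or $\tu{}'$'' clause in the statement is a fortiori true.

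The hard part will be the combinatorial bookkeeping in the upper-bound step: proving the equivalence between ``$(i_1,i_2,i_3,i_4)$ not of correct order with respect to $\tau$'' and ``$\{i_2,i_4\}$ is a cyclically adjacent pair of positions'', and then, in each of the (at most four) sub-cases for which adjacent pair $\{i_2,i_4\}$ realizes, checking that $M_{(i_1,i_3),(i_2,i_4)}$ is indeed one of the two matricizations that Lemma~\ref{lem:rk_upbd_TR} controls. An alternative that avoids the relabelling is to re-run the factorization in the proof of Lemma~\ref{lem:rk_upbd_TR} for the non-crossing partition $\{i_1,i_3\}\sqcup\{i_2,i_4\}$ directly: since this partition does not cross the cyclic order induced by $\tau$, the corresponding matricization of $\tT$ factors as a product with inner dimension at most $(\max_j r_j)^2 < R^4$. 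Either route supplies the deterministic upper bound, and the rest of the argument is unchanged.
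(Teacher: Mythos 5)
Your proposal is correct and follows essentially the same route as the paper's proof: both exploit that the matricizations are determined by the common tensor alone, apply Lemma~\ref{lem:rk_lowbd_TR} to the $(\tu{}',\tau')$ representation for the generic lower bound $R^4$ and Lemma~\ref{lem:rk_upbd_TR} to the $(\tu{},\tau)$ representation for the deterministic upper bound $(\max_j r_j)^2<R^4$, and conclude via Proposition~\ref{prop:equiv_class_TR}. The only difference is bookkeeping: the paper anchors the quadruple in $\tau$-order and argues by contradiction from one witnessing quadruple, while you anchor in $\tau'$-order and make explicit the non-crossing relabeling step that the paper leaves implicit.
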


\begin{proof}
    Suppose that $\tau'\notin \CPTR^d(\tau)$. Then there must exist four distinct indices $i_1,i_2,i_3,i_4\in\{1,2,\dots,d\}$ such that $(i_1,i_2,i_3,i_4)$ is of the correct order with respect to $\tau$ but is not of the correct order with respect to $\tau'$. By Lemma~\ref{lem:rk_upbd_TR} and Lemma~\ref{lem:rk_lowbd_TR}, it holds almost surely that
    \begin{equation}\label{eq:MM'_TR}
        \rk\big(M_{(i_1,i_2),(i_3,i_4)}'\big) \geq R^4 > \rk\big(M_{(i_1,i_2),(i_3,i_4)}\big),
    \end{equation}
    where $M_{(i_1,i_2),(i_3,i_4)}$ and $M_{(i_1,i_2),(i_3,i_4)}'$ are the matricization of $\varphi(\tu{},\tau)$ and $\varphi(\tu{}',\tau')$ defined in \eqref{M1234_TR}. However, it follows from $\varphi(\tu{},\tau) = \varphi(\tu{}',\tau')$ that $M_{(i_1,i_2),(i_3,i_4)}=M_{(i_1,i_2),(i_3,i_4)}'$, which contradicts \eqref{eq:MM'_TR}.
\end{proof}

\begin{theorem}\label{thm:unique_perm_TT}
    Let $\tau,\tau'\in S_d$ and let $\tu{} \in \calU_{\vec{r},\vec{n},\tau}^d, \tu{}' \in \calU_{\vec{r}',\vec{n},\tau'}^d$ with $r_1=r_1'=1$ satisfy $\varphi(\tu{},\tau) = \varphi(\tu{}',\tau')$. Suppose that $(\vec{r},\vec{n})$ and $(\vec{r}',\vec{n})$ both satisfy Assumption~\ref{asp:exist_R_TT} with the same $R\in\mathbb{N}_+$. Then we have $\tau'\in \CPTT^d(\tau)$ unless $\tu{}$ or $\tu{}'$ is located in some measure-zero subset of $\calU_{\vec{r},\vec{n},\tau}^d$ or $\calU_{\vec{r}',\vec{n},\tau'}^d$.
\end{theorem}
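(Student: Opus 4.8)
The plan is to mimic the proof of Theorem~\ref{thm:unique_perm_TR} almost verbatim, replacing the four-index matricizations of the tensor ring setting by the three-index matricizations of the tensor train setting and invoking the tensor train analogues of Lemma~\ref{lem:rk_upbd_TR} and Lemma~\ref{lem:rk_lowbd_TR} (the rank estimates that underlie the proof of Theorem~\ref{thm:TT} in the appendix). The whole argument is by contradiction and reduces to a single short chain of inequalities on the ranks of one well-chosen matricization, together with the identity of matricizations forced by $\varphi(\tu{},\tau)=\varphi(\tu{}',\tau')$.

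First I would assume $\tau'\notin\CPTT^d(\tau)$. Since the equivalence classes partition $S_d$, membership is symmetric in $\tau$ and $\tau'$, so by Proposition~\ref{prop:equiv_class_TT} there exist three distinct indices $i_1,i_2,i_3\in\{1,2,\dots,d\}$ such that $(i_1,i_2,i_3)$ is of the correct order with respect to $\tau$ but not with respect to $\tau'$. After relabeling I may assume $\tau^{-1}(i_1)<\tau^{-1}(i_2)<\tau^{-1}(i_3)$, so $i_2$ is the middle of the triple on the $\tau$-path; on the $\tau'$-path $i_2$ is then not the middle of the triple, and by swapping $i_1\leftrightarrow i_3$ if necessary I may assume $i_1$ is, i.e.\ $(\tau')^{-1}(i_1)$ lies strictly between $(\tau')^{-1}(i_2)$ and $(\tau')^{-1}(i_3)$. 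This forces $(\tau')^{-1}(i_1)\in\{2,\dots,d-1\}$, hence $n_{i_1}\ge R^2$ by Assumption~\ref{asp:exist_R_TT} applied to $(\vec{r}',\vec{n})$; also $n_i\ge R$ for every $i$.

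Next I would compare the matricization $M_{i_1,(i_2,i_3)}$ of $\varphi(\tu{},\tau)$ with the matricization $M_{i_1,(i_2,i_3)}'$ of $\varphi(\tu{}',\tau')$. On the $\tau$ side, $i_1$ is an endpoint of the triple, so $M_{i_1,(i_2,i_3)}$ is a submatrix of a bipartition matricization of $\varphi(\tu{},\tau)$ cut at a single tensor train bond; the tensor train analogue of Lemma~\ref{lem:rk_upbd_TR} gives $\rk(M_{i_1,(i_2,i_3)})\le\max_{2\le j\le d}r_j<R^2$. On the $\tau'$ side, $i_1$ is the middle of the triple and $n_{i_1}\ge R^2$, $n_{i_2},n_{i_3}\ge R$, so the tensor train analogue of Lemma~\ref{lem:rk_lowbd_TR} gives $\rk(M_{i_1,(i_2,i_3)}')\ge R^2$ for all $\tu{}'$ outside a measure-zero subset of $\calU_{\vec{r}',\vec{n},\tau'}^d$. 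Combining, outside a measure-zero subset of $\calU_{\vec{r},\vec{n},\tau}^d$ or $\calU_{\vec{r}',\vec{n},\tau'}^d$ we obtain
\[
    \rk\bigl(M_{i_1,(i_2,i_3)}'\bigr)\ge R^2>\rk\bigl(M_{i_1,(i_2,i_3)}\bigr).
\]
But $\varphi(\tu{},\tau)=\varphi(\tu{}',\tau')$ implies $M_{i_1,(i_2,i_3)}=M_{i_1,(i_2,i_3)}'$, a contradiction. Taking the union of the finitely many exceptional sets (one per triple $(i_1,i_2,i_3)$) keeps the bad set measure zero, so $\tau'\in\CPTT^d(\tau)$ generically.

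The step I expect to demand the most care is the relabeling/endpoint bookkeeping in the second paragraph: because the tensor train format treats the two ends of the path differently (Assumption~\ref{asp:exist_R_TT} only lower-bounds $n_{\tau(1)},n_{\tau(d)}$ by $R$, not $R^2$), one must be sure that the index that is ``middle of the triple on the $\tau'$-path'' is genuinely an interior physical index of that path, so that its physical dimension is at least $R^2$ and the lower-bound lemma actually applies. The relabelings above are arranged precisely so that this holds automatically; everything else is a direct transcription of the $\CPTR$ argument with ``$R^4$'' replaced by ``$R^2$''. One should also verify that the tensor train rank lemmas are stated in enough generality in the appendix (not only for the specific matricizations appearing in Algorithm~\ref{alg: 3index}), but this is exactly what is already needed to prove Theorem~\ref{thm:TT}, so no extra work is required.
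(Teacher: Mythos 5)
Your proposal is correct and takes essentially the same route as the paper's own proof: assume $\tau'\notin\CPTT^d(\tau)$, pick a triple that is of the correct order with respect to $\tau$ but not $\tau'$, and derive a contradiction by applying Lemma~\ref{lem:rk_upbd_TT} and Lemma~\ref{lem:rk_lowbd_TT} to the common matricization \eqref{M123_TT} of the two equal tensors. Your explicit relabeling so that the index separated out is the $\tau'$-middle of the triple (hence an interior site of the $\tau'$-path with physical dimension at least $R^2$) while remaining an outer index of the triple on the $\tau$-path is, if anything, slightly more careful than the paper's terser write-up, which leaves that bookkeeping implicit.
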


\begin{proof}
    Suppose that $\tau'\notin \CPTT^d(\tau)$. Then there must exist three distinct indices $i_1,i_2,i_3\in\{1,2,\dots,d\}$ such that $(i_1,i_2,i_3)$ is of the correct order with respect to $\tau$ but is not of the correct order with respect to $\tau'$. By Lemma~\ref{lem:rk_upbd_TT} and Lemma~\ref{lem:rk_lowbd_TT}, it holds almost surely that
    \begin{equation}\label{eq:MM'_TT}
        \rk\big(M_{i_1,(i_2,i_3)}'\big) \geq R^2 > \rk\big(M_{i_1,(i_2,i_3)}\big),
    \end{equation}
    where $M_{i_1,(i_2,i_3)}$ and $M_{i_1,(i_2,i_3)}'$ are the matricization of $\varphi(\tu{},\tau)$ and $\varphi(\tu{}',\tau')$ defined in \eqref{M123_TT}. However, it follows from $\varphi(\tu{},\tau) = \varphi(\tu{}',\tau')$ that $M_{i_1,(i_2,i_3)}=M_{i_1,(i_2,i_3)}'$, which contradicts \eqref{eq:MM'_TT}.
\end{proof}

\section{Deferred Proofs}
We collect all deferred proofs in this section.

\subsection{Proof of Theorem~\ref{thm:TT}}
The proof follows similar ideas and techniques as in the proof of Theorem~\ref{thm:TR}. We also use two lemmas to establish the upper bound of the rank of $M_{i_1,(i_2,i_3)}$ and $M_{i_3,(i_1,i_2)}$, as well as the lower bound of the rank of $M_{i_2,(i_1,i_3)}$, given that $(i_1,i_2,i_3)$ is of the correct order with respect to the underlying permutation $\tau$.

 \begin{lemma}\label{lem:rk_upbd_TT}
 Suppose that $i_s = \tau(j_s)$ for $s=1,2,3$, where $1\leq j_1<j_2<j_3\leq d$. Then for any $\tu{}\in\calU_{\vec{r},\vec{n},\tau}^d$, it holds that 
 \begin{equation}\label{rank_M1_upperbd_TT}
     \rk(M_{i_1,(i_2,i_3)})\leq \min_{j_1+1\leq j \leq j_2} r_j,
 \end{equation}
 and that
 \begin{equation}\label{rank_M3_upperbd_TT}
     \rk(M_{i_3,(i_1,i_2)})\leq \min_{j_2+1\leq j \leq j_3} r_j.
 \end{equation}
 \end{lemma}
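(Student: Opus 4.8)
The plan is to adapt the factorization argument from the proof of Lemma~\ref{lem:rk_upbd_TR} to the TT setting, where it becomes noticeably simpler: since $r_1 = 1$, the defining expression of $\varphi(\tu{},\tau)$ is an honest matrix product along a path rather than a trace around a loop, so a single cut of the chain suffices, as opposed to the two cuts needed for the ring. I would prove \eqref{rank_M1_upperbd_TT} in detail and note that \eqref{rank_M3_upperbd_TT} follows by the symmetric argument with the roles of the two endpoints of the path interchanged.

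For \eqref{rank_M1_upperbd_TT}, the key step is to pick the cut location $j' = \arg\min_{j_1+1\le j\le j_2} r_j$ and record the ordering $j_1 \le j'-1 < j' \le j_2 < j_3$. I would then write the entry $M_{i_1,(i_2,i_3)}(x_{i_1},(x_{i_2},x_{i_3}))$ as the product $\tu{\tau(1)}(\cdot)\cdots\tu{\tau(d)}(\cdot)$, with every argument frozen at the sampled $y_i$ except at the three free positions $i_1,i_2,i_3$, and split this product after factor $j'-1$. Because $i_1=\tau(j_1)$ sits among positions $1,\dots,j'-1$ while $i_2=\tau(j_2)$ and $i_3=\tau(j_3)$ sit among positions $j',\dots,d$, the left segment $\tu{\tau(1)}(\cdot)\cdots\tu{\tau(j'-1)}(\cdot)\in\bR^{1\times r_{j'}}$ depends only on $x_{i_1}$, and the right segment $\tu{\tau(j')}(\cdot)\cdots\tu{\tau(d)}(\cdot)\in\bR^{r_{j'}\times 1}$ depends only on $(x_{i_2},x_{i_3})$. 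Collecting the left segments into a matrix $N_1\in\bR^{n_{i_1}\times r_{j'}}$ and the right segments into $N_2\in\bR^{r_{j'}\times(n_{i_2}n_{i_3})}$ gives $M_{i_1,(i_2,i_3)}=N_1N_2$, hence $\rk(M_{i_1,(i_2,i_3)})\le r_{j'}=\min_{j_1+1\le j\le j_2} r_j$. For \eqref{rank_M3_upperbd_TT} I would instead cut after factor $j''-1$ with $j''=\arg\min_{j_2+1\le j\le j_3} r_j$, so that the free indices $i_1,i_2$ both land in the left segment and $i_3$ in the right segment, and the identical factorization argument yields the bound.

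I do not anticipate a genuine obstacle: this is a purely deterministic rank inequality valid for every $\tu{}\in\calU_{\vec{r},\vec{n},\tau}^d$, so none of the polynomial or measure-zero machinery of the lower-bound lemmas is required. The only place demanding a little care is the index bookkeeping --- checking that the chosen cut position $j'$ (resp.\ $j''$) really does separate $i_1$ from $\{i_2,i_3\}$ (resp.\ $\{i_1,i_2\}$ from $i_3$), which is immediate from $j_1<j'\le j_2<j_3$ (resp.\ $j_1<j_2<j''\le j_3$) --- together with keeping straight the shape conventions $r_1=r_{d+1}=1$ for the TT format, so that the two segments really are a row vector and a column vector of length $r_{j'}$.
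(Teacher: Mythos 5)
Your proposal is correct and follows essentially the same route as the paper: cut the chain at a bond of minimal dimension between the relevant positions and factor the matricization through that bond to bound the rank by $r_{j'}$ (resp.\ $r_{j''}$). The only cosmetic difference is that the paper factors the full matricization $\Tilde{M}$ of the tensor and then observes that $M_{i_1,(i_2,i_3)}$ is a submatrix, whereas you factor the downsampled matrix directly; both are valid and equivalent.
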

 
 \begin{proof}
 We only prove \eqref{rank_M1_upperbd_TT} as \eqref{rank_M3_upperbd_TT} will hold by the same reasoning. Consider any $j\in\{j_1+1,j_1+2,\dots,j_2\}$ and the matrix $\Tilde{M}\in\bR^{(n_{\tau(1)}n_{\tau(2)}\cdots n_{\tau(j-1)})\times (n_{\tau(j)}n_{\tau(j+1)}\cdots n_{\tau(d)})}$ defined via
 \begin{align*}
     &\Tilde{M}\left((x_{\tau(1)}, x_{\tau(2)},\dots,x_{\tau(j-1)}),(x_{\tau(j)},x_{\tau(j+1)},\dots,x_{\tau(d)})\right)\\
     =&\tau(\tu{},\tau)(x_1,x_2,\dots,x_d)\\
     =&\tu{\tau(1)}(x_{\tau(1)})\tu{\tau(2)}(x_{\tau(2)})\cdots \tu{\tau(j-1)}(x_{\tau(j-1)})\\
     & \cdot \tu{\tau(j)}(x_{\tau(j}))\tu{\tau(j+1)}(x_{\tau(j+1)})\cdots \tu{\tau(d)}(x_{\tau(d)}),
 \end{align*}
  for $1\leq x_i\leq n_i$, $1\leq i\leq d$. Note that 
 \begin{equation*}
    \tu{\tau(1)}(x_{\tau(1)})\tu{\tau(2)}(x_{\tau(2)})\cdots \tu{\tau(j-1)}(x_{\tau(j-1)})\in\bR^{1\times r_j},
 \end{equation*}
 and
 \begin{equation*}
     \tu{\tau(j)}(x_{\tau(j}))\tu{\tau(j+1)}(x_{\tau(j+1)})\cdots \tu{\tau(d)}(x_{\tau(d)})\in\bR^{r_j\times 1}.
 \end{equation*}
 We can know that $\Tilde{M}$ is the product of a matrix of size $(n_{\tau(1)}n_{\tau(2)}\cdots n_{\tau(j-1)})\times r_j$ and a matrix of size $r_j \times (n_{\tau(j)}n_{\tau(j+1)}\cdots n_{\tau(d)})$ and hence that $\rk(\Tilde{M})\leq r_j$. Therefore, it holds that $\rk(M_{i_1,(i_2,i_3)})\leq r_j$ since $M_{i_1,(i_2,i_3)}$ is a submatrix of $\Tilde{M}$. Then \eqref{rank_M1_upperbd_TT} holds by taking the minimality of $j\in\{j_1+1,j_1+2,\dots,j_2\}$.
 \end{proof}

 \begin{lemma}\label{lem:rk_lowbd_TT}
 Suppose that $i_s = \tau(j_s)$ for $s=1,2,3$, where $1\leq j_1<j_2<j_3\leq d$, and that there exist $R_1$ and $R_2$ satisfying
 \begin{equation*}
     R_1\leq \min_{j_1+1\leq j \leq j_2} r_j,\ R_2\leq \min_{j_2+1\leq j \leq j_3} r_j, 
 \end{equation*}
 and
 \begin{equation*}
     n_{i_1}\geq R_1,\  n_{i_2}\geq R_1 R_2,\ n_{i_3}\geq R_2.
 \end{equation*}
 Then there exists a measure-zero subset $\Omega\in \calU_{\vec{r},\vec{n},\tau}^d$, such that for any $\tu{}\in \calU_{\vec{r},\vec{n},\tau}^d\backslash \Omega$, it holds that
 \begin{equation*}
     \rk(M_{i_2,(i_3,i_1)})\geq R_1 R_2.
 \end{equation*}
 \end{lemma}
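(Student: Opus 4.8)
The plan is to follow the proof of Lemma~\ref{lem:rk_lowbd_TR} almost line for line, replacing the ring by a path and the four boundary tensors by three. First I would reduce the measure-zero claim to a non-vanishing statement exactly as in the tensor ring case: for any fixed choice of the auxiliary indices $y_i$, $i\in\{1,\dots,d\}\backslash\{i_1,i_2,i_3\}$, each $(R_1R_2)\times(R_1R_2)$ minor of $M_{i_2,(i_3,i_1)}$ is a polynomial in the entries of $\tu{}$, and the set of $\tu{}\in\calU_{\vec{r},\vec{n},\tau}^d$ with $\rk(M_{i_2,(i_3,i_1)})<R_1R_2$ is precisely the common zero locus of this finite family. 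Since the zero set of a nonzero polynomial is Lebesgue-null and there are only finitely many choices of $(y_i)$, it suffices to exhibit, for each such choice, one $\tu{}$ with $\rk(M_{i_2,(i_3,i_1)})\ge R_1R_2$; the union of the resulting finitely many null sets is the desired $\Omega$.

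The substance is then the explicit construction. Writing $i_s=\tau(j_s)$, I split the path into the blocks $[1,j_1)$, $\{j_1\}$, $(j_1,j_2)$, $\{j_2\}$, $(j_2,j_3)$, $\{j_3\}$, $(j_3,d]$. On every node strictly interior to an open block I set its slice at the fixed index $y_i$ to a partial identity $\sum_{p=1}^{R}e_{p,r_{\tau^{-1}(i)}}e_{p,r_{\tau^{-1}(i)+1}}^{\top}$, taking $R=R_1$ on $(j_1,j_2)$, $R=R_2$ on $(j_2,j_3)$, and $R=1$ on the two outer blocks; these are admissible because $R_1\le\min_{j_1+1\le j\le j_2}r_j$ and $R_2\le\min_{j_2+1\le j\le j_3}r_j$ (and $1\le r_j$ trivially), so that the two outer blocks collapse to the row vector $e_{1,r_{j_1}}^{\top}$ and the column vector $e_{1,r_{j_3+1}}$ respectively, while the middle open blocks collapse to the two partial identities. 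On the boundary nodes I put ``selectors'': $\tu{i_1}(x_{i_1})=e_{1,r_{j_1}}e_{x_{i_1},r_{j_1+1}}^{\top}$ for $1\le x_{i_1}\le R_1$ (and $0$ otherwise), $\tu{i_3}(x_{i_3})=e_{x_{i_3},r_{j_3}}e_{1,r_{j_3+1}}^{\top}$ for $1\le x_{i_3}\le R_2$ (and $0$ otherwise), and, encoding a pair $(a,b)$ with $1\le a\le R_1$, $1\le b\le R_2$ by $x_{i_2}=(a-1)R_2+b$, $\tu{i_2}(x_{i_2})=e_{a,r_{j_2}}e_{b,r_{j_2+1}}^{\top}$ for $1\le x_{i_2}\le R_1R_2$ (and $0$ otherwise). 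These are legitimate because $n_{i_1}\ge R_1$, $n_{i_3}\ge R_2$, $n_{i_2}\ge R_1R_2$ provide enough physical room, and $R_1\le r_{j_1+1}\le r_{j_2}$, $R_2\le r_{j_2+1}\le r_{j_3}$ follow from the two minimum hypotheses.

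Finally I would telescope the matrix product $\tu{\tau(1)}(x_{\tau(1)})\cdots\tu{\tau(d)}(x_{\tau(d)})$ through the blocks: the left part feeds $e_{x_{i_1},r_{j_2}}^{\top}$ into node $i_2$, node $i_2$ turns it into $\delta_{a,x_{i_1}}e_{b,r_{j_2+1}}^{\top}$, and the right part contracts this with $\tu{i_3}(x_{i_3})$ and the final column vector to give $\delta_{a,x_{i_1}}\delta_{b,x_{i_3}}$ (all within the active ranges, and $0$ once any of $x_{i_1},x_{i_2},x_{i_3}$ exceeds its threshold) --- the same computation as in Lemma~\ref{lem:rk_lowbd_TR} with one Kronecker factor deleted. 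Restricting $M_{i_2,(i_3,i_1)}$ to the rows $x_{i_2}\in\{1,\dots,R_1R_2\}$ and the columns with $x_{i_1}\in\{1,\dots,R_1\}$, $x_{i_3}\in\{1,\dots,R_2\}$ then yields, after the relabeling $x_{i_2}\leftrightarrow(a,b)$, the $R_1R_2\times R_1R_2$ identity, so $\rk(M_{i_2,(i_3,i_1)})\ge R_1R_2$. I do not expect a genuine obstacle; the only points needing care are the degenerate endpoints $j_1=1$ and $j_3=d$, where the corresponding outer block is empty and $r_{j_1}=r_1=1$ or $r_{j_3+1}=r_1=1$, so the selector formulas for $i_1$ and $i_3$ degenerate harmlessly, and the bookkeeping verifying that every bond- and physical-dimension inequality invoked in the construction is furnished by one of the hypotheses $R_1\le\min_{j_1+1\le j\le j_2}r_j$, $R_2\le\min_{j_2+1\le j\le j_3}r_j$, $n_{i_1}\ge R_1$, $n_{i_2}\ge R_1R_2$, $n_{i_3}\ge R_2$.
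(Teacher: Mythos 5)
Your proposal is correct and follows essentially the same route as the paper's proof: the same reduction of the measure-zero claim to the non-vanishing of an $(R_1R_2)\times(R_1R_2)$ minor polynomial, the same explicit witness (selector slices $e_{1,r_{j_1}}e_{x_{i_1},r_{j_1+1}}^\top$, $e_{a,r_{j_2}}e_{b,r_{j_2+1}}^\top$, $e_{x_{i_3},r_{j_3}}e_{1,r_{j_3+1}}^\top$ at the three marked nodes, with partial identities of sizes $1$, $R_1$, $R_2$, $1$ on the intermediate blocks), and the same telescoping computation yielding $\delta_{a,x_{i_1}}\delta_{b,x_{i_3}}$ and hence an identity submatrix of size $R_1R_2$. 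One cosmetic remark: the chains $R_1\le r_{j_1+1}\le r_{j_2}$ and $R_2\le r_{j_2+1}\le r_{j_3}$ are not literally implied by the hypotheses (the middle inequalities need not hold); what you actually use, and what does follow from $R_1\le\min_{j_1+1\le j\le j_2}r_j$ and $R_2\le\min_{j_2+1\le j\le j_3}r_j$, is $R_1\le r_{j_1+1}$, $R_1\le r_{j_2}$, $R_2\le r_{j_2+1}$, and $R_2\le r_{j_3}$.
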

 
 \begin{proof}
 Similar to the proof of Lemma~\ref{lem:rk_lowbd_TR}, one only needs to construct a $\tu{}\in\calU_{\vec{r},\vec{n},\tau}^d$ such that the rank of the matrix $M_1$ associated with $\tu{}$ is at least $R_1 R_2$. Set
 \begin{equation*}
 \begin{split}
     & \tu{i_1}(x_{i_1}) =\begin{cases} e_{1,r_{j_1}} e_{q,r_{j_1 + 1}}^\top,&\text{if}\ x_{i_1} = q,\  1\leq q\leq R_1,\\
     0_{r_{j_1}\times r_{j_1+1}}, &\text{if}\  R_1 < x_{i_1} \leq n_{i_1},\end{cases} \\
     & \tu{i_2}(x_{i_2}) =\begin{cases} e_{p,r_{j_2}} e_{q,r_{j_2 + 1}}^\top,&\text{if}\ x_{i_2} = (p - 1) R_2 +q,\  1\leq p\leq R_1,\ 1\leq q \leq R_2,\\
     0_{r_{j_2}\times r_{j_2+1}}, &\text{if}\  R_1 R_2 < x_{i_2} \leq n_{i_2},\end{cases} \\
     & \tu{i_3}(x_{i_3}) =\begin{cases} e_{p,r_{j_3}} e_{1,r_{j_3 + 1}}^\top,&\text{if}\ x_{i_3} = p,\  1\leq p\leq R_2,\\
     0_{r_{j_3}\times r_{j_3+1}}, &\text{if}\  R_2 < x_{i_3} \leq n_{i_3},\end{cases}
 \end{split}
 \end{equation*}
 and
 \begin{equation*}
 \tu{\tau(j)}(y_{\tau(j)}) =
 \begin{cases}
     e_{1, r_j} e_{1, r_{j+1}}^\top, &\text{if}\ 1\leq j<j_1, \\
     \sum_{p=1}^{R_1} e_{p, r_j} e_{p, r_{j+1}}^\top, &\text{if}\  j_1<j<j_2, \\
     \sum_{p=1}^{R_2} e_{p, r_j} e_{p, r_{j+1}}^\top, &\text{if}\  j_2<j<j_3, \\
     e_{1, r_j} e_{1, r_{j+1}}^\top, &\text{if}\  j_3<j\leq d.
 \end{cases}
 \end{equation*}
 Then we can compute for $x_{i_1} = q_1$, $x_{i_2} = (p_2 - 1) R_2 +q_2$, $x_{i_3} = p_3$, where $1\leq q_1,p_2\leq R_1$, $1\leq q_2,p_3 \leq R_2$, that
 \begin{align*}
     & M_{i_2,(i_3,i_1)}(x_{i_2},(x_{i_3},x_{i_1})) \\
     = & M_{i_2,(i_3,i_1)}\left(x_{\tau(j_2)},(x_{\tau(j_3)},x_{\tau(j_1)})\right) \\
     = & \prod_{j=1}^{j_1-1} \tu{\tau(j)}(y_{\tau(j)}) \cdot \tu{\tau(j_1)}(y_{\tau(j_1)})\cdot\prod_{j=j_1+1}^{j_2-1} \tu{\tau(j)}(y_{\tau(j)}) \cdot \tu{\tau(j_2)}(y_{\tau(j_2)}) \\
     &\quad \cdot\prod_{j=j_2+1}^{j_3-1} \tu{\tau(j)}(y_{\tau(j)}) \cdot \tu{\tau(j_3)}(y_{\tau(j_3)})\cdot\prod_{j=j_3+1}^d \tu{\tau(j)}(y_{\tau(j)}) \\
     =&\prod_{j=1}^{j_1-1} e_{1, r_j} e_{1, r_{j+1}}^\top\cdot e_{1,r_{j_1}} e_{q_1,r_{j_1 + 1}}^\top\cdot \prod_{j_1+1}^{j_2-1} \sum_{p=1}^{R_1} e_{p, r_j} e_{p, r_{j+1}}^\top\cdot e_{p_2,r_{j_2}} e_{q_2,r_{j_2 + 1}}^\top\\
     &\quad \cdot \prod_{j=j_2+1}^{j_3-1} \sum_{p=1}^{R_2} e_{p, r_j} e_{p, r_{j+1}}^\top \cdot e_{p_3,r_{j_3}} e_{1,r_{j_3 + 1}}^\top\cdot \prod_{j=j_3+1}^{d} e_{1, r_j} e_{1, r_{j+1}}^\top\\
     =& \left(e_{q_1,r_{j_1 + 1}}^\top\cdot \sum_{p=1}^{R_1} e_{p, r_{j_1+1}} e_{p, r_{j_2}}^\top\cdot e_{p_2,r_{j_2}}\right) \cdot\left( e_{q_2,r_{j_2 + 1}}^\top\cdot \sum_{p=1}^{R_2} e_{p, r_{j_2+1}} e_{p, r_{j_3}}^\top \cdot e_{p_3,r_{j_3}}\right)\\
     =& \delta_{q_1,p_2}\delta_{q_2,p_3},
 \end{align*}
 which combined with the fact that $M_{i_2,(i_3,i_1)}(x_{i_2},(x_{i_3},x_{i_1}))=0$ as long as $x_{i_1} \leq R_1$, $x_{i_2}\leq R_1 R_2$, and $x_{i_3}\leq R_2$ do not hold simultaneously, yields that $\rk(M_{i_2,(i_3,i_1)})=R_1 R_2$. So the proof is completed.
 \end{proof}
 
 Then one can prove Theorem~\ref{thm:TT}.
 
 \begin{proof}[Proof of Theorem~\ref{thm:TT}]
 Since Assumption~\ref{asp:exist_R_TT} holds, for any $(i_1,i_2,i_3)$ of the correct order with respect to $\tau$, by Lemma~\ref{lem:rk_upbd_TT} and Lemma~\ref{lem:rk_lowbd_TT}, we have
 \begin{equation*}
     \rk(M_{i_2,(i_3,i_1)}) \geq R^2 >\max\left\{\rk(M_{i_1,(i_2,i_3)}), \rk(M_{i_3,(i_1,i_2)})\right\},
 \end{equation*}
 as long as $\tu{}$ is not in some measure-zero set of $\calU_{\vec{r},\vec{n},\tau}^d$. Thus, one can immediately conclude the correctness of Algorithm~\ref{alg: 3index}.
\end{proof}

\subsection{Proof of Theorem~\ref{thm:robust_TT} and Corollary~\ref{cor:robust_TT}} 
The proof is similar to that of Theorem~\ref{thm:robust_TR} and Corollary~\ref{cor:robust_TR}.

\begin{proof}[Proof of Theorem~\ref{thm:robust_TT}]
Consider three different indices $i_1,i_2,i_3\in\{1,2,\dots,d\}$ and assume that $(i_1,i_2,i_3)$ is of the correct order with respect to $\tau$. Denote $E$ as the noise matrix in observing $M_{i_1,(i_2,i_3)}$, $M_{i_2,(i_3,i_1)}$, or $M_{i_3,(i_1,i_2)}$. With size at most $n_{\max}\times n_{\max}^2$, the matrix $E$ by Theorem~\ref{thm:large_sing_value_gauss} satisfies that
\begin{equation*}
    \mathbb{P}\left(\sigma_{\max}(E)\geq \frac{\sigma}{2}\right)\leq \mathbb{P}\left(\sigma_{\max}(E) > (n_{\max} + \sqrt{n_{\max}} + t) \sigma_e\right)\leq 2 e^{-t^2/2},
\end{equation*}
where $t = \frac{T}{2}- n_{\max} - \sqrt{n_{\max}}$. According to Theorem~\ref{thm:weyl_ineq} (Weyl's inequality) and Assumption~\ref{assump:uniform_SingValue_TT}, the probability that Algorithm~\ref{alg: 3index} obtains $\sigma_{R^2}(M_{i_1,(i_2,i_3)})<\sigma/2$, $\sigma_{R^2}(M_{i_2,(i_1,i_3)})>\sigma/2$, and $\sigma_{R^2}(M_{i_3,(i_1,i_2)})<\sigma/2$ is at least
\begin{equation*}
    1- 6\cdot \exp\left(-\frac{t^2}{2}\right) = 1 - 6\cdot \exp\left(-\frac{1}{8}(T-2 n_{\max} - 2\sqrt{n_{\max}})^2\right). 
\end{equation*}
The proof is hence completed.
\end{proof}

\begin{proof}[Proof of Corollary~\ref{cor:robust_TT}]
Noticing that Algorithm~\ref{alg: 3index} is called for $O(d\log d)$ times when implementing Algorithm~\ref{alg: whole_train}, one can therefore conclude Corollary~\ref{cor:robust_TT} from Theorem~\ref{thm:robust_TT}.
\end{proof}

\end{document}